\documentclass[11pt]{article}

\usepackage[margin=1in]{geometry}
\usepackage{times}
\usepackage{microtype}

\usepackage{amsmath,amssymb,amsthm,amsfonts,mathtools}

\usepackage{graphicx}
\usepackage{xcolor}
\usepackage{booktabs}
\usepackage{enumitem}
\usepackage{url}

\usepackage[round,authoryear]{natbib}

\usepackage{algorithm}
\usepackage{algorithmic}
\usepackage{float}
\floatstyle{ruled}
\restylefloat{algorithm}

\usepackage[
  colorlinks=true,
  citecolor=blue,
  linkcolor=blue,
  urlcolor=blue
]{hyperref}
\usepackage{aliascnt}
\usepackage[capitalize,noabbrev]{cleveref}

\newenvironment{keywords}{\par\smallskip\noindent\textbf{Keywords:} }{\par\smallskip}

\DeclareMathOperator*{\argmin}{arg\,min}
\DeclareMathOperator*{\argmax}{arg\,max}
\DeclareMathOperator{\spanop}{span}
\DeclareMathOperator{\relint}{relint}
\DeclareMathOperator{\dir}{dir}
\DeclareMathOperator{\cone}{cone}

\DeclareMathOperator{\conv}{conv}

\DeclareMathOperator{\rank}{rank}
\DeclareMathOperator{\cl}{cl}

\newcommand{\RR}{\mathbb{R}}
\newcommand{\PP}{\mathbb{P}}
\newcommand{\EE}{\mathbb{E}}
\newcommand{\1}{\mathbf{1}}
\newcommand{\X}{\mathcal{X}}
\newcommand{\C}{\mathcal{C}}

\newcommand{\Dset}{\mathcal{D}}

\newcommand{\Wstar}{W_\star}
\newcommand{\dstar}{d^\star}
\newcommand{\disc}{\mathrm{disc}}

\newcommand{\dist}{\mathrm{dist}}

\theoremstyle{plain}
\newtheorem{theorem}{Theorem}

\makeatletter
\newcommand{\redeclaretheorem}[3]{% #1 envname, #2 printed singular, #3 printed plural
  \@ifundefined{#1}{}{%
    \expandafter\let\csname #1\endcsname\relax
    \expandafter\let\csname end#1\endcsname\relax
  }%
  \newaliascnt{#1}{theorem}%
  \newtheorem{#1}[#1]{#2}%
  \aliascntresetthe{#1}%
  \crefname{#1}{#2}{#3}%
  \Crefname{#1}{#2}{#3}%
}
\makeatother

\redeclaretheorem{definition}{Definition}{Definitions}
\redeclaretheorem{lemma}{Lemma}{Lemmas}
\redeclaretheorem{proposition}{Proposition}{Propositions}
\redeclaretheorem{corollary}{Corollary}{Corollaries}
\redeclaretheorem{claim}{Claim}{Claims}
\redeclaretheorem{example}{Example}{Examples}
\redeclaretheorem{assumption}{Assumption}{Assumptions}
\redeclaretheorem{property}{Property}{Properties}
\redeclaretheorem{remark}{Remark}{Remarks}

\title{Learning Decision-Sufficient Representations for Linear Optimization}

\author{%
  Yuhan Ye\\
  MIT\\
  \texttt{yyh03@mit.edu}
  \and
  Saurabh Amin\\
  MIT\\
  \texttt{amins@mit.edu}
  \and
  Asuman \"{O}zda\u{g}lar\\
  MIT\\
  \texttt{asuman@mit.edu}
}

\date{}

\begin{document}
\maketitle

\begin{abstract}
We study how to construct compressed datasets that suffice to recover optimal decisions in linear programs with an unknown cost vector $c$ lying in a prior set $\mathcal{C}$. Recent work by \citet{bennouna2025whatdata} gives an exact information-theoretic characterization of sufficient decision datasets (SDDs) via an intrinsic decision-relevant dimension $d^\star$, but it leaves open whether such measurements can be computed, verified, or learned efficiently. In this paper, we establish hardness results showing that computing $d^\star$ is NP-hard and deciding whether a dataset is globally sufficient is coNP-hard, thereby resolving an open problem posed in \cite{bennouna2026datainformativenesslinearoptimization}.

To address this worst-case intractability, we introduce \emph{pointwise sufficiency}, a relaxation that requires sufficiency for an individual cost vector. Under nondegeneracy, we provide a polynomial-time cutting-plane algorithm for constructing pointwise-sufficient decision datasets. In a data-driven regime with i.i.d.\ costs, we further propose a cumulative algorithm that aggregates decision-relevant directions across samples, yielding a stable compression scheme of size at most $d^\star$. This leads to a distribution-free PAC guarantee: with high probability over the training sample, the pointwise sufficiency failure probability on a fresh draw is at most $\tilde{O}(d^\star/n)$, and this rate is tight up to logarithmic factors.

Finally, we apply decision-sufficient representations to contextual linear optimization, obtaining compressed predictors with generalization bounds scaling as $\tilde{O}(\sqrt{d^\star/n})$ rather than $\tilde{O}(\sqrt{d/n})$, where $d$ is the ambient cost dimension.
\end{abstract}

\begin{keywords}
Linear programming, Sample compression, PAC learning, Computational complexity, Decision-focused learning, Contextual optimization, Polyhedral geometry
\end{keywords}

\section{Introduction}
In many real-world decision problems, the optimization objective depends on parameters that are not directly observable and must be inferred from data. With the surge in available data, it has become common to use empirical evidence alongside contextual knowledge to support decision-making. Recent work of \citet{bennouna2025whatdata} settles the information-theoretic side of this question by characterizing exactly when a dataset is sufficient. Our focus is the next layer: the computational and statistical problem of finding or learning such datasets efficiently. Motivated by this, the paper studies the following question:

\smallskip
\noindent\emph{Which objective measurements actually matter for optimal decisions, and can such measurements be computed or learned with provable guarantees in polynomial time?}
\smallskip

We formulate the decision-making problem as a linear optimization with an unknown cost vector and a known feasible region.
The decision-maker solves
\begin{equation}
    \label{eq:lp}
    \min_{x\in\X}\; c^\top x,
\end{equation}
where $\X := \{x\in\RR^d : Ax=b,\ x\ge 0\}$ is a nonempty bounded polytope for some $A\in\RR^{m\times d}$ with full row rank
and $b\in\RR^m$.
The objective vector $c\in\RR^d$ is unknown, but it is known a priori to lie in an uncertainty set $\C\subset\RR^d$. Rather than observing $c$ directly, the decision-maker deploys a fixed dataset (measurement set) $\Dset=\{q_1,\dots,q_l\}\subset\RR^d$ chosen in advance and observes the corresponding inner products
$s(c;\Dset) := (q_1^\top c,\dots,q_l^\top c)\in\RR^l$, as linear measurements of the objective. Given observations $s\in\RR^l$, we restrict plausible cost vectors to the \emph{fiber}
$\C(\Dset,s) := \{c'\in\C : q_i^\top c' = s_i,\ i=1,\dots,l\}$.

Acquiring objective information is often the bottleneck: observing or predicting the full $d$-dimensional vector $c$ can be unnecessary when optimal decisions only depend on a few \emph{decision-relevant} directions. A central geometric message is that decision-making depends on $c$ only through a low-dimensional set of decision-relevant directions. In particular, for open convex $\C$, \citet{bennouna2025whatdata} provide an exact geometric characterization: a dataset is globally sufficient on $\C$ if and only if its span contains the decision-relevant subspace $\Wstar$ (formally introduced in \Cref{def:dir-xstar}), whose intrinsic dimension $d^\star$ is often much smaller than the ambient dimension $d$. However, turning this characterization into an efficient procedure appears difficult; their Algorithm 2 constructs a minimum-size global sufficient dataset by solving a mixed-integer program at each iteration. In a recent work \citep{bennouna2026datainformativenesslinearoptimization}, whether a basis of $\Wstar$ can be constructed by a polynomial-time algorithm is stated as an open problem.

In Section~\ref{sec:hardness}, we show that computing $d^\star$ is $\mathsf{NP}$-hard (Theorem~\ref{thm:np-hard-dir-dim}) and that even the weakest global sufficiency test---deciding whether the empty dataset $\Dset=\varnothing$ is globally sufficient---is $\mathsf{coNP}$-hard already when $\C$ is an open polyhedron specified in $H$-representation\footnote{An \emph{$H$-polyhedron} is specified by finitely many linear inequalities, e.g., $P=\{x\in\RR^d:Hx\le h\}$. A \emph{polytope} is a bounded polyhedron; we use \emph{$H$-polytope} when emphasizing an inequality representation of a polytope. An \emph{open polyhedron} is obtained by replacing non-strict inequalities by strict ones.} (Theorem~\ref{thm:conp-hard-global0}).
Under $\mathsf{P}\neq\mathsf{NP}$, this gives a negative answer to the open problem in \cite{bennouna2026datainformativenesslinearoptimization}:
it is computationally hard in general to construct a basis of $\Wstar$ and hence a minimum-size global sufficient dataset.
The hardness results establish a computational barrier for worst-case global sufficiency but leave open two important questions: (i) Can we efficiently construct sufficient datasets for individual cost instances? (ii) In a data-driven regime with typical costs, can we learn datasets with good average-case guarantees? 

We answer both affirmatively, demonstrating a computational-statistical gap between worst-case complexity and average-case learnability. Following the data-driven algorithm design paradigm \citep{gupta2017pac,gupta2020datadrivenalgdesign,balcan2020datadrivenalgdesignchapter}, we relax from \emph{global (uniform)} sufficiency over all $c\in\C$ to a \emph{distributional} notion: we assume costs are drawn i.i.d.\ from an unknown distribution $P_c$ supported on $\C$, and aim to learn a dataset $\hat \Dset$  that is sufficient for a fresh draw $c\sim P_c$ with high probability. We address this via two intermediate steps: (i) a tractable \emph{pointwise relaxation} for individual instances (answering question (i) above), and (ii) a cumulative learning algorithm with PAC guarantees (answering question (ii)). This leads to a sample-complexity question:

\smallskip
{\centering
\emph{How many training instances are needed to learn such a $\hat \Dset$ in polynomial time?}\par}
\smallskip

To obtain a polynomial-time learning algorithm at the level of realized cost vector samples, we first introduce
\emph{pointwise sufficiency}. Intuitively, for a realized cost vector $c$, we only ask that every cost vector still compatible with the observed measurements of $c$ continue to support a common optimal decision. Equivalently, the remaining fiber of plausible costs should stay inside a single optimality region. Formally, a dataset $\Dset$ is pointwise sufficient at $c$ if every
$c'\in\C(\Dset,s(c;\Dset))$ induces the same set of optimal decisions.
This relaxation is tractable because it only requires the fiber to lie within one optimality cone rather than distinguishing decisions across all $c$ in $\C$.
Under a standard nondegeneracy assumption on $\X$, checking pointwise sufficiency reduces to a geometric containment test that solves at most $d-m$ instances of the face-intersection (FI) subproblem for a fixed optimal basis. When $\C$ is a polytope given in explicit $H$-representation or an ellipsoid, each FI call can be solved in polynomial time, yielding an overall polynomial-time routine; see Property~\ref{prop:polytime}.

In Section~\ref{sec:cuttingplanes}, we develop a sequential cutting-plane algorithm (Algorithm~\ref{alg:pointwise-cp}) that discovers pointwise-sufficient datasets offline. The key algorithmic innovation is our facet-hit cutting-plane rule: when the containment test fails, we identify the first facet of the optimality cone encountered along the segment joining an interior point to the witness of failure. This ensures the queried facet normal is genuinely decision-relevant. A naive approach querying arbitrary violated constraints can fail, as we demonstrate via a counterexample in Example~\ref{ex:why-facet-hit}. Each query produces a direction linearly independent of previous ones, and the procedure terminates after at most $\dstar$ queries.

In Section~\ref{sec:ddr-compression}, in a data-driven regime with i.i.d. costs, we run the pointwise routine cumulatively with warm starts (Algorithm~\ref{alg:cumulative}) and expand the measurement set only on a small subset of ``hard'' instances. This yields a realizable stable compression scheme~\citep{hanneke2021stablecompression} with compression size at most $\dstar$.
Consequently, with probability at least $1-\delta$ over $n$ training samples, the pointwise-sufficiency failure probability on a fresh draw is at most $\widetilde{O}(d^\star/n)$ (Theorem~\ref{thm:certificate}).
 This fast rate exploits realizability: our algorithm achieves zero empirical sufficiency loss via the compression framework, contrasting with data-driven projection approaches \citep{balcan2024howmuchdata,sakaue2024datadrivenprojectionslp} that control objective value gaps and obtain characteristic $\widetilde{O}(1/\sqrt{n})$ rates via uniform convergence. The frameworks are complementary: ours certifies when projections preserve decisions; theirs bound how close projected values are to optimal.

In practice, many decision systems repeatedly solve linear programs over a fixed feasible polytope $\X$ while the cost vector $c\in\C$ varies.
Examples include (i)~\emph{repeated LP} with time-varying costs (routing, resource allocation),
(ii)~\emph{contextual linear optimization}, where one learns a predictor for $c$ and plugs it into the LP,
and (iii)~preference-based decision making, where feedback is limited to comparisons or other aggregate signals.
This motivates the two-stage pipeline below:

\begin{itemize}[leftmargin=1.2em,itemsep=0pt,topsep=2pt]
    \item \textbf{Stage I (Representation Discovery).}
    Learn $\Wstar$ from i.i.d.\ training instances via sequential offline linear queries, together with a
    distribution-free certificate on the probability of sufficiency failure.
    \item \textbf{Stage II (Task-Specific Deployment).}
    Use the learned subspace for dimension reduction in repeated LPs, contextual LPs, and preference-based decision making.
\end{itemize}

In Section~\ref{sec:clo_compression}, as an illustration, we integrate this framework into SPO$+$ training for contextual linear optimization by restricting to the discovered decision-relevant subspace. This reduces the dimension parameter appearing in the generalization bound (see \Cref{thm:misspec_plus_gen}). We summarize our main contributions as follows.

\begin{itemize}[leftmargin=1.2em,itemsep=0pt,topsep=2pt]
\item \textbf{Hardness results.}
Building on the SDD geometry of \citet{bennouna2025whatdata}, we show that computing the intrinsic decision-relevant
dimension $\dstar$ is $\mathsf{NP}$-hard (Theorem~\ref{thm:np-hard-dir-dim}).
We also prove that deciding whether the empty dataset $\Dset=\varnothing$ is globally sufficient (in the decision sense) is
$\mathsf{coNP}$-hard (Theorem~\ref{thm:conp-hard-global0}); for open convex $\C$, this yields that constructing a
minimum-size global SDD is $\mathsf{NP}$- and $\mathsf{coNP}$-hard (Corollary~\ref{cor:final-hardness}).
Finally, verifying pointwise sufficiency is $\mathsf{coNP}$-hard in general (Theorem~\ref{thm:coNP-hard-pointwise-check}).

\item \textbf{Per-instance level: finding pointwise SDDs in polynomial time.}
Under nondegeneracy, we design a facet-hit cutting-plane algorithm
(Algorithm~\ref{alg:pointwise-cp}) that sequentially queries decision-relevant directions. The facet-hit rule ensures each query is linearly independent and lies in $\Wstar$, returning a pointwise-sufficient dataset
(Theorem~\ref{thm:alg1-correct}) in polynomial time.

\item \textbf{Distributional level: learning decision-sufficient representations with fast rates.}
Warm-starting the pointwise routine over i.i.d.\ costs (Algorithm~\ref{alg:cumulative}) yields a realizable stable compression scheme~\citep{hanneke2021stablecompression} with compression size at most $\dstar$. This leads to a distribution-free PAC certificate scaling as $\widetilde{O}(d^\star/n)$ (Theorem~\ref{thm:certificate}), matching our lower bound  (Theorem~\ref{thm:lowerbound}). The fast rate exploits zero empirical loss and linear independence of decision-relevant queries.

\item \textbf{Application to contextual linear optimization.}
We integrate decision-sufficient representation into predictor training for contextual linear optimization, yielding a compressed prediction model with improved generalization guarantees. In particular, the dimension term in the bound is reduced from $d$ to $d^\star$ (\Cref{thm:misspec_plus_gen}).
\end{itemize}

\section{Related literature}
\label{sec:lit_rev}
\textbf{Dimension reduction and model compression for LP.}
Dimension reduction is a classical tool for coping with high-dimensional optimization and learning.
Random projections and sketching preserve geometry with high probability (e.g., Johnson--Lindenstrauss \citep{johnson1984extensions}; see \citep{woodruff2014sketching})
and are ubiquitous in numerical linear algebra as well as learning-theoretic analyses \citep{bartlett2022gendatanla}.
For linear programs, random projections can reduce problem size while approximately preserving feasibility and objective values
\citep{vu2018randomlp,poirion2023randomprojections}, and \citet{sakaue2024datadrivenprojectionslp} propose data-driven projections with generalization guarantees.
In contrast, we seek exact optimizer recovery over a known polytope $\X$ by identifying the directions that can change the optimal solution; we quantify the intrinsic decision dimension by $d^\star$.
Our work is most closely related to \citet{bennouna2025whatdata,bennouna2026datainformativenesslinearoptimization}, which characterizes global sufficient decision datasets for LP under convex open priors and gives an iterative construction with a mixed-integer program at each step.

\textbf{Data-driven algorithm design.}
Beyond worst-case analysis argues that worst-case complexity can be overly pessimistic and instead advocates structured or distributional models of ``relevant'' instances \citep{roughgarden2019beyondworstcase,roughgarden2020bwca}.
Representative examples include perturbation-resilient instances \citep{bilu2012stableinstances}, smoothed analysis \citep{spielman2004smoothedanalysis}, and planted or semi-random models \citep{blum1995coloringsemirandom}.
Data-driven algorithm design is a principled ML instantiation of this viewpoint, learning an algorithmic object (e.g., an algorithm family or configuration) from i.i.d.\ samples while retaining provable performance guarantees
\citep{gupta2020datadrivenalgdesign,balcan2020datadrivenalgdesignchapter,gupta2017pac,balcan2017learningtheoreticfoundations,balcan2024howmuchdata,balcan2024learningtobranch}.
Under bounded loss, typical results yield uniform-convergence generalization gaps on the order of $\tilde{O}(\sqrt{\mathrm{Pdim}(\mathcal A)/n})$, and sharper bounds can be obtained via refined complexity notions such as dispersion and ``knife-edge'' structure \citep{balcan2018dispersion,balcan2020refinedbounds}.
Our setting fits this paradigm by treating the queried directions as the learned object.
For our specific problem, we exploit LP geometry to derive a stable compression scheme, yielding fast-rate certificates scaling as $\widetilde{O}(d^\star/n)$.
At a technical level, our geometry-aware cutting-plane viewpoint is also reminiscent of oracle-based frameworks that access constraints through separation \citep{mhammedi25a}.

\textbf{Compression-based generalization.}
Compression-based analyses provide distribution-free generalization and scenario-type certificates, often yielding fast $k/n$-type rates when the learned object admits a small compression of size $k$
\citep{valiant1984learnable,littlestone1986relating,floyd1995samplecompression,moran2016samplecompression,graepel2005pacbayes}.
In the realizable regime, stable compression can further sharpen logarithmic factors \citep{bousquet2020properlearning,hanneke2021stablecompression}.
In fully agnostic settings, however, $1/n$ rates are impossible in general: sharp lower bounds show worst-case rates of order $\Theta\!\bigl(\sqrt{k\log(n/k)/n}\bigr)$ \citep{hanneke19b}.
Our cumulative algorithm in Section~\ref{sec:ddr-compression} fits naturally into this view: each newly queried direction is triggered by a ``hard'' sample revealing a genuinely new decision-relevant facet, producing a compression set of size at most $d^\star$ and a fast-rate certificate.

\textbf{Polyhedral containment.}
Our hardness results rely on classical complexity phenomena in polyhedral computation.
In particular, deciding containment of an $H$-polytope in a $V$-polytope is $\mathsf{coNP}$-complete \citep{freundorlin1985complexity,gritzmannklee1993radii}.
Sum-of-squares certificates provide powerful relaxations for containment questions and yield refined results for structured instances \citep{kellner2016sos}.
Since pointwise sufficiency can be phrased as a containment statement that a data-consistent slice lies within an optimality region, these results naturally connect to the verification problem studied in our paper.

\textbf{Contextual optimization.}
Our paper is also motivated by decision-focused (predict-then-optimize) learning, where one learns predictive models to support downstream optimization \citep{elmachtoubgrigas2022spo,elbalghiti2023generalization}; see also the survey \citep{sadana2025survey}.
Beyond the batch setting, recent work studies online and active learning variants of contextual linear optimization, including margin-based active learning \citep{liu2025activeclo} and online contextual decision-making with SPO-type surrogates \citep{liu2022onlinep2o}; for background on active learning, see \citet{dasgupta2011twofaces,hanneke2014active}.
Bandit/partial-feedback formulations are studied in \citet{hu2024clobandit}, and related safe-exploration objectives appear in safe linear bandits over unknown polytopes \citep{gangrade24a}.
On the statistical side, \citet{elbalghiti2023generalization} derive uniform generalization bounds for the SPO loss via the Natarajan dimension of the induced decision class; in the polyhedral case, their bounds depend only logarithmically on the number of extreme points, yielding rates on the order of $\tilde{O}\!\left(\sqrt{pd\,\log(n|\X^\angle|)/n}\right)$ for linear predictors.
Recent work addresses model misspecification in contextual optimization \citep{bennouna2025misspecification} and benign generalization behavior in stochastic linear optimization under quadratically bounded losses \citep{telgarsky22a}; see also \citet{schutte2025sufficientproxies} on sufficient proxy representations.

\textbf{Active learning and adaptive measurement.}
Active learning studies how to adaptively acquire information---labels, features, or more general tests---to reduce query cost relative to passive sampling.
A central idea is to query only where candidate hypotheses disagree, as formalized by disagreement-based active learning \citep{hanneke2014active}; see also \citet{dasgupta2011twofaces}.
Classical work distinguishes sample complexity from label (query) complexity in agnostic active learning \citep{balcan2006agnostic,balcan2009agnostic}.
Our per-instance model is closer in spirit to arbitrary-query and experimental-design views of active learning \citep{kulkarni1993active,cohn1996active}, since we design linear measurements $q^\top c$ of a possibly unknown $c$.
Frameworks accounting for heterogeneous query costs are also related to our measurement budget \citep{guillory2009average,guillory2010interactive}.
At the intersection with decision-focused learning, \citet{liu2025activeclo} studies margin-based active learning for contextual linear optimization.
In contrast, we leverage polyhedral LP geometry: our facet-hit rule queries a violated optimality-cone facet normal, guaranteeing exact decision identification after at most $d^\star$ measurements and enabling a stable-compression view across i.i.d.\ instances.

\section{Preliminaries: Characterizing sufficient datasets via LP geometry}
\label{sec:prelim}
Recall that the fundamental question we address is: \emph{which fixed measurement sets $\Dset$ are sufficient to solve the LP}? In other words, we study a measurement-design problem: when do the measurements produced by a single dataset $\Dset$, together with the prior restriction $c\in\C$, already determine an optimal decision? Following \citet{bennouna2025whatdata}, we adopt a \emph{global} notion of informativeness in which one fixed dataset $\Dset$ must work uniformly for all
$c\in\C$. We now formalize this notion.

\begin{definition}[Global sufficient decision dataset]
\label{def:bennouna-sdd}
A dataset $\Dset := \{q_1,\dots,q_l\}\subseteq \RR^d$ is a \textbf{sufficient decision dataset} (SDD) for
the uncertainty set $\C\subseteq \RR^d$ and decision set $\X$ if there exists a decision rule
$\widehat{X}:\RR^l\to \mathcal{P}(\X)$, where $\mathcal{P}(\X)$ denotes the collection of subsets of $\X$, such that
\[
\forall\, c\in\C,\qquad
\widehat{X}\bigl(c^\top q_1,\dots,c^\top q_l\bigr)
\;=\;
\argmin_{x\in\X} c^\top x.
\]
\end{definition}

To characterize sufficient datasets, we recall a few notions from LP geometry. Let $\X^\angle$ denote the set of \emph{extreme points} of $\X$.
For $x^\star\in\X^\angle$, define the feasible direction cone
$FD(x^\star):=\{\delta\in\RR^d:\exists\varepsilon>0,\ x^\star+\varepsilon\delta\in\X\}$
and define the \emph{optimality cone}
$\Lambda(x^\star):=\{c\in\RR^d:\ x^\star\in\argmin_{x\in\X} c^\top x\}$.

Let $D(x^\star)$ be the set of \emph{extreme directions} of the polyhedral cone $FD(x^\star)$. For $\delta\in D(x^\star)$, define the corresponding boundary face $F(x^\star,\delta):=\Lambda(x^\star)\cap \{\delta\}^\perp=\{c\in\Lambda(x^\star):\ c^\top\delta=0\}$, and the set of \emph{relevant extreme directions}
\begin{equation}
\label{def:Delta}
\Delta(\X,\C)
:=\Bigl\{\delta\in\RR^d:\ \exists x^\star\in\X^\angle,\ \delta\in D(x^\star),\ \text{and}\ F(x^\star,\delta)\cap \C\neq\emptyset\Bigr\}.
\end{equation}
Equivalently, $\delta$ is relevant if the corresponding boundary face between optimality regions is attainable by some cost in $\C$. 
For any cost vector $c$, let $\X^\star(c):=\argmin_{x\in\X} c^\top x$ denote the (possibly set-valued) set of optimal solutions.
For a set $\C\subseteq\RR^d$, define
\begin{equation}
\label{def:dir-xstar}
\begin{aligned}
& \X^\star(\C):=\bigcup_{c\in\C}\bigl(\X^\star(c)\cap \X^\angle\bigr),
&\dir(\X^\star(\C)):=\spanop\{x-x':\ x,x'\in\X^\star(\C)\}.
\end{aligned}
\end{equation}

We refer to $\Wstar:=\dir(\X^\star(\C))$ as the \emph{decision-relevant subspace}, and set $\dstar:=\dim(\Wstar)$.
The following two results are due to \citet{bennouna2025whatdata}. The first theorem states that global sufficiency is
equivalent to spanning all directions along which optimality can change, as formalized by $\Delta(\X,\C)$.

\begin{theorem}[\citet{bennouna2025whatdata}, Theorem 1]
\label{thm:bennouna-delta}
Let $\C$ be open and convex. A dataset $\Dset$ is an SDD for $(\X,\C)$
if and only if
\[
\Delta(\X,\C)\subseteq \spanop(\Dset).
\]
\end{theorem}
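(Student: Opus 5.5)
We prove the two directions separately. The key reformulation is that $\Dset$ is an SDD if and only if any two costs $c,c'\in\C$ with equal measurements, i.e.\ $c-c'\in\spanop(\Dset)^\perp$, satisfy $\X^\star(c)=\X^\star(c')$. For sufficiency of this condition, set $\widehat X(s)$ to be the common optimal set on the fiber $\C(\Dset,s)$. For necessity, a valid decision rule forces this equality.

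\emph{($\Leftarrow$).} Assume $\Delta(\X,\C)\subseteq\spanop(\Dset)$. Take $c,c'\in\C$ with $c-c'\perp\spanop(\Dset)$. By convexity the segment $c_t=c+t(c'-c)$, $t\in[0,1]$, lies in $\C$. The map $t\mapsto\X^\star(c_t)$ is piecewise constant: it can change only at finitely many breakpoints, where $c_t$ enters the boundary face $F(x^\star,\delta)$ of some optimality cone. We will show that no such change occurs.

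Suppose $c_t\in F(x^\star,\delta)$ with $x^\star$ optimal at $c_t$ and $\delta\in D(x^\star)$. Since $c_t\in\C$, this face meets $\C$, so $\delta\in\Delta(\X,\C)\subseteq\spanop(\Dset)$. Hence $(c'-c)^\top\delta=0$, and $c_s^\top\delta=0$ for all $s$. Each edge direction $\delta$ at $x^\star$ therefore keeps constant sign (zero or positive) along the whole segment. Near $t$, optimality of $x^\star$ is preserved, because $c_s^\top\delta\ge0$ for all $\delta\in D(x^\star)$ characterizes $\Lambda(x^\star)$.

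The remaining step is to argue that the entire set of optimal vertices is invariant. The optimal face at $c_s$ is determined by which edges have $c_s^\top\delta=0$ versus $>0$. An edge with $c_t^\top\delta>0$ stays strictly positive in a neighborhood of $t$, and a tight edge stays tight throughout, since it lies in $\Delta\subseteq\spanop(\Dset)$. Consequently the set of optimal vertices is locally constant in $s$, and hence constant on $[0,1]$ by connectedness. Propagating across adjacent vertices of the optimal face takes some care. We apply the same argument at each optimal vertex $x$ of $\X^\star(c_t)$, which is legitimate because every relevant edge there also lies in $\Delta$. I expect this bookkeeping, namely ensuring that all vertices of the optimal face remain optimal and no new ones appear, to be the main technical step.

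\emph{($\Rightarrow$).} Suppose $\delta\in\Delta(\X,\C)\setminus\spanop(\Dset)$, witnessed by $x^\star$ with $\delta\in D(x^\star)$ and some $c_0\in F(x^\star,\delta)\cap\C$. Because $\delta\notin\spanop(\Dset)$, there is $u\in\spanop(\Dset)^\perp$ with $u^\top\delta\ne0$; take $u^\top\delta<0$. Since $\C$ is open, $c_0\pm\epsilon u\in\C$ for small $\epsilon>0$, and these two costs produce identical measurements.

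At $c_0-\epsilon u$ we have $(c_0-\epsilon u)^\top\delta>0$. At $c_0+\epsilon u$ we have $(c_0+\epsilon u)^\top\delta<0$, so moving from $x^\star$ along the edge $\delta$ decreases the cost and $x^\star$ is not optimal. It remains to ensure that $x^\star$ is optimal at $c_0-\epsilon u$. To arrange this, first perturb $c_0$ within the relative interior of the face $F(x^\star,\delta)$, intersected with the open set $\C$, so that every other edge $\delta'\in D(x^\star)$ satisfies $c_0^\top\delta'>0$. Such a perturbation exists because $\delta$ is an extreme direction, so $F(x^\star,\delta)$ is a facet of $\Lambda(x^\star)$ and this choice is generic. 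Then for small $\epsilon$ every edge at $x^\star$ is strictly positive at $c_0-\epsilon u$, so $x^\star$ is optimal there. The two costs share measurements but have different optimal sets, contradicting sufficiency.
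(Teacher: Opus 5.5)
The paper does not prove this statement; it imports it from \citet{bennouna2025whatdata}. Its only comment is that the necessity direction is where openness of $\C$ is needed. Your argument is correct and self-contained, and it matches that split. Your ($\Leftarrow$) direction uses only convexity: the segment $[c,c']$ stays in $\C$, so any edge tight at a point of the segment is certified to lie in $\Delta(\X,\C)$. Your ($\Rightarrow$) direction uses openness twice: once to move the witness $c_0$ into $\relint F(x^\star,\delta)$, and once to keep $c_0\pm\epsilon u$ in $\C$. The facet claim you invoke is standard polyhedral polarity between the pointed cone $FD(x^\star)$ and $\Lambda(x^\star)$; the latter has lineality space equal to the orthogonal complement of the direction space of $\mathrm{aff}\,\X$. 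Consequently, points of $\relint F(x^\star,\delta)$ are strictly positive on every extreme direction not parallel to $\delta$, which is exactly what makes $x^\star$ optimal at $c_0-\epsilon u$.

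Two small cleanups in the ($\Leftarrow$) direction. First, your sentence that each edge at $x^\star$ keeps its sign along the whole segment holds only for tight edges. Slack edges are controlled only near $t$, and that local control is all you actually use.

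Second, the step you flag as the main technical difficulty is immediate. Fix $t_0$. Every vertex optimal at $c_{t_0}$ has its tight edges tight for all $s$ and its slack edges slack near $t_0$, so it stays optimal near $t_0$. Every vertex $y$ with $c_{t_0}^\top y$ strictly above the optimal value stays strictly suboptimal near $t_0$ by continuity, since there are finitely many vertices. Hence the optimal vertex set is locally constant, so it is constant on $[0,1]$ by connectedness, and $\X^\star(c)=\X^\star(c')$ because both are convex hulls of the same vertices. No propagation across adjacent vertices is needed, and the finitely-many-breakpoints preamble can be dropped.

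Alternatively, your own remark that the optimal face equals $\X\cap\bigl(x+\cone\{\delta\in D(x): c_s^\top\delta=0\}\bigr)$ gives local constancy from a single optimal vertex.
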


While $\Delta(\X,\C)$ is defined via faces of optimality cones, the next theorem gives an equivalent characterization
directly in decision space: the span of these relevant directions matches the span of \emph{differences of reachable optima}.

\begin{theorem}[\citet{bennouna2025whatdata}, Theorem 2]
\label{thm:bennouna-spanDelta}
For any convex set $\C\subset\RR^d$,
\[
\spanop\Delta(\X,\C)=\dir(\X^\star(\C)).
\]
\end{theorem}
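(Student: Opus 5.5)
We prove $\spanop\Delta(\X,\C)=\dir(\X^\star(\C))$ by establishing the two inclusions separately. The tools are the edge structure of the polytope $\X$ and the convexity of $\C$.

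For $\spanop\Delta(\X,\C)\subseteq\dir(\X^\star(\C))$, take $\delta\in\Delta(\X,\C)$, with witnesses $x^\star\in\X^\angle$, $\delta\in D(x^\star)$ and $c\in F(x^\star,\delta)\cap\C$.
\begin{itemize}[leftmargin=1.2em,itemsep=0pt,topsep=2pt]
\item Since $\X$ is a bounded polytope, each extreme direction of the cone $FD(x^\star)$ is parallel to an edge of $\X$ at $x^\star$. So there is an adjacent vertex $y=x^\star+t\delta$ with $t>0$.
\item Because $c\in\Lambda(x^\star)$, the vertex $x^\star$ is optimal for $c$. Because $c^\top\delta=0$, we get $c^\top y=c^\top x^\star$, so $y$ is also an optimal vertex for $c$.
\item Hence both $x^\star$ and $y$ lie in $\X^\star(\C)$, and $\delta=(y-x^\star)/t\in\dir(\X^\star(\C))$.
\end{itemize}
Taking spans gives the first inclusion. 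This direction does not use convexity of $\C$.

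For the reverse inclusion, it suffices to show that $x-x'\in\spanop\Delta$ for any two reachable optimal vertices $x\in\X^\star(c)$ and $x'\in\X^\star(c')$ with $c,c'\in\C$. Consider the segment $c_\lambda=(1-\lambda)c+\lambda c'$, which stays in $\C$ by convexity.
\begin{itemize}[leftmargin=1.2em,itemsep=0pt,topsep=2pt]
\item \emph{Same cost.} First treat the case where $x$ and $x'$ are both optimal for one $c\in\C$. The optimal face $\X^\star(c)$ is itself a polytope, and its edge graph is connected. So $x$ and $x'$ are joined by a path of edges of $\X$ lying in $\X^\star(c)$.
\item \emph{Edge steps are relevant.} Each step along such a path goes from a vertex $v$ to a vertex $w$ with $w-v$ a positive multiple of an extreme direction $\delta\in D(v)$. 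Since $c$ is optimal at both endpoints, $c^\top\delta=0$ and $c\in\Lambda(v)$. Thus $c\in F(v,\delta)\cap\C$, so $\delta\in\Delta$ and $w-v\in\spanop\Delta$.
\item \emph{Different costs.} For general $c,c'$, the segment $\{c_\lambda:\lambda\in[0,1]\}$ crosses finitely many normal cones of $\X$, because the normal fan is finite and polyhedral. Choose breakpoints $0=\lambda_0<\lambda_1<\dots<\lambda_k=1$ such that on each closed piece some vertex $v_j$ stays optimal; closedness of the normal cones makes this possible.
\item \emph{Chaining.} At each breakpoint $\lambda_j$, both $v_{j-1}$ and $v_j$ are optimal for the single cost $c_{\lambda_j}\in\C$. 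The same-cost case then gives $v_j-v_{j-1}\in\spanop\Delta$. Also $x$ and $v_0$ are both optimal for $c$, and $x'$ and $v_k$ are both optimal for $c'$, so the same-cost case covers the two ends. Telescoping expresses $x-x'$ as a sum of elements of $\spanop\Delta$.
\end{itemize}

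The main obstacle is the edge-level claim: showing that an edge of $\X$ from a vertex $v$ corresponds to an extreme direction of $FD(v)$, and conversely. I would prove it from the fact that $FD(v)$ is the cone generated by the edge directions at $v$. In the nondegenerate case, the basic-direction description makes this immediate. In the degenerate case I would instead use the tangent cone, i.e.\ the cone of $\X$ at the vertex $v$, whose extreme rays are exactly the edges. With that claim in hand, both inclusions reduce to the path arguments above.
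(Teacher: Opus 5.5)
Your proof is correct. The paper does not prove this statement: it quotes it from \citet{bennouna2025whatdata} and uses it as a black box (e.g.\ in Lemma~\ref{lem:query-relevant}). So there is no in-paper route to compare against, and what you have written is a complete, self-contained argument.

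Both inclusions check out. The first uses only the edge structure at a vertex and holds for arbitrary $\C$. In the second, convexity enters only to keep the segment $\{c_\lambda\}$ inside $\C$. This matches the paper's remark that this theorem, unlike Theorem~\ref{thm:bennouna-delta}, needs neither openness nor closedness. Your chaining argument in fact works verbatim whenever any two points of $\C$ are joined by a polygonal path in $\C$.

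Two small remarks.

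\textbf{The degenerate/nondegenerate case split is unnecessary.} At any vertex $v$ of a polyhedron, $FD(v)=\cone(\X-v)$. Indeed, if $\delta=t(y-v)$ with $t>0$ and $y\in\X$, then $v+\varepsilon\delta\in\X$ whenever $\varepsilon t\le 1$. So $FD(v)$ is the tangent cone regardless of degeneracy. Its extreme rays are exactly the edge directions at $v$, by the standard correspondence between faces of this pointed cone and faces of $\X$ containing $v$. Degeneracy only matters if you work with the basic directions $\delta(B,j)$ of a single basis, which at a degenerate vertex need not even be feasible.

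\textbf{The chaining step.} The cleanest choice of breakpoints is all endpoints in $[0,1]$ of the closed intervals $\{\lambda: c_\lambda\in\Lambda(v)\}$, $v\in\X^\angle$. The set of optimal vertices is then constant on each open piece, and closedness of the cones extends this to the closed piece. Also, your vertex labels are off by one: you have $k$ pieces but refer to vertices $v_0,\dots,v_k$. This is harmless.
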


Combining \Cref{thm:bennouna-delta,thm:bennouna-spanDelta} yields the following subspace characterization,
which we will use throughout.

\begin{corollary}[Subspace characterization \citet{bennouna2025whatdata}, Corollary 1]
\label{cor:dir-char}
Let $\C$ be open and convex. A dataset $\Dset$ is an SDD for $(\X,\C)$ if and only if
\[
\dir(\X^\star(\C)) \subseteq \spanop(\Dset),
\]
hence, the minimal size of global SDD is $d^\star$.
\end{corollary}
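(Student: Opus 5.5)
The first part of the statement follows by chaining the two earlier results of \citet{bennouna2025whatdata}. Let $\C$ be open and convex. By \Cref{thm:bennouna-delta}, $\Dset$ is an SDD for $(\X,\C)$ if and only if $\Delta(\X,\C)\subseteq\spanop(\Dset)$. Since $\spanop(\Dset)$ is a linear subspace, this inclusion is equivalent to $\spanop\Delta(\X,\C)\subseteq\spanop(\Dset)$: one direction is trivial, and the other holds because a subspace containing a set contains its span. \Cref{thm:bennouna-spanDelta} applies to every convex $\C$, so in particular to ours, and gives $\spanop\Delta(\X,\C)=\dir(\X^\star(\C))$. Substituting yields the claimed equivalence $\Dset$ is an SDD $\iff$ $\dir(\X^\star(\C))\subseteq\spanop(\Dset)$.

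For the minimal size, I will prove matching lower and upper bounds.

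\emph{Lower bound.} Let $\Dset=\{q_1,\dots,q_l\}$ be any SDD. By the equivalence, $\Wstar\subseteq\spanop(\Dset)$, so
\[
d^\star=\dim\Wstar\le\dim\spanop(\Dset)\le l .
\]

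\emph{Upper bound.} Choose a basis $\{w_1,\dots,w_{d^\star}\}$ of $\Wstar$ and set $\Dset:=\{w_1,\dots,w_{d^\star}\}$. Then $\spanop(\Dset)=\Wstar$, so $\Dset$ is an SDD of size exactly $d^\star$.

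Two degenerate cases need a check. If $d^\star=0$, the empty dataset is an SDD: its span is $\{0\}\supseteq\Wstar$, and the decision rule is constant, which is consistent because $\X^\star(\C)$ then consists of a single vertex. For a general SDD, repeated or linearly dependent vectors in $\Dset$ do not affect the argument, since only the dimension of $\spanop(\Dset)$ enters the lower bound.

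I do not expect a real obstacle here, as the statement is a corollary of the two cited theorems. The only point needing care is the hypotheses. Openness of $\C$ is used only through \Cref{thm:bennouna-delta}, while \Cref{thm:bennouna-spanDelta} needs only convexity, so the combined statement holds exactly under ``open and convex''.
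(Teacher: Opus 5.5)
Your proof is correct and follows the paper's route. The paper obtains this corollary simply by combining \Cref{thm:bennouna-delta} with \Cref{thm:bennouna-spanDelta}, and you spell out the two steps it leaves implicit: a subspace contains $\Delta(\X,\C)$ iff it contains its span, and the dimension count ($|\Dset|\ge\dim\spanop(\Dset)\ge d^\star$ for any SDD, with a basis of $\Wstar$ attaining $d^\star$).

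One minor nit: in your $d^\star=0$ remark, $\X^\star(\C)$ is a single vertex only when $\C\neq\emptyset$. That side check is unnecessary anyway, since the equivalence already covers this case.
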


In particular, the necessity direction of \Cref{thm:bennouna-delta} uses that $\C$ is open.
Accordingly, any argument or algorithm that invokes this direction requires an openness assumption on $\C$. In contrast, \Cref{thm:bennouna-spanDelta} requires only convexity and applies to both open and closed convex sets.\footnote{This distinction is important: our pointwise-sufficiency algorithms (\Cref{sec:cuttingplanes}) leverage \Cref{thm:bennouna-spanDelta} under a closed convex prior, while the global SDD size characterization requires $\C$ to be open and convex.}

\section{Hardness and Relaxation}
\label{sec:hardness}
\subsection{Computational hardness.}
The geometric characterizations in Section~\ref{sec:prelim} are information-theoretic in nature: they identify the subspace that any global sufficient dataset must capture.
Algorithmically, a natural goal is to compute the minimum size of a global SDD and to construct one.
By \Cref{cor:dir-char}, when $\C$ is open and convex, this minimum size equals the intrinsic decision-relevant dimension $d^\star=\dim\dir\!\bigl(\X^\star(\C)\bigr)$.
We state informal versions of our hardness results below and provide brief proof sketches to highlight the structure of the reductions; full formal statements and complete proofs are deferred to Appendix~\ref{app:hardness}.
Our reductions construct highly structured instances: shortest-path flow polytopes with budgeted arc-length perturbations.

\begin{theorem}[Informal]
\label{thm:np-hard-dir-dim}
It is $\mathsf{NP}$-hard to compute the intrinsic decision-relevant dimension $d^\star$,
given as input a polytope $\X\subseteq\RR^d$ and a polyhedral uncertainty set $\C$ specified in $H$-representation.
This hardness persists whether $\C$ is given as a closed polyhedron or as an open polyhedron.
\end{theorem}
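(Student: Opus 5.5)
We will prove \Cref{thm:np-hard-dir-dim} by a polynomial-time reduction from an $\mathsf{NP}$-complete path problem. The paper says the instances are shortest-path flow polytopes with budgeted arc-length perturbations, so we take $\X$ to be the unit $s$--$t$ flow polytope of a DAG $G=(V,E)$ on arc variables $x\in\RR^{E}$ (flow conservation plus $x\ge 0$). This polytope is bounded, its constraint matrix can be made full row rank by dropping one redundant conservation row, and its extreme points are the incidence vectors $\1_P$ of $s$--$t$ paths $P$. Since it suffices to show that one suitable question about $d^\star$ is hard, we will reduce to the decision problem ``is $d^\star\ge k$?''. The key move is to encode a hard combinatorial question as whether a particular arc (or set of arcs) lies on some path that is \emph{optimal} for a cost in $\C$.

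\textbf{Step 1 (choice of prior).} Let $\C=\{\bar\ell+u:\ 0\le u,\ \1^\top u\le B\}$, or the same with some coordinates of $u$ pinned. Here $\bar\ell$ is a base length vector and $B$ is a budget. With this choice, $\X^\star(\C)$ is exactly the set of paths that become shortest after an adversary adds at most $B$ total length to arcs. Perturbation costs can be kept $0/1$-weighted, so every entry has polynomial size.

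\textbf{Step 2 (dimension computation).} Since $\Wstar=\spanop\{\1_P-\1_{P'}:P,P'\in\X^\star(\C)\}$, we design the gadget so that $\X^\star(\C)$ always contains a fixed family $\mathcal{F}_0$ of paths whose difference vectors span a known subspace of dimension $k-1$. We also arrange that one special path $P^\dagger$, or any path through a designated arc $e^\dagger$, is reachable iff the source instance is a YES instance. Any such path adds the coordinate of $e^\dagger$, which is outside the span of $\mathcal{F}_0$. So $d^\star=k$ in the YES case and $d^\star=k-1$ in the NO case. In the open case we use $\C^\circ$ (strict inequalities). Because $\Wstar$ depends only on which vertices are attained, we check that each attained vertex is still attained at some nearby interior cost: the optimality cone of a vertex is full dimensional, and a face attained in $\C$ can be perturbed into the interior of a neighboring cone within $\C^\circ$. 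To guarantee this, we make the budget inequality slack by a small margin, e.g.\ by scaling $B$ by $1+\epsilon$ with polynomially small $\epsilon$, and we use integrality of path lengths to argue that the YES/NO gap survives.

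\textbf{Step 3 (source problem and main obstacle).} The crux is choosing a source problem for which ``some path through $e^\dagger$ becomes a shortest path under a budget-$B$ length increase'' is $\mathsf{NP}$-hard. Note that raising lengths never helps a path unless it avoids the raised arcs. So the adversary wants to interdict all paths shorter than the target path $P$ by spending budget on arcs outside $P$. That is a shortest-path interdiction / most-vital-arcs question, known to be $\mathsf{NP}$-hard (Bar-Noy--Khuller--Schieber, Khachiyan et al.). We would first try to reduce from \textsc{Vertex Cover} or \textsc{3-SAT} directly, via a layered DAG: each competing short path corresponds to a clause or edge, and cutting it with one unit of budget corresponds to choosing a variable or vertex. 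The target path through $e^\dagger$ is then optimal iff $B$ units suffice to lengthen every shorter path.

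The main difficulty is controlling the paths the gadget creates unintentionally. We must ensure that the YES/NO distinction is not already decided by spurious reachable paths, i.e.\ that the span of all other reachable paths never contains $e^\dagger$'s coordinate. We would address this by making $e^\dagger$ lie only on paths of a single controlled type, and by choosing $\bar\ell$ with large separations so that reachable paths are confined to $\mathcal{F}_0\cup\{\text{target paths}\}$.
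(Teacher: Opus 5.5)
Your framework matches the paper's: the unit-flow polytope of a DAG, a budgeted length-increase prior, and a designated arc whose coordinate certifies a jump in $\dim\dir(\X^\star(\C))$. The gap is that the proposal never supplies a working reduction, and the source of hardness you point to does not transfer. The increase $w$ is continuous with a linear budget, so deciding whether some $w\ge 0$ with $\kappa^\top w\le B$ makes a \emph{fixed} path $P$ shortest is an LP feasibility problem in $(w,\pi)$: require $\pi_v-\pi_u\le d_{uv}+w_{uv}$ on every arc and $\pi_t-\pi_s\ge (d+w)(P)$. This is continuous-lengthening interdiction (Fulkerson--Harding). It is not the discrete most-vital-arcs/interdiction problems you cite, whose hardness comes from integral deletions. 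Consequently your ``single special path $P^\dagger$'' option is polynomially decidable. Your gadget in which ``one unit of budget cuts a competing path, corresponding to choosing a vertex'' only computes an LP relaxation: for \textsc{Vertex Cover} it imposes $w_u+w_v\ge 1$ per edge, which is fractional vertex cover and is easy. The NP-hard choice therefore cannot sit in the budget allocation. It must sit in the existential choice of \emph{which} path through the designated arc is made shortest.

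This is exactly what the paper imports from the PISPP-W+ reduction of \citet{ley2025solutionmethods}. A variable layer, the required arc $r$, and a clause layer make every $r$-path encode a truth assignment plus one literal per clause. Each literal vertex gets a shortcut arc from the variable vertex that falsifies it. Lengths are tuned so every shortcut path is exactly one unit shorter than every $r$-path, and shortcut arcs have modification cost $B+1$, so they can never be lengthened by a full unit. Then some $r$-path can be made shortest within $B=n+2m$ iff $\varphi$ is satisfiable. Since all $r$-avoiding paths tie at $w=0$, one always has $X_r^\angle\subseteq\X^\star(\C)$, with equality in the NO case. Comparing with $\dim\dir(X_r^\star(\C))=\dim X_r$ therefore decides satisfiability; this quantity is polynomial-time computable, so it also realizes your ``known $k$''.

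Your open-case step needs the same gadget-specific margin. Pinning coordinates of $u$ is incompatible with an open prior, and a vertex attained in $\C$ need not be attained in its interior. The paper adds the strictly positive telescoping perturbation $\delta(u,v)=\epsilon(\rho(v)-\rho(u))$ along a topological order $\rho$, which shifts all $s$--$t$ path lengths equally. It also enlarges the budget to $B+\eta$ with $\eta<1$. This is safe because in the NO case a shortest $r$-path forces $w(e)>1$ on some shortcut arc $e$, which costs more than $B+1$.
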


\paragraph{Proof sketch for Theorem~\ref{thm:np-hard-dir-dim}.}
We start from 3-SAT and use the PISPP-W+ construction of~\cite[Theorem~3.1]{ley2025solutionmethods}.
Given a formula $\varphi$, the reduction outputs a directed acyclic graph (DAG) together with baseline arc-lengths $d$,
a budget vector $\kappa$, a budget $B$, and a required arc $r$, defining a budgeted uncertainty set of admissible length vectors
$c=d+w$ (with $w\in\mathbb{Q}^A_{\ge 0}$ and $\kappa^\top w \le B$). The formula $\varphi$ is satisfiable if and only if there exists
such a modification $w$ for which some shortest $s$--$t$ path with respect to $d+w$ contains the required arc $r$. We then show that deciding the feasibility of the resulting PISPP-W+ instance reduces to checking whether
$\dim\dir(X^\star(\mathcal{C})) > \dim\dir(X_r^\star(\mathcal{C}))$, where $X_r := \{x\in X : x_r=0\}$; see \eqref{eq:np-hard-compare}.
Geometrically, restricting to the face $x_r=0$ removes a decision-relevant extreme direction if and only if there exists a feasible
modification that makes some shortest $s$--$t$ path use $r$.
For the open set $\mathcal{C}_{\mathrm{op}}$, \Cref{lem:openify-pispp}  “opens” the budget via a small topological perturbation without changing the
shortest-path structure and thus maintaining the answer. \hfill $\square$

\smallskip

When $\C$ is open and convex (in particular, for the open polyhedral instances in \Cref{thm:np-hard-dir-dim}), combining \Cref{cor:dir-char} and \Cref{thm:np-hard-dir-dim} implies that both computing the size of a minimum global SDD and constructing a minimum global SDD are $\mathsf{NP}$-hard.

\subsection{A relaxation: pointwise sufficient datasets}
The computational hardness of finding minimum global SDDs motivates us to relax the concept to an instance-wise notion. Instead of asking one dataset to work uniformly for every $c\in\C$, we ask whether it suffices for the particular realized cost vector currently being faced. This is the right granularity for our later learning problem, where the training data consist of i.i.d. cost vectors and the loss is evaluated samplewise.
\begin{definition}[Pointwise sufficient decision dataset]
\label{def:pointwise-sdd}
Fix a (possibly unknown) $c\in\C$.
A finite query dataset $\Dset$ is \textbf{pointwise sufficient at $c$} if there exists a decision $x^\star\in\X$ such that
\[
x^\star \in \X^\star(c')\qquad \forall\,c'\in\C\bigl(\Dset,s(c;\Dset)\bigr).
\]
Equivalently, the data-consistent fiber $\C(\Dset,s(c;\Dset))$ is contained in a single optimality region of $\X$.
\end{definition}

\begin{remark}
Although the definition is indexed by a cost vector $c$, the algorithmic test below will turn it into a cone-containment question for an optimal basis at $c$. We formulate the notion at a cost vector because later the statistical problem is posed over i.i.d. draws $c_1,\dots,c_n$ and the loss is evaluated per realized instance.
\end{remark}

\begin{remark}
Since $c\in \C(\Dset,s(c;\Dset))$, Definition~\ref{def:pointwise-sdd} can equivalently be stated as:
$\Dset$ is pointwise sufficient at $c$ if there exists $x^\star\in \X^\star(c)$ such that $x^\star$ remains optimal for every
$c'\in\C$ that produces the same measurements, i.e., $s(c';\Dset)=s(c;\Dset)$.
This parallels the structure of Definition~\ref{def:bennouna-sdd} but applies to a single cost vector rather than all $c\in\C$.
\end{remark}

The following basic properties of pointwise sufficiency follow immediately from the definition.
\begin{property}
\label{prop:pointwise-basic}
\textbf{(i) Monotonicity.} If $\Dset$ is pointwise sufficient at $c$ and $\Dset'\supseteq \Dset$, then $\Dset'$ is also pointwise sufficient at $c$, since
$\C(\Dset',s(c;\Dset'))\subseteq \C(\Dset,s(c;\Dset))$. \textbf{(ii) Global $\Rightarrow$ pointwise.} If $\Dset$ is an SDD for $(\X,\C)$, then $\Dset$ is pointwise sufficient at every $c\in\C$.
\end{property}

\begin{proof}[Proof of Property~\ref{prop:pointwise-basic}(i)]
Let $\Dset\subseteq\Dset'$ and fix $c\in\C$.
Write $s:=s(c;\Dset)$ and $s':=s(c;\Dset')$.
By the definition of the fiber,
\[
\C(\Dset',s')
= \{c'\in\C : q^\top c' = q^\top c\ \ \forall q\in\Dset'\}.
\]
Since $\Dset\subseteq\Dset'$, any $c'\in\C(\Dset',s')$ satisfies $q^\top c'=q^\top c$ for all $q\in\Dset$, and hence
$c'\in\C(\Dset,s)$.
Therefore $\C(\Dset',s')\subseteq\C(\Dset,s)$.

Now assume $\Dset$ is pointwise sufficient at $c$.
Then there exists $x^\star\in\X$ such that $x^\star\in\X^\star(c'')$ for all $c''\in\C(\Dset,s)$.
By the containment above, the same $x^\star$ is optimal for all $c''\in\C(\Dset',s')$, so $\Dset'$ is also pointwise sufficient at $c$.
\end{proof}

\begin{proof}[Proof of Property~\ref{prop:pointwise-basic}(ii)]
Let $\Dset$ be a global SDD for $(\X,\C)$ in the sense of \Cref{def:bennouna-sdd}.
By definition, there exists a mapping $\widehat X:\RR^{|\Dset|}\to\mathcal{P}(\X)$ such that for every $c\in\C$,
\[
\widehat X\bigl(s(c;\Dset)\bigr) \;=\; \X^\star(c).
\]
Fix any $c\in\C$ and write $s:=s(c;\Dset)$.
For any $c'\in\C(\Dset,s)$ we have $s(c';\Dset)=s$ by definition of the fiber, and therefore
\[
\widehat X(s)
\;=\;
\widehat X\bigl(s(c';\Dset)\bigr)
\;=\;
\X^\star(c').
\]
In particular, $\widehat X(s)$ is nonempty, and any choice of $x^\star\in\widehat X(s)$ satisfies
$x^\star\in\X^\star(c')$ for all $c'\in\C(\Dset,s)$.
Thus the single decision $x^\star$ is optimal for {all} costs in the fiber, meaning that $\Dset$ is pointwise
sufficient at $c$ in the sense of \Cref{def:pointwise-sdd}.
\end{proof}

A natural verification problem is: given $(\X,\C)$, a dataset $\Dset$, and a cost vector $c\in\C$, decide whether
$s(c;\Dset)$ already suffices to determine an optimal decision.
The next theorem shows that even the verification problem is intractable in full generality.

\begin{theorem}[Informal]
\label{thm:coNP-hard-pointwise-check}
It is $\mathsf{coNP}$-hard to decide, given a bounded polytope $\X\subseteq\RR^d$, a polyhedral uncertainty set $\C\subseteq\RR^d$ specified in $H$-representation,
a dataset $\Dset$, and a cost vector $c\in\C$, whether $\Dset$ is pointwise sufficient at $c$.
\end{theorem}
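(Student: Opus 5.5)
We reduce from the complement of a known $\mathsf{NP}$-complete problem. The closest available source is the 3-SAT-based PISPP-W+ construction of \cite[Theorem~3.1]{ley2025solutionmethods}, which we already use for \Cref{thm:np-hard-dir-dim}. We show that ``$\varphi$ is unsatisfiable'' holds if and only if a suitably chosen dataset is pointwise sufficient at a suitably chosen cost.

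\textbf{Instance.} Given $\varphi$, build the DAG, baseline lengths $d$, budget vector $\kappa$, budget $B$ and required arc $r$ exactly as in that reduction. Let $\X$ be the $s$--$t$ unit-flow polytope of the DAG. It is bounded because the graph is acyclic, and after adding slack variables for the capacity bounds $x\le \1$ it can be put in the standard form $\{Ax=b,\,x\ge0\}$ if needed. Its extreme points are exactly the $s$--$t$ paths. Take the $H$-polyhedron $\C=\{d+w:\ w\ge 0,\ \kappa^\top w\le B\}$, the empty dataset $\Dset=\varnothing$, and the cost $c=d\in\C$ (the choice $w=0$). With $\Dset=\varnothing$ the fiber is all of $\C$, so pointwise sufficiency at $c$ says exactly that some single vertex $x^\star$ is optimal for every $c'\in\C$.

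\textbf{Key equivalence.} We need: there is a common optimal path for all of $\C$ if and only if no $c'\in\C$ has a shortest path through $r$. The direction ($\Leftarrow$) is where the care goes, so we arrange the construction to support it. Add a fresh parallel $s$--$t$ arc $e_0$ whose length is a constant lying strictly between two values. The lower value is the baseline shortest length among paths avoiding $r$, plus the largest increase the budget can impose on such paths; the upper value is the length of the shortest $r$-avoiding path that the gadget forces to stay long. Concretely, we read off from the PISPP-W+ gadget that every $r$-free path through the gadget has length at least some threshold $L$ under every admissible $w$, while any path through $r$ can reach length $<L$ only when $\varphi$ is satisfiable. Then $e_0$ is optimal for all $c'\in\C$ precisely when no admissible $w$ makes an $r$-path strictly shorter than $e_0$. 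If $\varphi$ is unsatisfiable, $e_0$ is a common optimizer, so the dataset is pointwise sufficient. If $\varphi$ is satisfiable, some $c'$ has a shortest path through $r$, and we use the strict margin (scaling rational data if needed) to ensure $e_0$ is not optimal at $c'$. Since at $c=d$ every optimal path avoids $r$, no vertex is optimal on the whole fiber, so pointwise sufficiency fails. The sizes are polynomial, so unsatisfiability reduces to pointwise sufficiency, which gives $\mathsf{coNP}$-hardness.

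\textbf{Main obstacle.} The hard step is calibrating the threshold so that ``some shortest path uses $r$'' converts cleanly into ``no common optimizer exists.'' Ties between $r$-paths and $e_0$, and the possibility that the budget shifts which $r$-free path is shortest, both threaten the equivalence. What I would try first is to open up the PISPP-W+ gadget and exploit its integrality: arc lengths and budgets are integers, so a satisfying modification can be taken to give an $r$-path strictly shorter by at least $1$ after scaling, while unsatisfiability gives a uniform lower bound. If the gadget does not expose such a margin, the fallback is the polyhedral containment route. There we encode $H$-in-$V$ containment \citep{freundorlin1985complexity}: the fiber plays the $H$-polytope and a single optimality cone $\Lambda(x^\star)$ plays the target region, and we realize a prescribed cone by choosing $\X$ with a simple vertex whose feasible-direction cone is polar to the desired $V$-cone. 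This version also needs care, because the definition allows any vertex $x^\star$ as the common optimizer, not just one fixed in advance.
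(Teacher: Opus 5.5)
\textbf{Primary route.} This route cannot work, and the obstruction is structural rather than a matter of calibrating a threshold. In $\C=\{d+w:\ w\ge 0,\ \kappa^\top w\le B\}$ arc lengths can only increase. So for every $c'\in\C$, each path through $r$ has length at least $2n+2m+1$, while at $c'=d$ the $r$-free shortcut paths have length $2n+2m$.

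Hence the threshold you posit does not exist. You want a bound $L$ on $r$-free lengths, valid for all admissible $w$, that $r$-paths undercut exactly when $\varphi$ is satisfiable. But necessarily $L\le 2n+2m$, and no $r$-path ever goes below $2n+2m+1$. The PISPP-W+ equivalence is purely relative: $r$ becomes shortest because its competitors are inflated. A bypass arc $e_0$ of fixed length $\ell_0$ (say with $w_{e_0}$ pinned to $0$) cannot detect this.
\begin{itemize}
\item If $\ell_0\le 2n+2m$, then $e_0$ is optimal for every $c'\in\C$, whatever $\varphi$ is.
\item If $\ell_0>2n+2m$ (the regime of the ``lower value'' you describe), then $e_0$ is not optimal at $c=d$, and no $r$-path is optimal at $d$ either. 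Any shortcut path $P$ through some $b_{jk}$ is knocked out by spending the whole budget on its unit-cost arc $(b_{jk},t_j)$, while a shortcut path through $b_{jk'}$ with $k'\neq k$ keeps length $2n+2m$. So no common optimizer exists, again whatever $\varphi$ is.
\end{itemize}
With $\Dset=\varnothing$ the fiber is the entire budget set, and the adversary's freedom there settles pointwise sufficiency independently of satisfiability.

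\textbf{Fallback route.} Your fallback is the route the paper actually takes, but as sketched it contains a step that cannot work and omits the ingredients that make it go through.

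First, the distinguished vertex cannot be simple. At a nondegenerate vertex the optimality cone is $\{c:\ c^\top\delta(B,j)\ge 0,\ j\in N\}$, so containment of an $H$-polytope fiber reduces to $d-m$ LPs. This is exactly why Algorithm~\ref{alg:pointwise-cp} is polynomial under Assumption~\ref{ass:nondeg}. Encoding $\cone\{(v_i,1)\}_{i=1}^M$, whose facet description may be exponential, forces a \emph{degenerate} vertex at which all $M$ defining constraints are tight.

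The paper's construction is as follows. It builds $X=\{(w,r,s)\ge 0:\ w+r=2\cdot\1,\ \bar V w-s=\bar V\1\}$, where $\bar V$ has rows $(v_i,1)^\top$, with distinguished vertex $z_0=(\1,\1,0)$. KKT gives $((\tilde y,\tilde t),0,0)\in\Lambda(z_0)$ iff $(\tilde y,\tilde t)\in\cone\{(v_i,1)\}$. The homogenization $y\mapsto(y,1)$ turns this into $y\in Q$. It then takes $\C=\{((y,1),0,0):\ y\in[-1,1]^d\}$ and $\Dset=\varnothing$.

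The issue you flag, that any vertex could serve as the common optimizer, is resolved by choosing $c_0=((0,1),0,0)$. Using the restricted hard family with $0\in\operatorname{int}(Q)$, one shows that $z_0$ is the \emph{unique} minimizer at $c_0$. This forces the common optimizer to be $z_0$, so pointwise sufficiency at $c_0$ is exactly $\C\subseteq\Lambda(z_0)$, i.e.\ $P\subseteq Q$. Without the degenerate vertex, the homogenization, and this uniqueness argument, your fallback does not yet give a reduction.
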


\paragraph{Proof sketch for Theorem~\ref{thm:coNP-hard-pointwise-check}.}
We reduce from the classical \emph{$H$-in-$V$ polytope containment} problem.\footnote{A \emph{$V$-polytope} is a polytope specified by its vertices (a vertex representation), e.g., \(Q=\operatorname{conv}\{v_1,\ldots,v_M\}\). The $H$-in-$V$ containment problem asks whether \(P\subseteq Q\) given \((H,h)\) and \(\{v_j\}_{j=1}^M\).}
This problem is $\mathsf{coNP}$-complete \citep{freundorlin1985complexity,gritzmannklee1993radii} even for the restricted family
$P=[-1,1]^d\subseteq Q$ with $0\in\operatorname{int}(Q)$.
Given such an instance, we construct a bounded \emph{standard-form} polytope $X=\{z:Az=b,\ z\ge 0\}$ with a distinguished vertex $x_0$ whose optimality cone encodes $Q$ via a lifting:
\[
((y,1),0,0)\in\Lambda(x_0)
\quad\Longleftrightarrow\quad
(y,1)\in \cone\{(v_i,1)\}_{i=1}^M
\quad\Longleftrightarrow\quad
y\in Q.
\]
We then set $\C:=\{((y,1),0,0):y\in P\}$ and take $\Dset=\varnothing$, so the data-consistent fiber equals all of $\C$.
Since $x_0$ is the unique minimizer for a reference cost $c_0=((0,1),0,0)$, pointwise sufficiency at $c_0$ reduces to the containment $\C\subseteq\Lambda(x_0)$, which holds if and only if $P\subseteq Q$.
See Appendix~\ref{app:coNP-hard-pointwise-check}. \hfill $\square$

\smallskip
The same containment-based construction can be strengthened to show that even the weakest \emph{global} problem---deciding whether \emph{no data}
($\Dset=\varnothing$) already suffices---is $\mathsf{coNP}$-hard under an open, full-dimensional polyhedral prior, as stated next.

\begin{theorem}[Informal]
\label{thm:conp-hard-global0}
It is $\mathsf{coNP}$-hard to decide whether the empty dataset $\Dset=\varnothing$ is globally sufficient for $(\X,\C)$,
 when $\X\subseteq\RR^d$ is a bounded polytope and $\C$ is an open polyhedron specified in $H$-representation.
Moreover, computing the intrinsic decision-relevant dimension $d^\star$ is $\mathsf{coNP}$-hard.
\end{theorem}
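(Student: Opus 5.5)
We reduce again from $H$-in-$V$ containment in the restricted form $P=[-1,1]^k\subseteq Q=\conv\{v_1,\dots,v_M\}$ with $0\in\operatorname{int}(Q)$, which is $\mathsf{coNP}$-complete. We reuse the standard-form polytope $\X$ and the distinguished vertex $x_0$ from the proof sketch of Theorem~\ref{thm:coNP-hard-pointwise-check}, whose optimality cone satisfies $((y,1),0,0)\in\Lambda(x_0)\iff y\in Q$. The prior $\C$ must now be open and full-dimensional. We therefore thicken the slice $\{((y,1),0,0):y\in P\}$ to an open polyhedron
\[
\C_\varepsilon:=\bigl\{(u,t,w):\ \|u\|_\infty< t,\ |t-1|<\varepsilon,\ \|w\|_\infty<\varepsilon\bigr\},
\]
where $w$ collects the remaining coordinates. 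For $\varepsilon$ of polynomial bit length, $\C_\varepsilon$ is open, convex, and given in $H$-representation. The scaling $\|u\|_\infty<t$ keeps the cone structure: $(u,t)$ lies in the cone over $P$ whenever $t>0$.

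The key steps follow, in order.

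First, we record that for an open convex $\C$, Corollary~\ref{cor:dir-char} gives $\varnothing$ is an SDD iff $\Wstar=\{0\}$, i.e.\ iff $d^\star=0$. This holds iff a single vertex is optimal for every $c\in\C$. Indeed, $\X^\star(\C)$ is a singleton iff $d^\star=0$, and openness rules out ties on a relatively open set. The same equivalence yields the ``moreover'' clause at once: deciding $d^\star=0$ is the complement question, so computing $d^\star$ is $\mathsf{coNP}$-hard.

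Second, we prove the YES direction: if $P\subseteq Q$, then $\C_\varepsilon\subseteq\operatorname{int}\Lambda(x_0)$, so $x_0$ is the unique optimum throughout and $d^\star=0$. Here we use $0\in\operatorname{int}(Q)$ to absorb the perturbation. The lifted cone $\cone\{(v_i,1)\}$ contains a neighborhood of $\cone(P\times\{1\})\setminus\{0\}$ of relative size proportional to the inradius slack; wait, that slack is zero if $P$ touches $\partial Q$. To handle this, we first scale: we replace $Q$ by $(1+\eta)Q$, which preserves the answer. The reason is that $P\subseteq Q\Rightarrow P\subseteq \operatorname{int}((1+\eta)Q)$, while $P\not\subseteq Q$ implies some vertex of $P$ lies strictly outside $Q$ by a margin at least $2^{-\mathrm{poly}}$, so a sufficiently small polynomial-size $\eta$ keeps it outside $(1+\eta)Q$. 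We then choose $\varepsilon$ small relative to the resulting margin. The $w$-coordinates are handled by the construction: in the lifting, the extra coordinates have reduced costs dominated by a positive margin at $x_0$.

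Third, we prove the NO direction: if some vertex $p$ of $P$ lies outside $(1+\eta)Q$, then $c_1=((p,1),0,0)$, after a slight shrink into the interior of $\C_\varepsilon$, lies outside $\Lambda(x_0)$. Meanwhile $c_0=((0,1),0,0)\in\C_\varepsilon$ has $x_0$ as its unique optimum. So $\X^\star(\C_\varepsilon)$ contains $x_0$ and some other vertex, giving $d^\star\ge1$, and $\varnothing$ is not sufficient. Strict exclusion of a shrunken copy $(1-\theta)p$ follows again from the bit-size margin.

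The main obstacle is the second step: we need quantitative polynomial-bit-size bounds guaranteeing that both the $\eta$-inflation and the $\varepsilon$-thickening, in the extra coordinates of the standard-form lifting, preserve containment in $\operatorname{int}\Lambda(x_0)$. My first attempt would use standard vertex/facet bit-complexity bounds for $Q$ (facet-defining inequalities of $Q$ have size polynomial in the input, which bounds the distances) together with an explicit reduced-cost description of $\Lambda(x_0)$ from the lifting. That description lets the $w$-perturbation enter only linearly with bounded coefficients.
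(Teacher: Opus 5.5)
Your reduction is correct in outline, but it takes a heavier route than the paper. The difference lies entirely in how the prior is made open and full-dimensional.

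\textbf{Your route.} You thicken the slice in every coordinate. Perturbations of your $w=(c_r,c_s)$ therefore move the effective cost $T(c)=(u,t)-c_r+\bar V^\top c_s$ in arbitrary directions; the minimizer set over the polytope depends on $c$ only through $T(c)$. That is what forces three extra ingredients: the $(1+\eta)Q$ inflation, the $2^{-\mathrm{poly}}$ inradius and violation margins from facet bit-complexity of $\conv\{v_i\}$, and the choice $\varepsilon\ll\eta r$.

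\textbf{The paper's route.} The paper takes the prior to be the preimage $\C_{\mathrm{op}}:=T^{-1}(B_{\mathrm{op}})$ of the open set $B_{\mathrm{op}}=\{(\tilde y,\tilde t):\ \tfrac12<\tilde t<\tfrac32,\ |\tilde y_j|<\tilde t\ \forall j\}$. This set is automatically an open, full-dimensional $H$-polyhedron. The extra coordinates are invisible to the LP because $\Lambda(x_0)=T^{-1}(\cone\{\bar v_i\})$. The question then collapses to $B_{\mathrm{op}}\subseteq\cone\{\bar v_i\}\iff(-1,1)^k\subseteq Q\iff[-1,1]^k\subseteq Q$, where the last step uses closedness of $Q$. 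Openness of $B_{\mathrm{op}}$ gives containment in the interior of the cone, hence uniqueness of $x_0$ throughout the prior. In the NO case a violating point of the open cube exists outright. No inflation, shrinking, or bit-size estimates are needed.

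\textbf{Comparison.} Your version does go through once the sketched bounds are written out:
\begin{itemize}
\item a vertex of $P$ that violates a facet inequality with integral data violates it by at least $1$;
\item $Q+\eta r\,\mathbb{B}\subseteq(1+\eta)Q$, where $r$ is the inradius at $0$ and $\mathbb{B}$ the unit ball;
\item $\|\tilde y/\tilde t-u/t\|$ is $\varepsilon$ times a polynomially bounded factor.
\end{itemize}
Your route also yields a YES-case margin that would survive passing to the closure of the prior. However, it relies on the standard but nontrivial polynomial bound on facet encodings of a $V$-polytope, which the paper's construction sidesteps.

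One small correction: $x_0$ is degenerate whenever $M>k+1$, so $\Lambda(x_0)$ is not described by a single reduced-cost vector. The description that makes your $w$-perturbation enter linearly is the KKT/effective-cost one, $\Lambda(x_0)=T^{-1}(\cone\{\bar v_i\})$.
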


\paragraph{Proof sketch for Theorem~\ref{thm:conp-hard-global0}.}
Starting from the same containment instance, we replace the low-dimensional slice prior in the pointwise reduction by an \emph{open, full-dimensional} polyhedron $\C_{\mathrm{op}}$ constructed through an \emph{effective-cost} linear map $T$.
The LP over $X$ is designed so that its objective depends on $c$ only through $T(c)$.
We choose an open polyhedron $B_{\mathrm{op}}$ of effective costs with strictly positive last coordinate and set
$\C_{\mathrm{op}}:=\{c:\ T(c)\in B_{\mathrm{op}}\}$.
In the YES case $P\subseteq Q$, openness implies $B_{\mathrm{op}}\subseteq \operatorname{int}\cone\{(v_i,1)\}_{i=1}^M$, making $x_0$ the unique optimizer for every $c\in\C_{\mathrm{op}}$ and hence allowing a decoder with $|\Dset|=0$.
In the NO case, $B_{\mathrm{op}}$ contains an effective cost outside $\cone\{(v_i,1)\}_{i=1}^M$, producing two costs in $\C_{\mathrm{op}}$ with different optimizers and ruling out zero-query global sufficiency.
See Appendix~\ref{app:conp-hard-global} for details. \hfill $\square$

Consequently, computing the minimum size of global SDDs and outputting such a dataset are $\mathsf{coNP}$-hard. Combining \Cref{thm:np-hard-dir-dim,thm:conp-hard-global0} with \Cref{cor:dir-char} yields the following final statement.

\begin{corollary}
\label{cor:final-hardness}
For convex and open $\C$, constructing a minimum-size global SDD and computing its minimum size are both
$\mathsf{NP}$-hard and $\mathsf{coNP}$-hard.
\end{corollary}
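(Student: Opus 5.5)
The corollary follows by combining the two hardness theorems with the subspace characterization of \Cref{cor:dir-char}. The proof is essentially bookkeeping: check that both reductions produce instances in the regime where that characterization applies, namely $\C$ open and convex.

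\textbf{Step 1 (minimum size equals $d^\star$).} For open convex $\C$, \Cref{cor:dir-char} says that a dataset $\Dset$ is a global SDD if and only if $\Wstar\subseteq\spanop(\Dset)$. Hence every global SDD has at least $d^\star$ elements, and any basis of $\Wstar$ is a global SDD of exactly that size. So the minimum size is precisely $d^\star$, and computing it is the same problem as computing $d^\star$ on such instances.

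\textbf{Step 2 (NP- and coNP-hardness of the size).} The open-polyhedron variant of \Cref{thm:np-hard-dir-dim} produces instances with $\C_{\mathrm{op}}$ open and convex, since an open polyhedron is an intersection of open half-spaces. On these instances, deciding whether $d^\star$ exceeds the corresponding value for the restricted polytope $\X_r$ decides 3-SAT, so computing the minimum SDD size is $\mathsf{NP}$-hard. Likewise, \Cref{thm:conp-hard-global0} uses an open polyhedral $\C_{\mathrm{op}}$. There, deciding whether $\varnothing$ is a global SDD is equivalent, by Step 1, to deciding whether $d^\star=0$. That problem is $\mathsf{coNP}$-hard, so computing the minimum size is $\mathsf{coNP}$-hard as well.

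\textbf{Step 3 (constructing a minimum SDD).} Any algorithm that outputs a minimum-size global SDD also yields its size, by counting the returned vectors, at only polynomial extra cost. Therefore a polynomial-time constructor would give a polynomial-time algorithm for the size, and both hardness results transfer to the construction problem.

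\textbf{Main obstacle.} The one point needing care is confirming that the instance used in the $\mathsf{NP}$-hardness reduction for the comparison of $d^\star$ on $\X$ versus $\X_r$ is itself open and convex. This is where \Cref{lem:openify-pispp} is required, as opposed to the closed budget set. Once that is granted, as the informal statement of \Cref{thm:np-hard-dir-dim} asserts, the remaining steps are immediate.
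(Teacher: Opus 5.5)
Your proposal is correct and takes the paper's route. The paper combines the open-polyhedron case of Theorem~\ref{thm:np-hard-dir-dim} (one oracle call each on $(\X,\C_{\mathrm{op}})$ and $(\X_r,\C_{\mathrm{op}})$) and Theorem~\ref{thm:conp-hard-global0} with Corollary~\ref{cor:dir-char}, which identifies the minimum global SDD size with $d^\star$, and transfers hardness to the construction problem by counting the output vectors, exactly as you do. One small simplification: the $\mathsf{coNP}$ part does not need Corollary~\ref{cor:dir-char}, since by definition the minimum size is $0$ exactly when $\varnothing$ is a global SDD.
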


The $\mathsf{coNP}$-hardness in \Cref{thm:coNP-hard-pointwise-check} arises from degenerate LP geometry, where a single optimal extreme point may correspond to many bases, and its optimality region can have a complicated representation.
Going forward, we adopt a standard nondegeneracy assumption to recover tractability.

\begin{assumption}
\label{ass:nondeg}
The polytope $\X=\{x\in\RR^d:Ax=b,\ x\ge 0\}$ is nondegenerate: every extreme point $x^\star\in\X^\angle$ has exactly
$m$ strictly positive components.
\end{assumption}

Under Assumption~\ref{ass:nondeg}, each extreme point has a single associated optimality cone that admits a simple linear
inequality description. This makes it possible to test whether a fiber $\C(\Dset,s)$ is contained in a candidate cone by
solving only a polynomial number of LPs (as we do in \Cref{alg:pointwise-cp}). In contrast, without Assumption~\ref{ass:nondeg}, even deciding pointwise sufficiency is $\mathsf{coNP}$-hard by \Cref{thm:coNP-hard-pointwise-check}.

\section{A Tractable Algorithm that Finds Pointwise SDD}
\label{sec:cuttingplanes}
In this section, we present a sequential cutting-plane routine that is run \emph{offline} to construct a pointwise-sufficient dataset for a fixed (and possibly unknown) cost vector $c\in\C$. The resulting dataset itself remains non-adaptive: once it is returned, its measurements are prescribed in advance. At iteration $k$, with current query set $\Dset_k$, the routine maintains the data-consistent fiber $\C_k:=\C(\Dset_k,s(c;\Dset_k))$, that is, the set of cost vectors still consistent with the measurements collected so far, and repeatedly checks whether $\C_k$ lies inside the optimality cone of a single LP basis; when containment fails, it adds the normal of a reachable violated facet to shrink the fiber. Under Assumption~\ref{ass:nondeg}, each iteration solves one LP over $\X$ and at most $d-m$ convex minimization subproblems over the current fiber. Moreover, every new queried direction is decision-relevant (lies in $\dir(\X^\star(\C))$) and is linearly independent of the previous queries; consequently, the procedure makes at most $d^\star$ queries.

Unlike the global SDD characterization in Section~\ref{sec:prelim}, pointwise sufficiency is a containment statement about a single fiber and does not require $\C$ to be open.
Accordingly, the only structural assumption we use in this section for the prior set is convexity; we take it closed only for algorithmic convenience (Remark~\ref{rem:closure-ok}).
Remaining proofs for this section appear in Appendix~\ref{app:sec5}.

\begin{assumption}
\label{ass:C-convex}
The uncertainty set $\C\subseteq\RR^d$ is convex.
\end{assumption}
This covers the main priors considered in the paper, including polyhedral uncertainty sets, ellipsoids, and other convex confidence regions for the cost vector. To make the logic of the algorithm transparent, we first translate pointwise sufficiency into a cone-containment test around one optimal basis, and then isolate a subproblem that checks the individual facet inequalities of that cone.

% ------------------------------------------------------------
\subsection{Pointwise sufficiency as optimality-cone containment}
Fix a basis $B\subseteq\{1,\dots,d\}$ with $|B|=m$ and let $N$ denote its complement. Write $A=[A_B\ A_N]$, and let
$A_j$ be the $j$th column of $A$. The corresponding basic feasible solution is $x(B)$ with $x_N(B)=0$ and
$x_B(B)=A_B^{-1}b$. For each nonbasic index $j\in N$, let $\delta(B,j)\in\RR^d$ denote the standard edge direction obtained by increasing $x_j$ from~$0$, i.e., $\delta_N(B,j)=e_j$ and $\delta_B(B,j)=-A_B^{-1}A_j$. Under Assumption~\ref{ass:nondeg}, feasible bases are in one-to-one correspondence with vertices of $\X$.
Moreover, for any feasible basis $B$, the optimality region of its corresponding $x(B)$ is the polyhedral cone
\begin{equation}
\label{eq:cone-delta}
\Lambda(B) := \{c\in\RR^d:\ c^\top\delta(B,j)\ge 0\ \ \forall j\in N\}.
\end{equation}
After querying directions $q_1,\dots,q_k$, let $Q_k=[q_1\ \cdots\ q_k]\in\RR^{d\times k}$ and $s_k=Q_k^\top c$. The
current fiber is $\C_k :=\{c'\in\C:\ Q_k^\top c'=s_k\}.$ By Definition~\ref{def:pointwise-sdd}, the dataset is pointwise sufficient at $c$ once there exists a basis $B$ such that
$\C_k\subseteq\Lambda(B)$.
Using \eqref{eq:cone-delta}, this containment reduces to checking that
$\min_{c'\in\C_k} (c')^\top\delta(B,j) \ge 0$ for all $j\in N$, which motivates the face-intersection subproblem below.

% ------------------------------------------------------------
\paragraph{The face-intersection (FI) subproblem.}
\label{subsec:face-intersection}
The subproblem tests one facet inequality of the candidate cone by optimizing the corresponding linear functional over the current fiber.
For a direction $\delta\in\RR^d$ and a fiber $\C_k$, define the face-intersection value
\begin{equation}
\label{eq:FI}
m_k(\delta) \;:=\; \inf_{c'\in\C_k} (c')^\top\delta.
\end{equation}
Whenever the infimum is attained, we write
\[
c^{\mathrm{out}}_k(\delta)\in\arg\min_{c'\in\C_k} (c')^\top\delta.
\]
More generally, if $m_k(\delta)<0$, any point $c^{\mathrm{out}}_k(\delta)\in\C_k$ satisfying $(c^{\mathrm{out}}_k(\delta))^\top\delta<0$ serves as a witness of failure.
Under Assumption~\ref{ass:C-convex}, the value problem defining $m_k(\delta)$ is a convex optimization problem. In the algorithmic settings of interest, when $\C$ is polyhedral it is an LP; when $\C$ is an ellipsoid, $\mathrm{FI}(\delta;\C_k)$ admits a closed form (Proposition~\ref{prop:FI-ellipsoid}), which can speed up implementations.

% ------------------------------------------------------------
\subsection{A facet-hit cutting-plane algorithm}

\begin{algorithm}[h]
\caption{A cutting-plane algorithm that finds pointwise SDD}
\label{alg:pointwise-cp}
\small
\begin{algorithmic}[1]
\REQUIRE LP data $(A,b)$, prior set $\C$, a cost vector $c\in\C$,
and an initial dataset $\Dset_{\mathrm{init}}\subset\RR^d$.
\STATE Initialize $\Dset \gets \Dset_{\mathrm{init}}$.
\STATE Let $k \gets |\Dset|$; form $Q_k = [q_1\ \cdots\ q_k]$ from $\Dset$ (any fixed order) and set $s_k \gets Q_k^\top c$.
\WHILE{true}
    \STATE Set $\C_k := \{c'\in\C : Q_k^\top c' = s_k\}$ and set $c^{\mathrm{in}} \gets c$.
    \STATE Solve $\min\{(c^{\mathrm{in}})^\top x : Ax=b,\ x\ge 0\}$ and obtain an optimal basis $B$ with nonbasis $N$.
    \STATE \textbf{(Containment test via face-intersection subproblem)} For each $j\in N$, form $\delta_j := \delta(B,j)$ and compute
        \[
m_j := \min_{c'\in \C_k} (c')^\top \delta_j, 
\quad c^{\mathrm{out}}_j \in \arg\min_{c'\in \C_k} (c')^\top \delta_j .
        \tag{$\mathrm{FI}(\delta_j;\C_k)$}
        \]
    \STATE Let $j_0 \in \arg\min_{j\in N} m_j$, set
    $m_{\min} := m_{j_0}$, $c^{\mathrm{out}} := c^{\mathrm{out}}_{j_0}$.
    \IF{$m_{\min} \ge 0$}
        \STATE \textbf{Return} dataset $\Dset$ and certificate basis $B$ (decision $x(B)$); \textbf{break}.
    \ELSE
        \STATE \textbf{(Facet-hit rule)} For each $j\in N$ with $(c^{\mathrm{out}})^\top \delta_j < 0$, set
           $\alpha_j :=
        \frac{(c^{\mathrm{in}})^\top \delta_j}
        {(c^{\mathrm{in}})^\top \delta_j - (c^{\mathrm{out}})^\top \delta_j}
        \in [0,1).$
        
        \STATE Let $\alpha^\star := \min \alpha_j$ and pick any $j^\star \in \arg\min \alpha_j$.
        \STATE Set $q_{k+1} := \delta_{j^\star}$ and set $\sigma_{k+1} \gets q_{k+1}^\top c$.
        \STATE Update:
        $\Dset \gets \Dset \cup \{q_{k+1}\}$,
        $Q_{k+1} \gets [Q_k\ \ q_{k+1}]$,
        $s_{k+1} \gets (s_k^\top,\ \sigma_{k+1})^\top$,
        $k \gets k+1$.
    \ENDIF
\ENDWHILE
\end{algorithmic}
\end{algorithm}

Algorithm~\ref{alg:pointwise-cp} gives the full procedure.
At each iteration it anchors at the realized cost vector $c$ (i.e., we take $c^{\mathrm{in}}:=c\in\C_k$),
solves the LP under $c^{\mathrm{in}}$ to obtain an optimal vertex solution $x(B)$; by Assumption~\ref{ass:nondeg},
this vertex uniquely determines the corresponding feasible basis $B$, and tests whether \emph{every} cost vector in the fiber
is contained in the corresponding cone $\Lambda(B)$.
This is done by evaluating each facet inequality via the face-intersection subproblem.
If the minimum violation is nonnegative, the fiber is fully inside $\Lambda(B)$ and pointwise sufficiency holds.
Otherwise, a witness point $c^{\mathrm{out}}\in\C_k$ may violate several facet inequalities of $\Lambda(B)$.

\begin{remark}[Closedness is not necessary]
\label{rem:closure-ok}
Pointwise sufficiency and Algorithm~\ref{alg:pointwise-cp} interact with the prior only through
containment tests of the form $\C_k\subseteq \Lambda(B)$.
Since $\Lambda(B)$ is closed, $\C_k\subseteq \Lambda(B)$ holds iff $\cl(\C_k)\subseteq \Lambda(B)$, where $\cl(\cdot)$ denotes closure.
Thus, for these containment tests, one may replace a (possibly nonclosed) fiber by its closure
without changing any certification outcome.
Moreover, any violating witness $c_j^{\mathrm{out}}\in \C_k\setminus\Lambda(B)$ (whenever it exists) remains valid 
after this replacement.
\end{remark}

The facet-hit rule identifies the first facet of $\Lambda(B)$ encountered along the segment $[c^{\mathrm{in}},c^{\mathrm{out}}]$, and thus guarantees the existence of a boundary point
$c^{\mathrm{hit}}\in\C_k\cap\Lambda(B)$ with $(c^{\mathrm{hit}})^\top\delta(B,j^\star)=0$ (Lemma~\ref{lem:query-relevant}),
which is what makes the new query direction decision-relevant. A concrete counterexample showing that an arbitrary violated
facet can fail is given below:

\begin{example}[A counterexample motivating the facet-hit rule]
\label{ex:why-facet-hit}
    Let $\X=[0,1]^2$ and consider the vertex $x=(0,0)$, whose optimality cone is $\Lambda=\{c\in\RR^2:\ c_1\ge 0,\ c_2\ge 0\}$, with facet hyperplanes $c_1=0$ and $c_2=0$.
Let $\varepsilon\in(0,1)$ and define $c^{\mathrm{in}}=(1,\varepsilon)$ and $c^{\mathrm{out}}=(-1,-1)$.
Consider any convex fiber $\C_k$ whose intersection with $\Lambda$ is the segment $\conv\{c^{\mathrm{in}},c^{\mathrm{out}}\}$
(e.g., take $\C_k$ to be exactly this segment). Then $c^{\mathrm{out}}$ violates both inequalities $c_1\ge 0$ and $c_2\ge 0$.
Along the segment $c_\alpha=(1-\alpha)c^{\mathrm{in}}+\alpha c^{\mathrm{out}}$, the coordinate $c_{2,\alpha}$ hits $0$ at a very small $\alpha$ while $c_{1,\alpha}$ is still strictly positive; thus $\C_k\cap\Lambda$ reaches the boundary only through the facet $c_2=0$.
In contrast, $c_\alpha$ intersects $c_1=0$ only after $c_{2,\alpha}$ has already become negative, i.e., outside $\Lambda$.
Therefore, querying the normal of the ``wrong'' violated facet ($c_1=0$) does not correspond to a boundary that the fiber can reach while keeping $x$ optimal.
The facet-hit rule avoids this issue by selecting the first facet reached from an interior anchor point.
\end{example}

\begin{remark}[Picking arbitrary $c^{\mathrm{in}} \in \C_k$]
\label{rem:alg1-query-only}
Algorithm~\ref{alg:pointwise-cp} is stated in a \emph{fixed-anchor} form (we set $c^{\mathrm{in}}:=c$ at each iteration).
The same facet-hit cutting-plane idea and all results in this section also apply when the cost vector $c$ is \emph{unknown} and one only has oracle access to inner products $q^\top c$:
in that case, line~4 may pick \emph{any} anchor $c^{\mathrm{in}}\in\C_k$, and then proceed identically.
\end{remark}

\begin{remark}[A deterministic tie-breaking convention]
\label{rem:tie-breaking}
Throughout Algorithms~\ref{alg:pointwise-cp},
whenever a choice is non-unique, we can further assume that ties are
resolved according to a fixed deterministic rule.
\end{remark}

% ------------------------------------------------------------
\subsection{Correctness and basic properties}
We now record a few basic facts about Algorithm~\ref{alg:pointwise-cp}.
The next two lemmas are technical but important for our later analysis in
Section~\ref{sec:ddr-compression}: Lemma~\ref{lem:query-relevant} shows that every new queried direction is genuinely decision-relevant. Lemma~\ref{lem:li-auto} guarantees that the dataset grows with linearly independent directions.
Theorem~\ref{thm:alg1-correct} shows the algorithm terminates and indeed certifies pointwise sufficiency.

\begin{lemma}
\label{lem:query-relevant}
In Algorithm~\ref{alg:pointwise-cp}, any newly added direction $q_{k+1}$ lies in $\Delta(\X,\C)\subseteq \dir(\X^\star(\C))$.
\end{lemma}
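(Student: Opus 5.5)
The plan is to exhibit a point $c^{\mathrm{hit}}\in\C\cap F(x(B),\delta_{j^\star})$ and to check that $\delta_{j^\star}=\delta(B,j^\star)$ is an extreme direction of $FD(x(B))$. This is exactly membership in $\Delta(\X,\C)$ by \eqref{def:Delta}. The inclusion $\Delta(\X,\C)\subseteq\dir(\X^\star(\C))$ then follows from \Cref{thm:bennouna-spanDelta}, which only needs convexity of $\C$ (Assumption~\ref{ass:C-convex}).

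\textbf{Step 1: extreme direction.} Under Assumption~\ref{ass:nondeg}, the feasible basis $B$ is nondegenerate: $x_B(B)=A_B^{-1}b>0$. Hence $FD(x(B))=\{\delta: A\delta=0,\ \delta_N\ge 0\}$. Writing $\delta=\sum_{j\in N}\delta_j\,\delta(B,j)$, this cone is the image of $\RR^{N}_{\ge0}$ under an injective linear map. Its extreme rays are therefore exactly the edge directions $\delta(B,j)$, $j\in N$, and in particular $\delta_{j^\star}\in D(x(B))$.

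\textbf{Step 2: locating the hit point.} The algorithm reaches the query step only when $m_{\min}<0$, so $c^{\mathrm{out}}\in\C_k\subseteq\C$ satisfies $(c^{\mathrm{out}})^\top\delta_{j_0}<0$ and the index set in the facet-hit rule is nonempty. Since $B$ is optimal for $c^{\mathrm{in}}=c$, we have $(c^{\mathrm{in}})^\top\delta_j\ge0$ for all $j\in N$, and each $\alpha_j\in[0,1)$ is well defined. Set
\[
c^{\mathrm{hit}}:=(1-\alpha^\star)c^{\mathrm{in}}+\alpha^\star c^{\mathrm{out}} .
\]
Both endpoints lie in the convex set $\C_k$, so $c^{\mathrm{hit}}\in\C_k\subseteq\C$. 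By the definition of $\alpha_{j^\star}$, we get $(c^{\mathrm{hit}})^\top\delta_{j^\star}=0$.

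\textbf{Step 3: $c^{\mathrm{hit}}\in\Lambda(B)$, the main point to verify.} I split the indices $j\in N$ into two cases. If $(c^{\mathrm{out}})^\top\delta_j\ge0$, then $(c_\alpha)^\top\delta_j$ is a convex combination of two nonnegative numbers, hence nonnegative for every $\alpha\in[0,1]$. If $(c^{\mathrm{out}})^\top\delta_j<0$, the affine function $\alpha\mapsto c_\alpha^\top\delta_j$ decreases from a nonnegative value and vanishes at $\alpha_j\ge\alpha^\star$, so it is nonnegative at $\alpha^\star$. Thus $c^{\mathrm{hit}}\in\Lambda(B)$ by \eqref{eq:cone-delta}, and $\Lambda(B)$ equals the optimality cone of $x(B)$.

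Combining the steps, $c^{\mathrm{hit}}\in F(x(B),\delta_{j^\star})\cap\C$, which shows $q_{k+1}=\delta_{j^\star}\in\Delta(\X,\C)$. This is the step where the minimality of $\alpha^\star$ is essential, and it is exactly what Example~\ref{ex:why-facet-hit} shows can fail for an arbitrary violated facet.
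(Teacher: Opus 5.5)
Your proposal is correct and follows essentially the same route as the paper: you take $c^{\mathrm{hit}}=c_{\alpha^\star}$ on the segment $[c^{\mathrm{in}},c^{\mathrm{out}}]\subseteq\C_k$, use the first-hit property to place it in $\C\cap\Lambda(B)\cap\{\delta(B,j^\star)\}^\perp$, and conclude via \Cref{thm:bennouna-spanDelta}. You are more careful than the paper, which passes through $\Delta(\X,\C_k)\subseteq\Delta(\X,\C)$ and simply asserts both the first-hit inequalities and that $\delta(B,j^\star)$ is an extreme direction of $FD(x(B))$; your Steps~1 and~3 supply these two verifications, and Step~1 is where Assumption~\ref{ass:nondeg} is genuinely needed.
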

\begin{proof}
Because $B$ is an optimal basis for $c^{\mathrm{in}}\in\C_k$, we have $c^{\mathrm{in}}\in\Lambda(B)$, i.e.
$(c^{\mathrm{in}})^\top\delta(B,j)\ge 0$ for all $j\in N$.
Since we are in the \texttt{Else} branch, there exists $c^{\mathrm{out}}\in\C_k$ with
$(c^{\mathrm{out}})^\top\delta(B,j_0)=m_{\min}<0$ for at least one $j_0$.
Define the segment $c_\alpha := (1-\alpha)c^{\mathrm{in}}+\alpha c^{\mathrm{out}}$, $\alpha\in[0,1]$.
Because $\C_k$ is convex, $c_\alpha\in\C_k$ for all $\alpha\in[0,1]$.

By construction of $\alpha^\star$ and $j^\star$, we have $(c_{\alpha^\star})^\top\delta(B,j^\star)=0$
and $(c_{\alpha^\star})^\top\delta(B,j)\ge 0$ for all $j\in N$ (first-hit property).
Thus $c^{\mathrm{hit}}:=c_{\alpha^\star}\in \C_k\cap \Lambda(B)$ and lies on the face
$\Lambda(B)\cap \{\delta(B,j^\star)\}^\perp$.

By \Cref{def:Delta} (with $x^\star=x(B)$ and $\C=\C_k$), this implies $\delta(B,j^\star)\in \Delta(\X,\C_k)$.
Since $\C_k\subseteq \C$, we also have $\Delta(\X,\C_k)\subseteq \Delta(\X,\C)$.
Finally, by \Cref{thm:bennouna-spanDelta},
\[
\delta(B,j^\star)\in \spanop\Delta(\X,\C)=\dir(\X^\star(\C)),
\]
and in particular $\delta(B,j^\star)\in\dir(\X^\star(\C))$.
\end{proof}

\begin{lemma}
\label{lem:li-auto}
In Algorithm~\ref{alg:pointwise-cp}, the queried directions are linearly independent. In particular, the algorithm makes at most $\dstar$ queries.
\end{lemma}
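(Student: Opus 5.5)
The plan is to show that each new query $q_{k+1}=\delta(B,j^\star)$ lies outside $\spanop\{q_1,\dots,q_k\}$. This is the span of the columns of $Q_k$, and we argue by contradiction.

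Suppose $q_{k+1}=Q_k\lambda$ for some $\lambda\in\RR^k$. Every $c'\in\C_k$ satisfies $Q_k^\top c'=s_k=Q_k^\top c$. Hence
\[
(c')^\top q_{k+1}=\lambda^\top Q_k^\top c'=\lambda^\top s_k,
\]
so the linear functional $c'\mapsto (c')^\top\delta(B,j^\star)$ is constant on the fiber $\C_k$.

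We now evaluate this functional at two points of the fiber produced in the \texttt{Else} branch. By the facet-hit construction (as in the proof of Lemma~\ref{lem:query-relevant}), the index $j^\star$ is one for which $(c^{\mathrm{out}})^\top\delta_{j^\star}<0$, because $\alpha_j$ is only defined for such $j$. On the other hand, the anchor point satisfies $(c^{\mathrm{in}})^\top\delta_{j^\star}\ge 0$, since $c^{\mathrm{in}}\in\Lambda(B)$ when $B$ is optimal for $c^{\mathrm{in}}$. Both $c^{\mathrm{in}}=c$ and $c^{\mathrm{out}}$ lie in $\C_k$, so the functional takes a nonnegative value at one point of the fiber and a strictly negative value at another. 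This contradicts constancy, and therefore $q_{k+1}\notin\spanop(Q_k)$.

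By induction over the iterations, the queries added by the algorithm are linearly independent. If $\Dset_{\mathrm{init}}$ is itself linearly independent, or simply empty, the same holds for the whole dataset. Some care is needed if $\Dset_{\mathrm{init}}$ is arbitrary: the argument above still shows that every new query is independent of everything already in the dataset. In that case I would state the claim as applying to the newly queried directions together with a basis of the initial span.

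For the bound on the number of queries, Lemma~\ref{lem:query-relevant} places every new query in $\Wstar=\dir(\X^\star(\C))$, which has dimension $\dstar$. New queries are linearly independent of the span of all previous queries, and in particular of one another. Since they all lie in $\Wstar$, there can be at most $\dstar$ of them. Consequently the while-loop takes the \texttt{Else} branch at most $\dstar$ times.

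The only delicate point is the attainment of the argmin defining $c^{\mathrm{out}}$. If the minimum is not attained, any witness in $\C_k$ with a negative value on $\delta_{j^\star}$ suffices (Remark~\ref{rem:closure-ok}), so the argument above is unaffected.
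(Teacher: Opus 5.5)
Your proof is correct and takes essentially the same route as the paper. You assume $q_{k+1}\in\spanop(Q_k)$, deduce that $c'\mapsto (c')^\top q_{k+1}$ is constant on $\C_k$, contradict this with $(c^{\mathrm{in}})^\top q_{k+1}\ge 0>(c^{\mathrm{out}})^\top q_{k+1}$, and then invoke Lemma~\ref{lem:query-relevant} to place every query in $\Wstar$ for the $\dstar$ bound; your remarks on a nonempty $\Dset_{\mathrm{init}}$ and on a non-attained minimum are sensible refinements that the paper leaves implicit.
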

\begin{proof}
Assume for contradiction that $q_{k+1}\in \spanop(Q_k)$.
Then $(q_{k+1})^\top c'$ is constant over $\C_k=\{c'\in\C:\ Q_k^\top c'=s_k\}$.
In particular, $(q_{k+1})^\top c^{\mathrm{in}}=(q_{k+1})^\top c^{\mathrm{out}}$.
But $c^{\mathrm{in}}\in\Lambda(B)$ implies $(q_{k+1})^\top c^{\mathrm{in}}\ge 0$,
while the facet-hit rule guarantees $(q_{k+1})^\top c^{\mathrm{out}}<0$.
This contradiction shows $q_{k+1}\notin \spanop(Q_k)$, and therefore $\rank(Q_{k+1})=\rank(Q_k)+1$. Moreover, since $\dstar=\dim(\dir(\X^\star(\C)))$ and $q_{k+1}\in \dir(\X^\star(\C))$ by \Cref{lem:query-relevant}, the algorithm makes at most $\dstar$ queries.
\end{proof}

\begin{theorem}[Correctness]
\label{thm:alg1-correct}
Algorithm~\ref{alg:pointwise-cp} terminates after at most $d^\star+1$ iterations and returns a dataset $\Dset$ that is pointwise sufficient at the (possibly unknown) $c$.
\end{theorem}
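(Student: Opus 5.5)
The argument has two parts. Termination follows from the bound on the number of queries. Correctness follows from the return condition.

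\emph{Termination.} Each pass through the \texttt{else} branch appends exactly one query $q_{k+1}$. By \Cref{lem:li-auto}, the queries added by the algorithm are linearly independent of the current query set, and by \Cref{lem:query-relevant} each lies in $\dir(\X^\star(\C))$, whose dimension is $\dstar$. So the \texttt{else} branch runs at most $\dstar$ times. Every other iteration ends in the return branch, so the loop executes at most $\dstar+1$ times.

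When $\Dset_{\mathrm{init}}\neq\varnothing$, the initial queries need not lie in $\Wstar$. I would handle this with a projected counting argument. The new queries are independent modulo $\spanop(\Dset_{\mathrm{init}})$, because each new query is not in the span of all previous queries. They also lie in $\Wstar$. Hence their number is at most $\dim\Wstar$, and the bound survives.

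Each iteration is well defined. The LP at line 5 has an optimal basis because $\X$ is a nonempty bounded polytope, and by Assumption~\ref{ass:nondeg} the optimal vertex determines $B$ uniquely. The subproblems $\mathrm{FI}(\delta_j;\C_k)$ are minimizations over a nonempty convex set, since $c\in\C_k$. If an infimum is not attained, Remark~\ref{rem:closure-ok} lets us pass to closures, or use any violating witness. The facet-hit step is well defined because $c^{\mathrm{out}}$ violates at least the constraint $j_0$.

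\emph{Correctness at return.} The algorithm returns only when $m_{\min}\ge 0$, that is, when $\inf_{c'\in\C_k}(c')^\top\delta(B,j)\ge 0$ for every $j\in N$. By \eqref{eq:cone-delta}, this says $\C_k\subseteq\Lambda(B)$. Under nondegeneracy, $\Lambda(B)$ is exactly the optimality cone of $x(B)$, so $x(B)\in\X^\star(c')$ for every $c'\in\C_k=\C(\Dset,s(c;\Dset))$. This is precisely Definition~\ref{def:pointwise-sdd} with $x^\star=x(B)$.

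\emph{Main obstacle.} The only delicate point is the identification of $\Lambda(B)$ with the set of costs for which $x(B)$ is optimal. This uses the reduced-cost characterization: $c^\top\delta(B,j)=c_j-c_B^\top A_B^{-1}A_j$ is the reduced cost of $j$. Nondegeneracy ensures that every edge direction $\delta(B,j)$ is feasible at $x(B)$, since $x_B(B)>0$. The directions $\delta(B,j)$ generate $FD(x(B))$, so nonnegativity of all reduced costs is both necessary and sufficient for optimality. I would cite this as the standard LP fact underlying \eqref{eq:cone-delta}.
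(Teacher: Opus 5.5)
Your proposal is correct and follows the paper's proof: pointwise sufficiency comes from $m_{\min}\ge 0$, which gives $\C_k\subseteq\Lambda(B)$ via \eqref{eq:cone-delta}, and termination comes from \Cref{lem:li-auto} and \Cref{lem:query-relevant}, which cap the number of \texttt{else} passes at $\dstar$. Your additional points are details the paper leaves implicit rather than a different route: the independence of the new queries modulo $\spanop(\Dset_{\mathrm{init}})$ for a nonempty warm start, the well-definedness of each step, and the reduced-cost justification of \eqref{eq:cone-delta} under nondegeneracy.
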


\begin{proof}
At termination we have $m_{\min}\ge 0$, hence for every $j\in N$,
\[
\min_{c'\in\C_k} (c')^\top \delta(B,j)\ \ge\ 0
\quad\Rightarrow\quad
(c')^\top\delta(B,j)\ge 0\ \ \forall c'\in\C_k.
\]
By \eqref{eq:cone-delta}, this implies $\C_k\subseteq \Lambda(B)$.
Therefore the fixed decision $x(B)$ is optimal for every $c'\in\C_k$, where $\C_k$ is the fiber associated with the current dataset $\Dset$, so $\Dset$ is pointwise
sufficient at $c$.

By \Cref{lem:li-auto}, the algorithm makes at most $\dstar$ new queries. Since each non-terminating iteration adds exactly one new query direction, the \texttt{while} loop executes at most $\dstar+1$ iterations.
\end{proof}

\begin{property}
\label{prop:polytime}
Assume that $\C$ is either (i) a polytope given in $H$-representation, or (ii) an ellipsoid.
Then Algorithm~\ref{alg:pointwise-cp} can be implemented to run in time \textbf{polynomial} in the input size.
In particular, it makes at most $d^\star$ oracle queries of the form $q^\top c$ and solves at most $(d^\star+1)$ LPs over $\X$ and
$(d^\star+1)(d-m)$ face-intersection subproblems (LPs in case~(i), and closed form in case~(ii)).
\end{property}

\section{Learning from Distributional Data}
\label{sec:ddr-compression}

Section~\ref{sec:cuttingplanes} constructs a pointwise-sufficient query set for a single cost vector $c$.
We now move to a data-driven regime in which the LP~\eqref{eq:lp} is solved repeatedly with random  $c\sim P_c$ supported on $\C$, and we observe i.i.d.\ samples $c_1,\dots,c_n$.
Our goal is to learn a \emph{compressed, decision-sufficient representation}: a small set of query
directions that are pointwise sufficient for a fresh draw from $P_c$ with high probability, together
with a \emph{distribution-free certificate} on its failure probability.
For a given dataset $\Dset$ as query directions, define the \emph{0--1 sufficiency loss}
\begin{equation}
\label{eq:01loss}
\ell(\Dset,c)\;:=\;\mathbf{1}\{\Dset \text{ is not pointwise sufficient at } c\}.
\end{equation}
We aim to output a dataset $\Dset$ with small \emph{risk} $R(\Dset):=\PP_{c\sim P_c}[\ell(\Dset,c)=1].$

% ------------------------------------------------------------
\subsection{A cumulative algorithm}
We now run the pointwise cutting-plane routine sequentially on each training sample $c_i$ and \emph{accumulate} the queried directions.
Algorithm~\ref{alg:cumulative} initializes with an empty dataset and, for $i=1,\dots,n$, invokes Algorithm~\ref{alg:pointwise-cp} on $c_i$
using the current dataset as a warm start. If $c_i$ is already certified by the current dataset, nothing changes; otherwise, new directions
are added until $c_i$ becomes pointwise sufficient. We call an index $i$ \emph{hard} if processing $c_i$ adds at least one new direction, i.e., $\Dset_i\neq \Dset_{i-1}$.

\begin{algorithm}[h]
\caption{Learning sufficient decision datasets over samples}
\label{alg:cumulative}
\small
\begin{algorithmic}[1]
\REQUIRE Prior $\C$, LP data $(A,b)$, i.i.d.\ samples $c_1,\dots,c_n$ (via oracle access to $q^\top c_i$).
\STATE Initialize dataset $\Dset_0 \gets \emptyset$, hard index set $T\gets\emptyset$.
\FOR{$i=1$ to $n$}
    \STATE Run Algorithm~\ref{alg:pointwise-cp} on $c_i$ with initialization $\Dset_{\mathrm{init}}=\Dset_{i-1}$.
    Let the returned dataset be $\Dset_i$.
    \IF{$\Dset_i \neq \Dset_{i-1}$}
        \STATE Mark $i$ as hard: $T \gets T\cup\{i\}$.
    \ENDIF
\ENDFOR
\STATE \textbf{Return} final dataset $\Dset_n$ and hard set $T$.
\end{algorithmic}
\end{algorithm}

The next three lemmas formalize the learning-theoretic structure underlying this cumulative procedure.
Together, they show that Algorithm~\ref{alg:cumulative} induces a \emph{stable, realizable} sample
compression scheme \citep[Definitions~7--8]{hanneke2021stablecompression} of compression size at most $d^\star$.
This is the key mechanism we use in the next subsection to obtain a distribution-free fast-rate
certificate on the true failure probability $R(\Dset_n)=\Pr_{c\sim P_c}[\ell(\Dset_n,c)=1]$.

\begin{lemma}[Realizability]
\label{lem:zero-empirical}
Algorithm~\ref{alg:cumulative} returns $\Dset_n$ with $\ell(\Dset_n,c_i)=0$ for all $i=1,\dots,n$.
\end{lemma}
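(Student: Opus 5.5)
The plan is to combine two facts already established: each call to Algorithm~\ref{alg:pointwise-cp} returns a pointwise-sufficient dataset (Theorem~\ref{thm:alg1-correct}), and pointwise sufficiency is preserved when the dataset grows (Property~\ref{prop:pointwise-basic}(i)). The argument runs over the nested chain $\Dset_0\subseteq\Dset_1\subseteq\cdots\subseteq\Dset_n$.

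\emph{Step 1 (nestedness).} Algorithm~\ref{alg:cumulative} calls Algorithm~\ref{alg:pointwise-cp} with $\Dset_{\mathrm{init}}=\Dset_{i-1}$. That routine only ever executes $\Dset\gets\Dset\cup\{q_{k+1}\}$ and never deletes a direction, so its output satisfies $\Dset_{i-1}\subseteq\Dset_i$. By induction, $\Dset_i\subseteq\Dset_n$ for every $i\le n$.

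\emph{Step 2 (each sample is certified when processed).} Apply Theorem~\ref{thm:alg1-correct} to the call on $c_i$. Two points need checking. First, the theorem holds for an arbitrary initial dataset. The termination bound still works because Lemma~\ref{lem:li-auto} shows each new query lies outside $\spanop(Q_k)$, where $Q_k$ includes the warm-start directions. Each new query also lies in $\dir(\X^\star(\C))$ by Lemma~\ref{lem:query-relevant}, so at most $d^\star$ new queries can be added. Second, the certification argument only uses that the loop exits with $m_{\min}\ge 0$, which gives $\C(\Dset_i,s(c_i;\Dset_i))\subseteq\Lambda(B)$. Hence $\Dset_i$ is pointwise sufficient at $c_i$, i.e.\ $\ell(\Dset_i,c_i)=0$.

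\emph{Step 3 (monotonicity).} Since $\Dset_i\subseteq\Dset_n$, Property~\ref{prop:pointwise-basic}(i) gives that $\Dset_n$ is pointwise sufficient at $c_i$, i.e.\ $\ell(\Dset_n,c_i)=0$ for every $i$.

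The main care point is Step 2: the warm start must not break termination. The possible worry is that the warm-start directions include directions outside $\dir(\X^\star(\C))$. This is harmless, because linear independence is measured relative to the full current $Q_k$, and the new queries span a subspace of $\dir(\X^\star(\C))$ that meets $\spanop(\Dset_{i-1})$ only in the directions already there. So the loop still halts after finitely many iterations, at most $d^\star+1$. Finally, oracle access to $q^\top c_i$ suffices by Remark~\ref{rem:alg1-query-only}, so nothing changes when $c_i$ is known only through its measurements.
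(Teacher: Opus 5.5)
Your proof is correct and follows the paper's argument: the inner call of Algorithm~\ref{alg:pointwise-cp} certifies $c_i$, so $\ell(\Dset_i,c_i)=0$, and because later iterations only enlarge the dataset, Property~\ref{prop:pointwise-basic}(i) carries this over to $\Dset_n$. You also check that warm starts do not break termination, using Lemmas~\ref{lem:li-auto} and~\ref{lem:query-relevant} with independence measured against the full current $Q_k$; the paper leaves this point implicit here, and your treatment of it is correct.
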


\begin{proof}
When processing $c_i$, the inner run of Algorithm~\ref{alg:pointwise-cp} certifies pointwise sufficiency for $c_i$,
so $\ell(\Dset_i,c_i)=0$.
For $t>i$, the cumulative procedure only enlarges the dataset, $\Dset_t\supseteq \Dset_i$, and
\Cref{prop:pointwise-basic}(i) implies $\ell(\Dset_t,c_i)=0$ for all later $t$, hence also at $t=n$.
\end{proof}

\begin{lemma}[Stability]
\label{lem:depend-only-on-T}
The final dataset $\Dset_n$ returned by Algorithm~\ref{alg:cumulative} is fully determined by the compressed subsequence
$(c_i)_{i\in T}$, independent of the remaining samples.
\end{lemma}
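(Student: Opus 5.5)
The plan is to prove that running Algorithm~\ref{alg:cumulative} on the subsequence $(c_i)_{i\in T}$ alone, in its original order, produces exactly the same sequence of datasets at the hard steps, and hence the same $\Dset_n$. Two facts drive this.

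First, a non-hard index changes nothing: if $i\notin T$, then $\Dset_i=\Dset_{i-1}$ by the definition of hardness. Second, the output of Algorithm~\ref{alg:pointwise-cp} on input $(c,\Dset_{\mathrm{init}})$ is a deterministic function of that pair. Every step of the algorithm depends only on $(A,b)$, $\C$, $c$ and the current query set. Concretely, it forms the fiber $\C_k$ from $Q_k$ and $s_k=Q_k^\top c$, solves the LP at $c^{\mathrm{in}}=c$, solves the FI subproblems, and applies the facet-hit rule. Every non-unique choice (optimal basis, $\arg\min$ in FI, $j_0$, $j^\star$) is resolved by the fixed deterministic rule of Remark~\ref{rem:tie-breaking}. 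So I would write $\Dset_i=\Phi(c_i,\Dset_{i-1})$ for a fixed map $\Phi$.

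The argument is then an induction. Let $T=\{i_1<\dots<i_r\}$, and let $\Dset'_0=\varnothing$, $\Dset'_j=\Phi(c_{i_j},\Dset'_{j-1})$ be the run on the compressed subsequence. I claim that $\Dset'_j=\Dset_{i_j}$ for every $j$.
\begin{itemize}
\item \emph{Base case.} For $i<i_1$ every step is non-hard, so $\Dset_{i_1-1}=\Dset_0=\varnothing=\Dset'_0$.
\item \emph{Inductive step.} Between consecutive hard indices the dataset is constant, so $\Dset_{i_j-1}=\Dset_{i_{j-1}}=\Dset'_{j-1}$. Applying $\Phi$ to $c_{i_j}$ on both sides gives $\Dset_{i_j}=\Dset'_j$.
\item \emph{Conclusion.} After $i_r$ every step is again non-hard, so $\Dset_n=\Dset_{i_r}=\Dset'_r$. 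This is a function of $(c_i)_{i\in T}$ only. If $T=\varnothing$, then $\Dset_n=\varnothing$.
\end{itemize}

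For the stable-compression notion of Hanneke and Kontorovich, I would add one more observation. The same reconstruction holds for any subsequence $S$ with $T\subseteq S\subseteq[n]$. Each $c_i$ with $i\in S\setminus T$ is certified by the current dataset, which equals the one in the full run. Certification can be taken to mean that Algorithm~\ref{alg:pointwise-cp} returns its input unchanged, so removing non-hard samples never alters the output.

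The main obstacle is the determinism claim, specifically the step of justifying that a non-hard sample in the reduced run still leaves the dataset unchanged. The output of Algorithm~\ref{alg:pointwise-cp} depends on $c$ only through the oracle values $q^\top c$ and the anchor, and both are fixed by the sample itself. The tie-breaking remark removes any residual nondeterminism. I would state this explicitly rather than grind through the algorithm line by line.
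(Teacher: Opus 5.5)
Your proposal is correct and follows the paper's own argument. Both use the determinism of Algorithm~\ref{alg:pointwise-cp} under the tie-breaking convention, together with the fact that a non-hard iteration returns $\Dset_{i-1}$ unchanged, to conclude that deleting non-hard iterations preserves the dataset entering each hard iteration and hence $\Dset_n$. Your explicit induction with the map $\Phi$, and your remark about intermediate subsequences $T\subseteq S\subseteq[n]$, make precise what the paper states in prose; the latter is the stability property the paper invokes in the proof of Theorem~\ref{thm:certificate}.
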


\begin{proof}
By the deterministic tie-breaking convention (\cref{rem:tie-breaking}), Algorithm~\ref{alg:pointwise-cp}
defines a deterministic update map, and hence Algorithm~\ref{alg:cumulative}
is a deterministic function of the sample sequence.

In Algorithm~\ref{alg:cumulative}, the dataset only changes at indices in $T$ by definition.
Removing an index $t\notin T$ removes an iteration that would have run
Algorithm~\ref{alg:pointwise-cp} with initialization $\Dset_{t-1}$ and returned the same dataset
$\Dset_t=\Dset_{t-1}$.
Therefore, deleting all non-hard iterations leaves the dataset entering each hard iteration unchanged,
and thus reproduces the same sequence of dataset updates and the same final dataset.
\end{proof}

\begin{lemma}[Compression size bound]
\label{lem:hard-bound}
Under Assumption~\ref{ass:nondeg}, $|T|\le |\Dset_n|\le d^\star$.
\end{lemma}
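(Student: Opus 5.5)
The proof splits into two inequalities, $|T|\le|\Dset_n|$ and $|\Dset_n|\le d^\star$; each follows from bookkeeping over the cumulative run plus the lemmas already proved for Algorithm~\ref{alg:pointwise-cp}.

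For $|T|\le|\Dset_n|$, I use that Algorithm~\ref{alg:cumulative} only ever enlarges the dataset: $\Dset_i\supseteq\Dset_{i-1}$ for every $i$, since Algorithm~\ref{alg:pointwise-cp} initializes at $\Dset_{i-1}$ and only appends directions. An index $i$ is hard exactly when $\Dset_i\neq\Dset_{i-1}$. Together with monotonicity this gives $|\Dset_i|\ge|\Dset_{i-1}|+1$ at hard indices. Telescoping from $\Dset_0=\emptyset$ yields $|\Dset_n|\ge|T|$.

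For $|\Dset_n|\le d^\star$, the plan is to show that all directions accumulated across the entire run are linearly independent and lie in $\Wstar=\dir(\X^\star(\C))$. The one subtlety is that Lemmas~\ref{lem:query-relevant} and~\ref{lem:li-auto} are stated for a single run of Algorithm~\ref{alg:pointwise-cp}, while here each run starts from a warm dataset $\Dset_{\mathrm{init}}=\Dset_{i-1}$. I will check that their proofs never use $\Dset_{\mathrm{init}}=\emptyset$.
\begin{itemize}
\item In Lemma~\ref{lem:query-relevant}, the fiber is $\C_k=\{c'\in\C: Q_k^\top c'=Q_k^\top c_i\}$, where $Q_k$ includes the inherited directions. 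It is convex by Assumption~\ref{ass:C-convex}, it contains $c^{\mathrm{in}}=c_i$, and $\C_k\subseteq\C$. So the facet-hit point $c^{\mathrm{hit}}$ places $\delta(B,j^\star)$ in $\Delta(\X,\C_k)\subseteq\Delta(\X,\C)$. By Theorem~\ref{thm:bennouna-spanDelta} it then lies in $\Wstar$.
\item In Lemma~\ref{lem:li-auto}, the argument shows $q_{k+1}\notin\spanop(Q_k)$, again with $Q_k$ including everything inherited. It uses only that $q_{k+1}^\top c'$ would be constant on $\C_k$, against $q_{k+1}^\top c^{\mathrm{in}}\ge0>q_{k+1}^\top c^{\mathrm{out}}$.
\end{itemize}
Assumption~\ref{ass:nondeg} enters here: it guarantees that the optimal basis $B$ and its cone $\Lambda(B)$ from \eqref{eq:cone-delta} are well defined, so these lemmas apply. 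Consequently every added direction, across all samples, increases the rank of the accumulated matrix by one and stays inside $\Wstar$.

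It remains to conclude by induction on the order in which directions are added. The set $\Dset_n$ is then a linearly independent family contained in $\Wstar$, so $|\Dset_n|\le\dim\Wstar=d^\star$. The main obstacle is the warm-start check above, and in particular confirming that the inherited directions are themselves in $\Wstar$. The induction handles this, since $\Dset_0=\emptyset$ and every direction was added by some earlier facet-hit step.
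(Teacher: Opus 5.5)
Your proof is correct and takes essentially the same route as the paper. The paper also obtains $|T|\le|\Dset_n|$ because each hard index contributes at least one new direction, and obtains $|\Dset_n|\le d^\star$ by combining linear independence across warm-started runs (by induction from $\Dset_0=\emptyset$) with Lemma~\ref{lem:query-relevant}, which places every appended direction in $\dir(\X^\star(\C))$; your explicit check that the proofs of Lemmas~\ref{lem:query-relevant} and~\ref{lem:li-auto} never use an empty initialization makes precise a point the paper leaves implicit.
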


\begin{proof}
By Lemma~\ref{lem:li-auto}, each time Algorithm~\ref{alg:pointwise-cp} appends a new query,
that direction is linearly independent of the previously queried directions in that run.
Since Algorithm~\ref{alg:cumulative} warm-starts each run at $\Dset_{i-1}$, this implies by induction over
$i=1,\dots,n$ that the cumulative datasets $\Dset_i$ remain linearly independent.

By definition, $i\in T$ implies $\Dset_i\neq \Dset_{i-1}$, hence at least one new independent direction
was added while processing $c_i$, so $|T|\le |\Dset_n|$.

Finally, by Lemma~\ref{lem:query-relevant}, every query direction that Algorithm~\ref{alg:pointwise-cp} can append
lies in $\dir(\X^\star(\C))$. Since $\Dset_n$ is the union of all appended directions across the cumulative run, we have
$\Dset_n\subseteq \dir(\X^\star(\C))$.
Therefore, linear independence yields
$|\Dset_n|=\dim\spanop(\Dset_n)\le \dim\dir(\X^\star(\C))=d^\star$.
\end{proof}

% ------------------------------------------------------------
\subsection{A distribution-free certificate via stable compression}
We can now invoke the clean stable-compression generalization
bound of \citet[Corollary~11]{hanneke2021stablecompression} to certify the true failure probability
$R(\Dset_n)=\PP_{c\sim P_c}[\ell(\Dset_n,c)=1]$.
For an alternative viewpoint, see \citealp{campigaratti2023compression,paccagnan2025p2l}.

\begin{theorem}[Certificate via stable compression]
\label{thm:certificate}
For any $\delta\in(0,1)$, with probability at least $1-\delta$ over the draw of $c_1,\dots,c_n$,
the output $(\Dset_n,T)$ of Algorithm~\ref{alg:cumulative} satisfies
\begin{equation}
\label{eq:certificate_explicit}
R(\Dset_n)\ \le\ \frac{4}{n}\Bigl(6|T|+\ln\frac{e}{\delta}\Bigr)\le\ \frac{4}{n}\Bigl(6d^\star+\ln\frac{e}{\delta}\Bigr),
\end{equation}
where the second inequality uses $|T|\le d^\star$ from Lemma~\ref{lem:hard-bound}.
\end{theorem}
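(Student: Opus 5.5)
The plan is to cast Algorithm~\ref{alg:cumulative} as a sample compression scheme in the sense of \citet[Definitions~7--8]{hanneke2021stablecompression}, check that the scheme is stable and realizable, and then read off \eqref{eq:certificate_explicit} from their Corollary~11.

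\textbf{Step 1: define the scheme.} The compression function maps a sample $S=(c_1,\dots,c_n)$ to the subsequence $\kappa(S):=(c_i)_{i\in T}$, kept in its original order. The reconstruction function $\rho$ runs Algorithm~\ref{alg:cumulative} on an input sequence and outputs the final dataset. The hypothesis class is the induced family of 0--1 sufficiency losses $c\mapsto \ell(\Dset,c)$. Because the certificate only needs $\ell$ at fresh draws, the labels are implicit: every example has target label $0$ (``sufficient'').

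\textbf{Step 2: reconstruction and realizability.} By Lemma~\ref{lem:depend-only-on-T}, $\rho(\kappa(S))=\Dset_n$. By Lemma~\ref{lem:zero-empirical}, $\ell(\Dset_n,c_i)=0$ for every $i$. Together these give correct reconstruction with zero empirical loss on the whole sample, which is realizability.

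\textbf{Step 3: stability.} This is the main obstacle, because Hanneke--Kontorovich stability asks for more than Lemma~\ref{lem:depend-only-on-T}. Their condition is that for every $S'$ with $\kappa(S)\subseteq S'\subseteq S$, we have $\kappa(S')=\kappa(S)$ (or at least $\rho(\kappa(S'))=\rho(\kappa(S))$).

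I would prove this by induction along the order of $S'$. Run the algorithm on $S'$. At each non-hard index $t\in S'$, the current dataset equals the one that appeared in the full run just before $t$. The reason is that all earlier hard indices are present in $S'$ and every removed index was non-hard, so it left the dataset unchanged. Since $t$ was certified in the full run with that same dataset, it is certified again and stays non-hard. At each hard index, the warm start $\Dset_{\mathrm{init}}$ is identical to the full run's, so the deterministic tie-breaking convention (\cref{rem:tie-breaking}) makes Algorithm~\ref{alg:pointwise-cp} produce the same additions. Hence $T$ and $\Dset_n$ are preserved.

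The delicate point is that membership in $T$ depends only on the current dataset and $c_t$, which holds because Algorithm~\ref{alg:pointwise-cp} is a deterministic map $(\Dset_{\mathrm{init}},c)\mapsto\Dset$.

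\textbf{Step 4: apply the bound.} The scheme has data-dependent size $|T|$, so I would invoke Corollary~11 of \citet{hanneke2021stablecompression}, which handles this case. It gives, with probability at least $1-\delta$,
\[
R(\Dset_n)\le \tfrac{4}{n}\bigl(6|T|+\ln(e/\delta)\bigr).
\]
If their statement is phrased with a fixed size bound $k$, I would instead apply it with $k=d^\star$, which is valid deterministically by Lemma~\ref{lem:hard-bound}. Lemma~\ref{lem:hard-bound} also gives the second inequality in \eqref{eq:certificate_explicit}.

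Two measurability and i.i.d.\ details should be noted along the way. First, the draws are i.i.d.\ from $P_c$. Second, the loss $\ell(\Dset,\cdot)$ is a fixed measurable indicator for each $\Dset$; this holds because pointwise sufficiency is a finite union over bases of convex containment conditions.
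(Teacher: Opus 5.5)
Your proposal is correct and takes essentially the same route as the paper. The paper likewise uses the hard-sample subsequence $(c_i)_{i\in T}$ as the compression set, reconstructs by rerunning Algorithm~\ref{alg:cumulative} on it, gets zero empirical loss from \Cref{lem:zero-empirical}, applies Corollary~11 of \citet{hanneke2021stablecompression} with the data-dependent size $|T|$, and then uses \Cref{lem:hard-bound} for the second inequality. The one difference is that your Step~3 spells out the stability check, showing in fact that $\kappa(S')=\kappa(S)$ whenever $\kappa(S)\subseteq S'\subseteq S$ because Algorithm~\ref{alg:pointwise-cp} is a deterministic warm-start map; the paper compresses this into a one-line appeal to \Cref{lem:depend-only-on-T}.
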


\begin{proof}
We apply the stable sample compression bound of \citet[Corollary~11]{hanneke2021stablecompression}.

To be specific, we view any queried dataset $\Dset$ as inducing a binary prediction rule
$h_\Dset:\mathcal C\to\{0,1\}$ defined by $h_\Dset(c):=\ell(\Dset,c)\in \{0,1\}$.
Thus $R(\Dset)=\Pr_{C\sim P_c}[h_\Dset(C)=1]=\Pr_{C\sim P_c}[\ell(\Dset,C)=1]$
is exactly the associated $0$--$1$ risk.
Equivalently, one may regard this as a supervised distribution over $(C,Y)$
with $C\sim P_c$ and $Y=0$ almost surely. Let $S=(c_1,\dots,c_n)$ be the training sequence and let $(\Dset_n,T)$ be the output of Algorithm~\ref{alg:cumulative} on $S$.
Define a compression function $\kappa$ by letting $\kappa(S)$ be the subsequence of {hard} samples $(c_i)_{i\in T}$ (in their original order).
Define a reconstruction function $\rho$ that maps any subsequence $S'$ to the prediction rule $h_{\Dset'}$ induced by the final dataset $\Dset'$
returned by running Algorithm~\ref{alg:cumulative} on $S'$.

By Lemma~\ref{lem:depend-only-on-T}, running Algorithm~\ref{alg:cumulative} on $\kappa(S)$ reproduces the same final dataset $\Dset_n$,
so $\rho(\kappa(S))=h_{\Dset_n}$.
Moreover, Lemma~\ref{lem:depend-only-on-T} also implies the \emph{stability} property of \citet[Definition~8]{hanneke2021stablecompression}:
removing any subset of the non-hard samples (i.e., elements of $S\setminus \kappa(S)$) does not affect the reconstructed output. Lemma~\ref{lem:zero-empirical} gives $\ell(\Dset_n,c_i)=0$ for all $i=1,\dots,n$, i.e., the empirical $0$--$1$ risk of $\rho(\kappa(S))=h_{\Dset_n}$
on $S$ is zero. Apply Corollary~11 of \citealp{hanneke2021stablecompression}, with probability at least $1-\delta$ over the draw of $S\sim P_c^n$,
\[
R(\Dset_n)\ =\ R(h_{\Dset_n})
\ =\ R(\rho(\kappa(S)))
\ \le\ \frac{4}{n}\Bigl(6|\kappa(S)|+\ln\frac{e}{\delta}\Bigr)
\ =\ \frac{4}{n}\Bigl(6|T|+\ln\frac{e}{\delta}\Bigr).
\]
Finally, Lemma~\ref{lem:hard-bound} implies $|T|\le d^\star$, which gives the last sentence of \Cref{thm:certificate}.
\end{proof}

Equation~\eqref{eq:certificate_explicit} gives the fast-rate certificate $R(\Dset_n)\le O\!\bigl((d^\star+\ln(1/\delta))/n\bigr)$. Recent data-driven projection approaches for LPs learn a low-dimensional embedding that preserves feasibility and objective values approximately and then control a downstream error via uniform-convergence-style analyses; this leads to the characteristic $1/\sqrt{n}$ dependence on sample size and complexity terms tied to the pseudo-dimension of the learned projection family \citep{balcan2024howmuchdata,sakaue2024datadrivenprojectionslp}.
Our guarantee is complementary. We target decision sufficiency (a binary property) and exploit polyhedral geometry to ensure only decision-relevant directions can ever be queried. This guarantees a stable compression scheme, yielding $\widetilde{O}(d^\star/n)$ certificates that depend on the intrinsic dimension $d^\star$. This fast rate is also characteristic of realizability; in agnostic settings where zero empirical loss is unattainable, compression-based methods are subject to $\Omega\!\bigl(\sqrt{k\log(n/k)/n}\bigr)$ lower bounds~\citep{hanneke19b}, where $k$ is the compression size.

Finally, it is natural to ask whether the dependence on $d^\star$ and $n$ can be improved.
The next theorem shows that, at least for the concrete cumulative procedure in Algorithm~\ref{alg:cumulative}, the fast-rate certificate is tight up to constants: one needs $n=\Omega(d^\star/\varepsilon)$ samples to drive the pointwise-sufficiency failure probability below $\varepsilon$ with constant confidence.
\begin{theorem}
\label{thm:lowerbound}
Fix any integer $d^\star\ge 2$ and any $\varepsilon\in(0,1/4)$.
There exist an ambient dimension $d\ge d^\star$, a nondegenerate LP polytope $\X\subseteq\RR^{d}$,
a convex uncertainty set $\C\subseteq\RR^{d}$ with $\dim(\dir(\X^\star(\C)))=d^\star$,
and a distribution $P_c$ supported on $\C$ such that the following holds. 
If $\Dset_n$ is the output of Algorithm~\ref{alg:cumulative} on $n$ i.i.d.\ samples from $P_c$ and $n \le \frac{d^\star-1}{8\varepsilon}$, then $\PP\!\left( R(\Dset_n) > \varepsilon \right) \ge \frac{1}{2}$.
\end{theorem}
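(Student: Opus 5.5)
We construct a coupon-collector-style hard instance in which the decision-relevant directions are \emph{disjoint} and each is revealed only by a rare ``hard'' event. Set $k:=d^\star$ and take $\X$ to be a product of $k$ independent nondegenerate one-dimensional pieces. For concreteness, use $\X=\{(x,y)\in\RR^{2k}: x_i+y_i=1,\ x,y\ge 0\}$, which is the cube $[0,1]^k$ in standard form with $m=k$ and $d=2k$. It is nondegenerate, since every vertex has exactly $k$ positive entries. The optimal decision in block $i$ depends only on the sign of $c_{x_i}-c_{y_i}$, and the edge direction in block $i$ is $\pm(e_{x_i}-e_{y_i})$.

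We choose $\C$ convex, for instance the box where each $c_{x_i}-c_{y_i}\in[-1,1]$ and the remaining coordinates are fixed. Then every vertex is reachable, so $\dir(\X^\star(\C))$ is spanned by the $k$ block directions and $\dim\dir(\X^\star(\C))=k$, as required. For $P_c$ we put mass $1-\sum_{i\ge 2}p$ on a base point $c^0$ with $c^0_{x_i}-c^0_{y_i}=1$ for all $i$, which lies in the interior of a single optimality cone. We put mass $p:=2\varepsilon$ on each of the $k-1$ points $c^{(i)}$ that equal $c^0$ except that block $i$ has difference $0$, i.e., lies on a facet. (If $k-1$ such atoms would exceed total mass $1$, we shrink $p$; since $\varepsilon<1/4$ and we only need $p\approx 2\varepsilon$, one coordinate may be reserved to keep masses valid. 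Alternatively we use $k-1$ atoms of mass $4\varepsilon/(k-1)\cdot$const; this accounting is adjusted at the end.)

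The key geometric step is to verify, via the containment test in Algorithm~\ref{alg:pointwise-cp}, which datasets are pointwise sufficient at each atom. At $c^{(i)}$ the fiber of $\Dset$ is $\C$ cut by $\Dset$'s constraints. Since $c^{(i)}$ sits on the facet of block $i$, the fiber straddles that facet unless $\Dset$ fixes the block-$i$ difference, i.e., unless $e_{x_i}-e_{y_i}\in\spanop(\Dset)$ modulo fixed coordinates. So $\ell(\Dset,c^{(i)})=1$ whenever direction $i$ has not been queried.

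Next we show that Algorithm~\ref{alg:cumulative} queries direction $i$ only upon seeing $c^{(i)}$. On $c^0$ with $\Dset$ lacking some directions, the fiber still contains all costs with block difference in $[-1,1]$, which crosses the facet, so this does fail. To fix this, we shrink $\C$ so that the block-$i$ difference ranges over $[0,1]$ only, which keeps $\C$ convex. Then $c^0$ is certified with $\Dset=\varnothing$, because the whole of $\C$ lies in one closed cone. At $c^{(i)}$ the fiber still touches the facet but also stays inside the cone, so it would be certified as well. We therefore instead let block $i$ range over $[-1,1]$ only when a ``switch'' coordinate is active, making $\C$ a convex hull. The cleaner route is this: $\C$ is the convex hull of $\{c^0\}$ together with the segments $\{c^0+t(e_{x_i}-e_{y_i})\cdot(-2): t\in[0,1]\}$ in lifted coordinates. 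We use an extra nonbasic-only coordinate per block as a tag, so that the fiber at $c^0$ and at $c^{(j)}$ with $j\ne i$ never reaches block $i$'s opposite side. This design step is the main obstacle: it must be arranged that direction $i$ is triggered exactly by atom $i$ while keeping $\C$ convex and $d^\star=k$. We take $d$ larger than $d^\star$ for this purpose.

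Given this structure, $\Dset_n$ misses direction $i$ iff atom $i$ never appears among the samples, and then $R(\Dset_n)\ge p\cdot\#\{\text{unseen atoms}\}$. Each atom is unseen with probability $(1-p)^n\ge 1-pn\ge 1-2\varepsilon\cdot\frac{k-1}{8\varepsilon}\cdot\frac1{(k-1)}\cdot$const. With $p=4\varepsilon/(k-1)\cdot$ suitably scaled and $n\le (k-1)/(8\varepsilon)$, the expected number of unseen atoms is at least $(k-1)/2$, and a Markov/Paley–Zygmund bound on the number of seen atoms, whose mean is at most $np(k-1)\le (k-1)/2$, gives at least $(k-1)/2$ unseen atoms with probability at least $1/2$. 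On that event the risk is at least $p(k-1)/2>\varepsilon$. Finally we calibrate $p$ to make these constants consistent.
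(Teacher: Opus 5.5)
Your high-level plan (rare types that each force a distinct decision-relevant direction, then a Markov bound on the number of rare samples) matches the paper's. However, there is a genuine gap exactly at the step you flag as the main obstacle. You never exhibit a convex $\C$ satisfying three requirements: (a) the common type is certified after spending at most one direction; (b) each rare atom $c^{(i)}$ fails pointwise sufficiency until $\delta_i$ is queried; and (c) Algorithm~\ref{alg:pointwise-cp} run on $c^{(i)}$ appends only $\delta_i$. Every concrete candidate you write down violates one of these.

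In the box, the fiber at the first (typically common) sample is all of $\C$ and crosses every facet. So Algorithm~\ref{alg:pointwise-cp} already queries all $\dstar$ directions there, and likewise at any rare atom, so (a) and (c) fail. The half-box destroys (b). Your convex hull of $c^0$ and the segments is a simplex with apex $c^0$. Each facet-hit query fixes one block difference at its value at $c^0$, which only deletes one vertex from the fiber, so again every block direction is queried on the first common sample. The tag coordinates are never specified, and they cannot help as described. A tag is measured only if it appears in some edge direction $\delta(B,j)$, and with $\Dset=\varnothing$ the fiber at the first sample is all of $\C$ however points are tagged.

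The missing idea is to choose a strictly convex $\C$ whose atoms are exposed points, so that one query of an atom's own direction collapses its fiber to a singleton. The paper takes $\C=\{(c,0):\|c-\mu\|_2\le 1\}$ with $\mu_i=0.99$ for $i\le\dstar$, and atoms $c^{(i)}=(\mu-e_i,0)$ on the sphere. Three facts then hold:
\begin{itemize}
\item Querying $\delta_i=(-e_i,e_i)$ pins $u_i=\mu_i-1$, which forces $u=\mu-e_i$.
\item If $\delta_i$ is missing and $\Dset$ consists of other $\delta_j$'s, the center $\bar\mu$ lies in the fiber of $c^{(i)}$ and has a different unique optimizer.
\item Since $\mu_i$ is close to the radius, every point of $\C\cap\Lambda(B(i))$ has $u_j>0$ for $j\neq i$. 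So the first facet hit from $c^{(i)}$ is facet $i$, and only $\delta_i$ is appended.
\end{itemize}
The common type is just $c^{(1)}$, which consumes $\delta_1$ once and is free afterwards. This is why there are $\dstar-1$ rare types, each of mass $2\varepsilon/(\dstar-1)$.

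Your final counting also needs this calibration. A mean of $(k-1)/2$ seen atoms makes Markov vacuous at threshold $(k-1)/2$. You need $\EE N=2\varepsilon n\le(\dstar-1)/4$ for the number $N$ of rare samples, which $n\le(\dstar-1)/(8\varepsilon)$ gives. Then $N<(\dstar-1)/2$ with probability at least $1/2$, leaving more than $(\dstar-1)/2$ unseen rare types and hence $R(\Dset_n)>\varepsilon$.
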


Proof of Theorem~\ref{thm:lowerbound} and additional technical details appear in Appendix~\ref{app:sec6}. At a high level, the proof follows the classical ``rare-types'' construction used to obtain lower bounds for realizable sample-compression schemes \citep{littlestone1986relating}.
However, we cannot simply import a generic compression lower bound as a black box, because here the compression map is not arbitrary: it must arise from LP optimality geometry through Algorithm~\ref{alg:pointwise-cp}.
Accordingly, we explicitly construct a linear program, a convex prior set $\C$ with $\dim(\dir(\X^\star(\C)))=d^\star$, and a distribution $P_c$ over $c$ such that each rare event forces the discovery of a distinct decision-relevant direction.

\section{Application: Model Compression for Contextual Linear Optimization}
\label{sec:clo_compression}

In this section, we illustrate how decision-sufficient representations yield a principled model-compression guarantee for contextual linear
optimization (CLO). Let $(\xi,c)\sim P$, where $\xi\in\Xi\subseteq\RR^p$ is a context and
$c\in\C\subseteq\RR^d$ (a.s.) is the cost vector of a downstream linear program over a known bounded polytope
$\X\subseteq\RR^d$. We assume that $\C$ is convex. Let $P_c$ denote the marginal distribution of~$c$. The goal in CLO is to train a predictor $f:\Xi\to\RR^d$ from contextual data so that the induced plug-in decisions $x^\star(f(\xi))$ achieve low out-of-sample loss.  Throughout, we fix a deterministic oracle $x^\star:\RR^d\to\X^\angle$ such that
$x^\star(v)\in\argmin_{x\in\X} v^\top x$ for all $v$ (e.g., with lexicographic tie-breaking rule).

\paragraph{Ellipsoidal prior with a canonical lifting map.}
Throughout this section, we specialize the prior set $\C$ to an ellipsoid
$\C:=\{c\in\RR^d:\ (c-c_0)^\top \Sigma^{-1}(c-c_0)\le 1\}$, for some $\Sigma\succ 0,\ c_0\in\RR^d.$
Let $W\subseteq\RR^d$ be any $t$-dimensional subspace with orthonormal basis $U\in\RR^{d\times t}$.
We define the \emph{lifting matrix} $\mathcal{L}_U:=\Sigma U(U^\top \Sigma U)^{-1}$, and the corresponding \emph{canonical lifting map} (Appendix~\ref{app:lift})
\begin{equation}
\label{eq:lift_operator_main}
\operatorname{lift}_U(s):=c_0+\mathcal{L}_U s,\qquad s\in\RR^t.
\end{equation}
%Then $\operatorname{lift}_U(s)\in\C$ whenever $s\in U^\top(\C-c_0)$.

The lifting map is introduced for a simple but important reason:
To use such a prediction in the original optimization problem, we must map it back to a
full cost vector in $\RR^d$.
When $\C$ is not centered at the origin, a purely linear compressed predictor would typically have range contained in a linear subspace through the origin and hence may not lie in $\C$.
The canonical lifting map provides a principled way to ``complete'' a low-dimensional coordinate into a cost vector
that is feasible for the prior set $\C$.

\paragraph{Two-stage pipeline.}
Our deployment follows a two-stage pipeline that separates decision-sufficient representations from predictor training. In \emph{Stage I}, we construct a dataset $\hat\Dset$ that is
decision-sufficient on the prior set~$\C$, and then form the induced subspace $\hat W:=\spanop(\hat\Dset)$ with orthonormal basis $\hat U$.
In \emph{Stage II}, we draw independent contextual samples
$S:=\{(\xi_i,c_i)\}_{i=1}^n\sim P$ and train a predictor that
first predicts a coordinate in the learned subspace $\hat W$. 
%We then lift it back to $\mathbb{R}^d$ via~\eqref{eq:lift_operator_main} (with $U=\hat U$).
%Moreover, if we constrain the predicted coordinate to lie in $\hat U^\top(\mathcal{C}-c_0)$,
%then the lifted prediction lies in $\mathcal{C}$ by \Cref{lem:lifting_main}.

To highlight the role of the decision-relevant subspace, we first analyze Stage~II under an idealized assumption that Stage~I has recovered the global decision-relevant subspace.
Concretely, let $\Dset$ be a minimal-size global sufficient dataset on $\C$ and define $W_\star:=\spanop(\Dset)$ with intrinsic dimension $\dstar:=\dim(W_\star)$.
Let $U_\star\in\RR^{d\times \dstar}$ be an orthonormal basis for $W_\star$.
In Section~\ref{sec:clo_stage1}, we then show how to implement Stage~I from contextual samples $(\xi,c)$ via conditional-mean regression and Algorithm~\ref{alg:cumulative}, yielding an additional representation-estimation term.

\subsection{SPO training in a decision-sufficient subspace}
Given a predictor $\hat c=f(\xi)$, the plug-in decision is $x^\star(\hat c)$ and the SPO loss is
$\ell_{\mathrm{SPO}}(\hat c,c):=c^\top x^\star(\hat c)-c^\top x^\star(c)\ge 0$.
The corresponding \emph{SPO risk} of a predictor $f:\Xi\to\RR^d$ is $R_{\mathrm{SPO}}(f)
:=\EE_{(\xi,c)\sim P}\bigl[\ell_{\mathrm{SPO}}(f(\xi),c)\bigr]
=\EE_{(\xi,c)\sim P}\bigl[c^\top x^\star(f(\xi)) - c^\top x^\star(c)\bigr].$ In practice, given i.i.d.\ samples $S=\{(\xi_i,c_i)\}_{i=1}^n$ in Stage~II, we train $\theta$ by minimizing either the empirical SPO risk
$\hat R_{\mathrm{SPO}}(f):=\frac1n\sum_{i=1}^n \ell_{\mathrm{SPO}}(f(\xi_i),c_i)$
or its convex surrogate
$\hat R_{\mathrm{SPO}+}(f):=\frac1n\sum_{i=1}^n \ell_{\mathrm{SPO}+}(f(\xi_i),c_i)$.
Following \citet{elmachtoubgrigas2022spo}, the convex surrogate $\ell_{\mathrm{SPO+}}(\hat c,c)
 := \max_{x\in\X} (c-2\hat c)^\top x + 2\hat c^\top x^\star(c) - c^\top x^\star(c).$ Let $x^0=x^\star(c)$ and let $x^1\in\argmax_{x\in\X} (c-2\hat c)^\top x$, equivalently $x^1=x^\star(2\hat c-c)$.
By Danskin's theorem,
\begin{equation}
\label{eq:spoplus_subgrad_main}
2(x^0-x^1)\ \in\ \partial_{\hat c}\,\ell_{\mathrm{SPO+}}(\hat c,c),
\end{equation}
so each stochastic subgradient step requires at most two oracle calls, at $c$ and $2\hat c-c$. Focusing on the decision-sufficient subspace, one can parametrize a cost predictor by first predicting a
$d^\star$-dimensional centered coordinate
$g_\theta:\Xi\to\RR^{d^\star}$ and then lifting it back to $\RR^d$ via
$$\hat c_\theta(\xi)
=\operatorname{lift}_{U_\star}(g_\theta(\xi))
=c_0+\mathcal{L}_{U_\star} g_\theta(\xi),
\qquad \mathcal{L}_{U_\star}:=\Sigma U_\star(U_\star^\top \Sigma U_\star)^{-1}.$$
For linear coordinate models of the form $g_\theta(\xi)=B_\theta\xi$ with $B_\theta\in\RR^{d^\star\times p}$, this reduces the number
of trainable parameters from $dp$ to $d^\star p$.
Composing \eqref{eq:spoplus_subgrad_main} with the affine lifting map and applying the subgradient chain rule yields a valid update in the compressed coordinates: for any choice of
$v\in \partial_{\hat c}\,\ell_{\mathrm{SPO+}}(\hat c_\theta(\xi),c)$,
the linearity of the lifting map implies
$\mathcal{L}_{U_\star}^\top v\in \partial_{g}\,\ell_{\mathrm{SPO+}}(c_0+\mathcal{L}_{U_\star} g,c)\big|_{g=g_\theta(\xi)}$.
If $g_\theta$ is differentiable in $\theta$, then
$$(\nabla_\theta g_\theta(\xi))^\top \mathcal{L}_{U_\star}^\top v
\ \in\
\partial_\theta\,\ell_{\mathrm{SPO+}}(\hat c_\theta(\xi),c),
\quad
v:=2\!\left(x^\star(c)-x^\star\!\bigl(2(c_0+\mathcal{L}_{U_\star} g_\theta(\xi))-c\bigr)\right).$$

For the linear coordinate model $g_\theta(\xi)=B_\theta \xi$, a valid stochastic subgradient for $B_\theta$ at sample $(\xi,c)$ is $(\mathcal{L}_{U_\star}^\top v)\,\xi^\top$.
Algorithm~\ref{alg:streaming_spoplus} summarizes the resulting Stage~II training routine.

\begin{algorithm}[htbp]
\caption{Compressed SPO\texorpdfstring{$+$}{+} training (Stage II)}
\label{alg:streaming_spoplus}
\begin{algorithmic}[1]
\REQUIRE Compressed dataset $\hat\Dset$, basis $U\in\RR^{d\times t}$ for $\hat W=\spanop(\hat\Dset)$, stepsizes $\{\eta_k\}_{k\ge 0}$
\STATE Compute $\mathcal{L}_U\gets \Sigma U(U^\top \Sigma U)^{-1}$ and define $\operatorname{lift}_U(s)=c_0+\mathcal{L}_U s$
\FOR{$k=0,1,2,\dots$}
    \STATE Sample $(\xi_k,c_k)$ uniformly from $S$
    \STATE Predict a centered coordinate in $\RR^{t}$: $\hat s_k\gets g_\theta(\xi_k)$ 
    \STATE Lift to $\RR^{d}$: $\hat c_k\gets \operatorname{lift}_U(\hat s_k)=c_0+\mathcal{L}_U\hat s_k$
    \STATE $x^0\gets x^\star(c_k)$, $x^1\gets x^\star(2\hat c_k-c_k)$
    \STATE $v_k\gets 2(x^0-x^1)$
    \STATE $\theta\gets \theta-\eta_k(\nabla_\theta g_\theta(\xi_k))^\top \mathcal{L}_U^\top v_k$
\ENDFOR
\end{algorithmic}
\end{algorithm}

% ------------------------------------------------------------
\subsection{Improved generalization bound}
\label{sec:clo_gen_bound}

Consider the compressed affine-linear hypothesis class $\mathcal{H}_{U_\star,\dstar}:=\{ f_B(\xi)=c_0+\mathcal{L}_{U_\star} B\xi:\ B\in\RR^{\dstar\times p}\}.$ We define the SPO range constant $\omega_{\X}(\C):=\sup_{c\in\C}\left(\max_{x\in\X} c^\top x-\min_{x\in\X} c^\top x\right).$ With these notations in place, we can state the following guarantee for Stage~II training.

\begin{theorem}
\label{thm:misspec_plus_gen}
Let $\mathcal{H}_{U_\star,\dstar}$ be as above and define the $\C$-valued affine-linear classes
\[
\mathcal{H}(\C):=\{f_A(\xi)=c_0+A\xi:\ A\in\RR^{d\times p},\ f_A(\xi)\in\C\ \text{a.s.}\},
\quad
\mathcal{H}_{U_\star,\dstar}(\C):=\{f\in\mathcal{H}_{U_\star,\dstar}:\ f(\xi)\in\C\ \text{a.s.}\}.
\]

\begin{enumerate}[label=(\arabic*)]
\item \textbf{No misspecification loss.} Let $f_\star\in\argmin_{f\in\mathcal{H}(\C)} R_{\mathrm{SPO}}(f)$ and define its compressed version $\hat f_\star(\xi)
:=\operatorname{lift}_{U_\star}\!\bigl(U_\star^\top(f_\star(\xi)-c_0)\bigr)
=c_0+\mathcal{L}_{U_\star}U_\star^\top(f_\star(\xi)-c_0)\in\mathcal{H}_{U_\star,\dstar}(\C).$

Then $\hat f_\star\in\argmin_{f\in\mathcal{H}_{U_\star,\dstar}(\C)} R_{\mathrm{SPO}}(f)$ and $R_{\mathrm{SPO}}(f_\star)=R_{\mathrm{SPO}}(\hat f_\star)$.

\item \textbf{Improved generalization bound in the compressed class.}
For any $\delta\in(0,1)$, with probability at least $1-\delta$ over $S$, the following bound holds
\emph{simultaneously for all} $f\in\mathcal{H}_{U_\star,\dstar}$:
\begin{equation}
\label{eq:total_fstar_misspec_plus_gen}
R_{\mathrm{SPO}}(f)
\le
\hat R_{\mathrm{SPO}}(f)
+2\,\omega_{\X}(\C)\,\sqrt{\frac{2(\dstar p+1) \log(n|\X^\angle|^2)}{n}}
+\omega_{\X}(\C)\,\sqrt{\frac{\log(1/\delta)}{2n}}.
\end{equation}
%In particular, combining this inequality with part~(1) yields
%\begin{equation}
%\label{eq:total_fstar_misspec_plus_gen}
%R_{\mathrm{SPO}}(f_\star)=R_{\mathrm{SPO}}(\hat f_\star)
%\le
%\hat R_{\mathrm{SPO}}(\hat f_\star)
%+2\,\omega_{\X}(\C)\,\sqrt{\frac{2(\dstar (p+1)+1)\log(n|\X^\angle|^2)}{n}}
%+\omega_{\X}(\C)\,\sqrt{\frac{\log(1/\delta)}{2n}}.
%\end{equation}
\end{enumerate}
\end{theorem}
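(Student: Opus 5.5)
For part (1), the plan is to show that the compression map $\pi(v):=\operatorname{lift}_{U_\star}(U_\star^\top(v-c_0))$ does not change the optimal decision for any $v\in\C$. Two facts about the lifting matrix carry this. First, $U_\star^\top\mathcal{L}_{U_\star}=I$. Hence $U_\star^\top(\pi(v)-v)=0$, so $\pi(v)$ and $v$ produce the same measurements on $W_\star=\spanop(\Dset)$.

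Second, we need $\pi(v)\in\C$. The vector $\pi(v)-c_0=\Sigma U_\star(U_\star^\top\Sigma U_\star)^{-1}s$ with $s=U_\star^\top(v-c_0)$ is the minimum-$\Sigma^{-1}$-norm point satisfying $U_\star^\top y=s$. Its norm is therefore at most that of $v-c_0$, which is $\le 1$. This is a one-line Lagrangian/projection check.

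With both facts, $\pi(v)$ and $v$ lie in the same fiber $\C(\Dset,s(v;\Dset))$. Since $\Dset$ is a global SDD, the argument of \Cref{prop:pointwise-basic}(ii) gives $\X^\star(\pi(v))=\X^\star(v)$. The decision oracle $x^\star$ is a deterministic function of the optimal set (lexicographic tie-breaking over $\argmin$). Therefore $x^\star(\pi(v))=x^\star(v)$.

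Apply this with $v=f_\star(\xi)\in\C$ a.s. Then $\hat f_\star=\pi\circ f_\star$ is affine-linear, namely $c_0+\mathcal{L}_{U_\star}(U_\star^\top A)\xi$, so it lies in $\mathcal{H}_{U_\star,\dstar}(\C)$. It also produces identical decisions almost surely, which gives $R_{\mathrm{SPO}}(\hat f_\star)=R_{\mathrm{SPO}}(f_\star)$. Optimality then follows from the inclusion $\mathcal{H}_{U_\star,\dstar}(\C)\subseteq\mathcal{H}(\C)$ (take $A=\mathcal{L}_{U_\star}B$), which yields $\min_{\mathcal{H}_{U_\star,\dstar}(\C)}R_{\mathrm{SPO}}\ge R_{\mathrm{SPO}}(f_\star)$.

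A subtlety is that \Cref{def:bennouna-sdd} was stated for open $\C$, whereas here the ellipsoid is closed. I would handle this by taking $\Dset$ to be an SDD on $\C$ directly as assumed in the setup; alternatively, invoke it on the open interior and extend to boundary points using closedness of optimality cones.

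For part (2), I would follow the Natarajan-dimension route of \citet{elbalghiti2023generalization}. The loss $\ell_{\mathrm{SPO}}(f(\xi),c)$ depends on $f$ only through the decision map $\xi\mapsto x^\star(f(\xi))\in\X^\angle$. For $f_B$ this map is $x^\star(c_0+\mathcal{L}_{U_\star}B\xi)$, i.e. $\argmin_{x\in\X^\angle}\,[c_0^\top x+\langle B,\,(\mathcal{L}_{U_\star}^\top x)\xi^\top\rangle]$. This is a linear multiclass argmin classifier with a $(\dstar p+1)$-dimensional parameter $(B,1)$ and class-dependent feature map $\Psi(\xi,x)=((\mathcal{L}_{U_\star}^\top x)\xi^\top,\,c_0^\top x)$.

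The standard bound for such linear multiclass classes then gives Natarajan dimension at most $\dstar p+1$. Tie-breaking only affects the count, which can be handled by the lexicographic rule as in the cited work. Natarajan's lemma next bounds the growth function by $n^{\dstar p+1}|\X^\angle|^{2(\dstar p+1)}$, so the empirical Rademacher complexity of the decision-induced loss class is at most $\omega_\X(\C)\sqrt{2(\dstar p+1)\log(n|\X^\angle|^2)/n}$ by Massart's lemma. Here losses lie in $[0,\omega_\X(\C)]$, using $c\in\C$.

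The standard symmetrization plus McDiarmid bound, with range $\omega_\X(\C)$, then yields \eqref{eq:total_fstar_misspec_plus_gen} uniformly over $\mathcal{H}_{U_\star,\dstar}$.

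The main obstacle I expect is the Natarajan-dimension step. I must make sure the effective parameter count is $\dstar p+1$ rather than $dp$, which holds because the lifting matrix is fixed, so only $B$ varies. I also need the deterministic tie-breaking to be compatible with the linear-argmin representation; if it is not, I would perturb $c_0$ lexicographically to break ties generically.
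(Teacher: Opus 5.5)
Your proposal is correct and follows essentially the paper's proof: part (1) is the paper's lossless-compression lemma (equal measurements via $U_\star^\top\mathcal{L}_{U_\star}=I$, $\pi(v)\in\C$ via the minimum-$\Sigma^{-1}$-norm property of the lift, then global sufficiency of $\Dset$ on $\C$ plus a tie-breaking rule that depends only on the optimal set), and part (2) uses the same feature map and the same Natarajan bound $\dstar p+1$, where you unpack the cited bound of \citet{elbalghiti2023generalization} (Natarajan's lemma, Massart, symmetrization and McDiarmid) and recover the stated constants. Keep your primary resolution of the closedness issue, which matches the paper's setup ($\Dset$ is assumed to be a global SDD on the closed ellipsoid itself); your fallback of extending from the interior via closedness of optimality cones would not work, because closedness only shows that interior-optimal vertices stay optimal at a boundary cost $v$, while $\X^\star(v)$ can be strictly larger and the tie-breaking rule may then pick a vertex different from $x^\star(\pi(v))$.
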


\paragraph{Proof sketch.}
The proof has three ingredients:
(i) we bound the complexity of the induced decision class $x^\star\circ\mathcal H_{U_\star,\dstar}$ via its Natarajan dimension,
(ii) we plug this bound into the Natarajan-dimension generalization theorem for the SPO loss due to \citet{elbalghiti2023generalization}, and
(iii) we show that under a global sufficient dataset, projecting to $W_\star$ and lifting back is decision-preserving, so restricting to the compressed class incurs no approximation error.
A complete proof is in Appendix~\ref{app:spo_gen_dec_suff_subspace}.

Compared to \citet{elbalghiti2023generalization}, the bound in \eqref{eq:total_fstar_misspec_plus_gen} replaces the ambient dimension $d$ in the dominant
$\widetilde{O}(1/\sqrt{n})$ generalization error term by the decision-relevant intrinsic dimension $d^\star$.

\subsection{Learning decision-sufficient representation from contextual samples}
\label{sec:clo_stage1}

So far, our Stage~II analysis assumed access to the global decision-relevant subspace $W_\star$.
We now explain how Stage~I can be implemented from labeled contextual samples
$\{(\xi_i,c_i)\}_{i=1}^N$ and quantify the resulting \emph{representation-estimation} error.

Recall that for contextual linear optimization under the SPO loss, the Bayes-optimal decision rule is
$x^\star(\mu(\xi))$, where $\mu(\xi):=\EE[c\mid \xi]$.
Since $\C$ is convex and $c\in\C$ a.s., we have $\mu(\xi)\in\C$ a.s.
If we could draw i.i.d.\ samples from the (unobserved) distribution of $\mu(\xi)$, then we could run Algorithm~\ref{alg:cumulative} on those samples and---by our certificate guarantee (Theorem~\ref{thm:certificate})---obtain a dataset that is pointwise sufficient with high probability under the distribution of $\mu(\xi)$.
In practice, we only observe noisy costs $c$, so we proceed in two steps:
\begin{enumerate}[label=(\roman*)]
\item estimate $\mu$ from contextual samples via a $\C$-valued regression model $\hat\mu$, and
\item treat the predictions $\hat\mu(\xi)$ on fresh contexts as pseudo-cost samples and run Algorithm~\ref{alg:cumulative}.
\end{enumerate}
The process is summarized in Algorithm~\ref{alg:stage1_condmean}.

\begin{algorithm}[htbp]
\caption{Learning decision-sufficient representation from contextual samples (Stage I)}
\label{alg:stage1_condmean}
\begin{algorithmic}[1]
\REQUIRE Regression sample $\{(\xi_i,c_i)\}_{i=1}^{n_{\mu}}$, discovery contexts $\{\xi^{\disc}_j\}_{j=1}^{n_{\disc}}$.
\STATE Fit a regression model $\hat \mu$ for $\mu(\xi):=\EE[c\mid \xi]$ (e.g., the centered linear model as discussed).
\STATE Form pseudo-costs $\hat c_j\gets \hat\mu(\xi^{\disc}_j)$ for $j=1,\dots,n_{\disc}$.
\STATE Run Algorithm~\ref{alg:cumulative} on $\{\hat c_j\}_{j=1}^{n_{\disc}}$ and return $(\hat\Dset,T)$.
\end{algorithmic}
\end{algorithm}

\paragraph{A centered linear conditional-mean model.}
A convenient choice for $\hat\mu$ is multi-response ordinary least squares (OLS).
Because the prior set is the shifted ellipsoid $\C=\{c:(c-c_0)^\top\Sigma^{-1}(c-c_0)\le 1\}$, it is natural to write the conditional-mean model in centered form around $c_0$:
\begin{equation}
\label{eq:centered_condmean_model}
c-c_0 \;=\; A_\mu \xi + \epsilon,
\qquad
\EE[\epsilon\mid \xi]=0,
\qquad
\mu(\xi)=\EE[c\mid \xi]=c_0 + A_\mu\xi.
\end{equation}
Equivalently, we regress the centered response $y:=c-c_0$ onto $\xi$, and then set $\hat\mu(\xi):=c_0+\hat A_\mu\xi$.
We quantify the regression accuracy via the mean-squared prediction error $\varepsilon_\mu^2:=\EE_{\xi}\bigl[\|\hat\mu(\xi)-\mu(\xi)\|_2^2\bigr].$
For the sharpened Stage~I analysis below, we will use a stronger high-probability control: under bounded-design OLS, Appendix~\ref{app:bound_ols} (Lemma~\ref{lem:ols_bound_eps_mu}) provides simultaneous bounds on $\|\hat A_\mu-A_\mu\|_F$, on the uniform prediction radius $\sup_{\|\xi\|_2\le 1}\|\hat\mu(\xi)-\mu(\xi)\|_2$, and hence on $\varepsilon_\mu$.

\paragraph{A lifted compressed predictor.}
Let $\hat W:=\spanop(\hat\Dset)$ and let $\hat U\in\RR^{d\times t}$ be an orthonormal basis of $\hat W$, where $t :=\dim(\hat W)$.
Define the \emph{lifted compressed} conditional-mean predictor
\begin{equation}
\label{eq:tilde_mu_def}
\tilde \mu(\xi)
\ :=\
\operatorname{lift}_{\hat U}\!\left(\hat U^\top\bigl(\hat\mu(\xi)-c_0\bigr)\right)
=
c_0+\mathcal{L}_{\hat U}\hat U^\top\bigl(\hat\mu(\xi)-c_0\bigr).
\end{equation}
Under the centered linear model \eqref{eq:centered_condmean_model}, we have $\hat\mu(\xi)-c_0=\hat A_\mu\xi$ by the definition of $\hat\mu$, hence
\begin{equation}
\label{eq:tilde_mu_in_H}
\tilde\mu(\xi)
=
c_0+\mathcal{L}_{\hat U}\,(\hat U^\top \hat A_\mu)\,\xi.
\end{equation}
Therefore $\tilde\mu$ belongs to the compressed affine-linear class $\mathcal{H}_{\hat U,t}:=\{f_B(\xi)=c_0+\mathcal{L}_{\hat U}B\xi:\ B\in\RR^{t\times p}\}$.

\paragraph{A cone-boundary margin condition.}
The oracle $x^\star(\cdot)$ is locally constant on the interior of each optimality cone of~$\X$ and may change discontinuously on boundaries between cones.
Define the \emph{cone-boundary set}
\[
\mathcal{B}_{\X}
\ :=\
\left\{c\in\RR^d:\ \exists\,x\neq x'\in\X^\angle\ \text{s.t.}\ x,x'\in\argmin_{x\in\X}c^\top x\right\}.
\]
Our transfer argument from regression error to decision error relies on the standard margin assumption:

\begin{assumption}[Margin condition]
\label{assump:cone_boundary_margin}
There exist constants $C_{\mathrm{marg}}>0$ and $\alpha>0$ such that, for all $\eta>0$,
\[
\PP_{\xi}\!\left[\dist\bigl(\mu(\xi),\mathcal{B}_{\X}\bigr)\le \eta\right]\ \le\ C_{\mathrm{marg}}\,\eta^{\alpha}.
\]

Moreover, almost surely over the realized Stage-I samples, the Stage-I
regression predictor satisfies
\[
\PP_{\xi}\!\left[\hat \mu(\xi)\in \C \right] = 1,
\qquad
\PP_{\xi}\!\left[\tilde \mu(\xi)\in \mathcal{B}_{\X}\right] = 0.
\]

\end{assumption}

\paragraph{Stage-I representation error.}
The next theorem bounds the additional decision loss introduced by learning the representation from contextual samples.
Under the bounded-design OLS conditions from Appendix~\ref{app:bound_ols}, the decision error decomposes into
(i)~a \emph{certificate error} term due to running Algorithm~\ref{alg:cumulative} on finitely many pseudo-costs, and
(ii)~a \emph{regression-to-decision} term controlled by a uniform prediction radius and the cone-boundary margin.

\begin{theorem}[Stage-I representation error under bounded-design OLS]
\label{thm:misspec_plus_gen_condmean}
Suppose Stage~I runs Algorithm~\ref{alg:stage1_condmean} with discovery sample size $n_{\disc}$ and returns $(\hat\Dset,T)$,
where $T$ is the compression subsequence produced by Algorithm~\ref{alg:cumulative}.
Under Assumption~\ref{assump:cone_boundary_margin}, suppose in addition that the bounded-design OLS conditions of Lemma~\ref{lem:ols_bound_eps_mu} hold. Fix $\delta_\mu,\delta\in(0,1)$ and define
\[
r_{\mu,\delta_\mu}
\ :=\
C_{\mathrm{reg}}\cdot\frac{\sigma}{\sqrt{\kappa}}
\sqrt{\frac{d\Bigl(p+\log\!\frac{4d}{\delta_\mu}\Bigr)}{n_\mu}}.
\]
Then, with probability at least $1-\delta_\mu-\delta$ over the regression sample and the i.i.d.\ discovery contexts,
\begin{equation}
\label{eq:repr_term_mu}
\PP_{\xi}\bigl[x^\star(\tilde\mu(\xi))\neq x^\star(\mu(\xi))\bigr]
\ \le\
\frac{4}{n_{\disc}}\left(6|T|+\log(e/\delta)\right)
\ +\
C_{\mathrm{marg}}\,r_{\mu,\delta_\mu}^{\alpha}.
\end{equation}
In particular, since $|T|\le |\hat\Dset|\le \dstar$,
\[
\frac{4}{n_{\disc}}\left(6|T|+\log(e/\delta)\right)
\ \le\
\frac{4}{n_{\disc}}\left(6\dstar+\log(e/\delta)\right).
\]

Moreover, letting $f_\star(\xi):=\mu(\xi)$ be a Bayes-optimal SPO predictor and letting
$\hat f_\star\in\argmin_{f\in\mathcal{H}_{\hat U,t}}R_{\mathrm{SPO}}(f)$ be the best predictor restricted to the learned representation, we have the representation error bound
\begin{equation}
\label{eq:stage1_misspec_loss}
0\ \le\ R_{\mathrm{SPO}}(\hat f_\star)-R_{\mathrm{SPO}}(f_\star)
\ \le\ 
\omega_{\X}(\C)\cdot
\left[
\frac{4}{n_{\disc}}\left(6\dstar+\log(e/\delta)\right)
+
C_{\mathrm{marg}}\,r_{\mu,\delta_\mu}^{\alpha}
\right].
\end{equation}
\end{theorem}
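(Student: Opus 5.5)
The plan is to split the event $\{x^\star(\tilde\mu(\xi))\neq x^\star(\mu(\xi))\}$ through the intermediate prediction $\hat\mu(\xi)$, using a union bound:
\[
\{x^\star(\tilde\mu)\neq x^\star(\mu)\}\subseteq\{x^\star(\tilde\mu)\neq x^\star(\hat\mu)\}\cup\{x^\star(\hat\mu)\neq x^\star(\mu)\}.
\]
The first event is controlled by the certificate of \Cref{thm:certificate}, applied to the pseudo-cost distribution. The second is controlled by the uniform regression radius together with the margin condition.

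\textbf{Certificate term.} We condition on the regression sample, so $\hat\mu$ is fixed. The discovery contexts are independent of the regression sample, so the pseudo-costs $\hat c_j=\hat\mu(\xi^{\disc}_j)$ are i.i.d. draws from the pushforward law of $\hat\mu(\xi)$. By Assumption~\ref{assump:cone_boundary_margin} this law is supported on $\C$. Applying \Cref{thm:certificate} conditionally gives, with probability at least $1-\delta$,
\[
\PP_\xi\bigl[\hat\Dset\text{ is not pointwise sufficient at }\hat\mu(\xi)\bigr]\le \frac{4}{n_{\disc}}\bigl(6|T|+\log(e/\delta)\bigr).
\]
Lemma~\ref{lem:hard-bound} then gives $|T|\le\dstar$.

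The key geometric step is to show that on the event $\{\hat\Dset$ pointwise sufficient at $\hat\mu(\xi)\}\cap\{\tilde\mu(\xi)\notin\mathcal B_\X\}$ we have $x^\star(\tilde\mu(\xi))=x^\star(\hat\mu(\xi))$. For this I would invoke the lifting lemma of Appendix~\ref{app:lift}, which says that $\operatorname{lift}_{\hat U}(\hat U^\top(c-c_0))\in\C$ for $c\in\C$ (it is the ellipsoid-center point of the fiber). Since $\hat U^\top\mathcal L_{\hat U}=I$, the lifted point has the same measurements as $\hat\mu(\xi)$ along $\hat W=\spanop(\hat\Dset)$. Hence $\tilde\mu(\xi)$ lies in the fiber $\C(\hat\Dset,s(\hat\mu(\xi);\hat\Dset))$, which by pointwise sufficiency sits inside a single cone $\Lambda(B)$ that also contains $\hat\mu(\xi)$. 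Because $\tilde\mu(\xi)\notin\mathcal B_\X$ almost surely, $x(B)$ is its unique minimizer, so $x^\star(\tilde\mu)=x(B)$.

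I expect the main obstacle to be the deterministic oracle at $\hat\mu(\xi)$ itself. If $\hat\mu(\xi)$ lies on a cone boundary, $x^\star(\hat\mu)$ might select a different vertex. I would handle this by routing the comparison directly to $\mu$ instead: show that $x(B)=x^\star(\mu(\xi))$ whenever $\mu$ is $r$-far from $\mathcal B_\X$. Such a $\mu$ lies in the interior of a unique cone, and so does every point within distance $r$ of it, including $\hat\mu(\xi)$; since $\hat\mu\in\Lambda(B)$, that cone must be $\Lambda(B)$. This removes the need to evaluate the oracle at $\hat\mu$.

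\textbf{Regression term.} Lemma~\ref{lem:ols_bound_eps_mu} gives, with probability at least $1-\delta_\mu$, the bound $\sup_{\|\xi\|\le1}\|\hat\mu(\xi)-\mu(\xi)\|_2\le r_{\mu,\delta_\mu}$ on the bounded design. If $\dist(\mu(\xi),\mathcal B_\X)>r_{\mu,\delta_\mu}$, then the segment from $\mu$ to $\hat\mu$ avoids $\mathcal B_\X$, so $\hat\mu(\xi)$ stays in the same open cone as $\mu(\xi)$. The margin condition bounds the probability of the complementary event by $C_{\mathrm{marg}}r^\alpha_{\mu,\delta_\mu}$. A union bound over the two failure probabilities $\delta_\mu$ and $\delta$ then yields \eqref{eq:repr_term_mu}.

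\textbf{Risk bound \eqref{eq:stage1_misspec_loss}.} The lower bound $0\le R_{\mathrm{SPO}}(\hat f_\star)-R_{\mathrm{SPO}}(f_\star)$ holds because $\mu$ is Bayes-optimal for the SPO loss: $\EE[c^\top x\mid\xi]=\mu(\xi)^\top x$. For the upper bound, \eqref{eq:tilde_mu_in_H} shows $\tilde\mu\in\mathcal H_{\hat U,t}$, so $R_{\mathrm{SPO}}(\hat f_\star)\le R_{\mathrm{SPO}}(\tilde\mu)$. Conditioning on $\xi$ gives
\[
R_{\mathrm{SPO}}(\tilde\mu)-R_{\mathrm{SPO}}(\mu)=\EE_\xi\bigl[\mu(\xi)^\top(x^\star(\tilde\mu)-x^\star(\mu))\bigr].
\]
The integrand vanishes when the two decisions agree. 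Otherwise it is at most $\omega_\X(\C)$, since $\mu(\xi)\in\C$. Combining with \eqref{eq:repr_term_mu} and $|T|\le\dstar$ completes the argument.
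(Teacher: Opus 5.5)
Your proposal is correct and follows essentially the same route as the paper. Lemma~\ref{lem:regression_to_sufficiency_margin} intersects exactly your events (pointwise sufficiency at $\hat\mu(\xi)$, the regression radius, $\dist(\mu(\xi),\mathcal{B}_{\X})>\eta$, and $\tilde\mu(\xi)\notin\mathcal{B}_{\X}$) and uses the lifting lemma to place $\tilde\mu(\xi)$ in the fiber of $\hat\mu(\xi)$; the proof then takes $\eta=r_{\mu,\delta_\mu}$ so the regression tail vanishes, applies \Cref{thm:certificate} on the pseudo-costs, and bounds the SPO gap by $\omega_{\X}(\C)$ times the disagreement probability. Your rerouting of the comparison directly to $\mu(\xi)$ matches what the paper does, and your segment/connectedness argument justifies the paper's unproved one-line claim that the $\eta$-ball around $\mu(\xi)$ lies in the interior of a single optimality cone.
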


\paragraph{Proof idea.}
The certificate term in \eqref{eq:repr_term_mu} follows by applying Theorem~\ref{thm:certificate} to the pseudo-cost sample
$\{\hat c_j=\hat\mu(\xi_j^{\disc})\}_{j=1}^{n_{\disc}}$.
The regression-to-decision term uses the tail-form transfer bound proved in Appendix~\ref{app:clo_condmean_stage1} together with
the uniform prediction-radius bound \eqref{eq:ols_bound_mu_sup} from Lemma~\ref{lem:ols_bound_eps_mu}: on the OLS high-probability event,
we take $\eta=r_{\mu,\delta_\mu}$ so that the regression tail vanishes and only the cone-boundary margin term remains.
The complete proof is in Appendix~\ref{app:clo_condmean_stage1}.

\paragraph{Stage~I representation error rate under OLS.}
In \Cref{alg:stage1_condmean}, we use $n_\mu$ contextual samples for regression and $n_{\disc}$ contextual samples for discovery.
Under the centered linear model \eqref{eq:centered_condmean_model}, Theorem~\ref{thm:misspec_plus_gen_condmean} yields an explicit bound for the
representation term in \eqref{eq:repr_term_mu}: up to logarithmic factors, the additional Stage~I representation error scales as
\[
\widetilde{O}\!\left(n_\mu^{-\alpha/2} + n_{\disc}^{-1}\right).
\]
Let $n_I := n_\mu + n_{\disc}$ denote the total sample size used in Stage~I.
Under a constant-fraction split (e.g., $n_\mu=\lfloor n_I/2\rfloor$ and $n_{\disc}=n_I-n_\mu$), we have
$n_\mu=\Theta(n_I)$ and $n_{\disc}=\Theta(n_I)$; hence this term simplifies to
\[
\widetilde{O}\!\left(n_I^{-1}+n_I^{-\alpha/2}\right)
=
\widetilde{O}\!\left(n_I^{-\min\{1,\alpha/2\}}\right).
\]
Therefore, if $\alpha>1$ and Stage~I and Stage~II use comparable sample sizes
(e.g., $n_I=\Theta(n_{\mathrm{train}})$), the additional Stage~I representation error is lower-order than the
$n_{\mathrm{train}}^{-1/2}$ generalization term in Stage~II.
Consequently, the overall statistical rate is governed by Stage~II, whose dominant term depends on the intrinsic dimension $\dstar$
rather than the ambient dimension $d$; see \cref{thm:misspec_plus_gen}. This improves sample efficiency for CLO.
Moreover, our decision-sufficient representation learning framework reduces the number of trainable parameters in Stage~II from $dp$
to $\dstar p$ as an additional advantage.

\subsection{Numerical experiment}
\label{sec:numerics}

We provide a small synthetic shortest-path CLO experiment to illustrate the sample-efficiency gains suggested by the intrinsic $\dstar$ dependence in \Cref{thm:misspec_plus_gen}.
We consider a monotone shortest-path instance on a $5\times 5$ grid ($g=5$), so the cost dimension is
$d=2g(g-1)=40$.
The feasible polytope has $|\X^\angle|=\binom{2(g-1)}{g-1}=70$ extreme points, each corresponding to a monotone path.
Contexts are drawn i.i.d.\ as $\xi\sim\mathcal N(0,I_p)$ with $p=5$.
We take $\C=\{c\in\RR^d:\|c-c_0\|_2\le 1\}$, where $c_0$ assigns cost $10$ on a fixed low-cost corridor and
$100$ elsewhere. This forces all shortest paths to remain within the corridor, and by enumeration on the
$5\times 5$ grid, the resulting intrinsic dimension is $\dstar=7$.

We compare (i) \textbf{full-$d$ SPO$+$}: a linear predictor $\hat c(\xi)\in\RR^d$ trained by SGD on the
SPO$+$ surrogate, and (ii) \textbf{ours (learn $\hat W$ then SPO$+$)}: first learn a subspace $\hat W$ online
from observed contexts and costs, and then train a reduced predictor in the learned subspace.
We use $n_{\mathrm{train}}=300$ labeled context--cost pairs for Stage~II and an independent test set of size
$n_{\mathrm{test}}=2000$, repeating over $10$ random trials and reporting mean $\pm$ $90\%$ confidence intervals.

Stage~I: \Cref{fig:numerics_dimW} reports the learned dimension $t=\dim(\hat W)$.
Stage~II: \Cref{fig:numerics_spo_risk} plots the test SPO risk versus the number of labeled samples used to train the predictor.
Consistent with our theory, restricting training to the learned subspace improves performance at a fixed sample size,
and $\hat W$ quickly stabilizes near the true intrinsic dimension $\dstar=7$.

\vspace{10mm}

\begin{figure}[htbp]
    \centering
    \IfFileExists{learnW.png}{\includegraphics[width=0.8\linewidth]{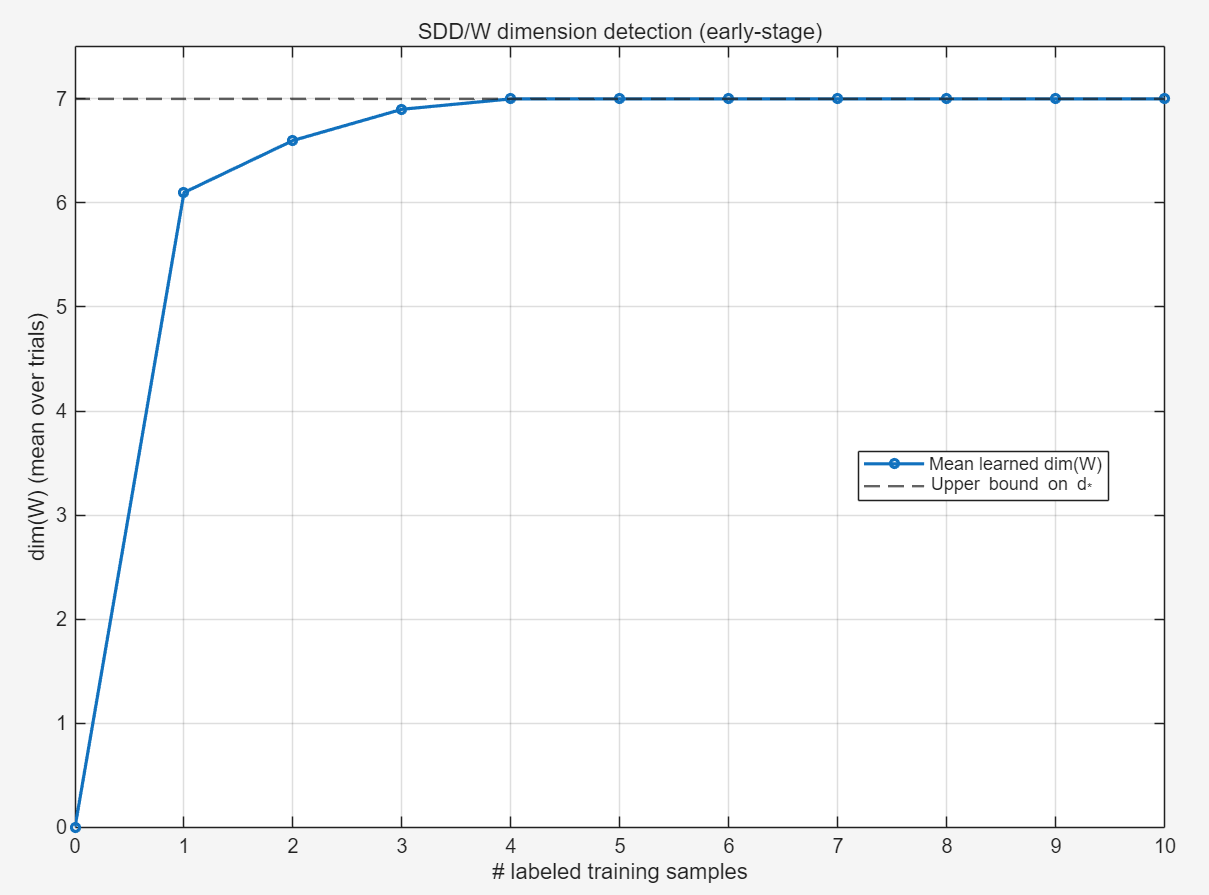}}{\fbox{\parbox{0.8\linewidth}{\centering\texttt{Missing file: learnW.png}}}}
    \caption{Stage I. Learned dimension $t=\dim(\hat W)$ (mean over $10$ trials).}
    \label{fig:numerics_dimW}
\end{figure}

\begin{figure}[htbp]
    \centering
    \IfFileExists{figureSPOloss.png}{\includegraphics[width=0.8\linewidth]{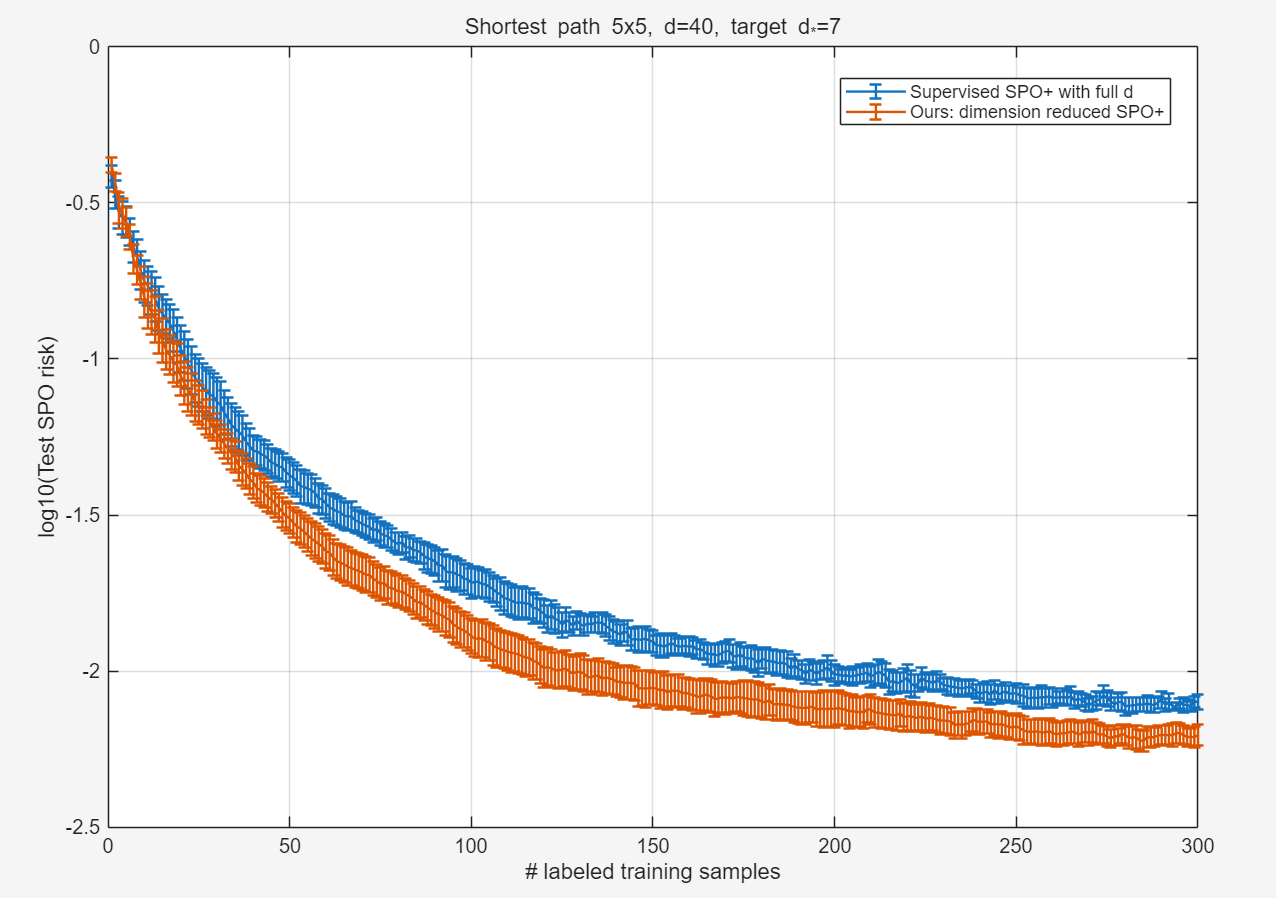}}{\fbox{\parbox{0.8\linewidth}{\centering\texttt{Missing file: figureSPOloss.png}}}}
    \caption{Stage II. SPO risk vs.\ number of labeled samples (mean $\pm$ $90\%$ CIs over $10$ trials).}
    \label{fig:numerics_spo_risk}
\end{figure}

\section{Concluding Remarks and Open Problems}
Our current hardness results for computing the intrinsic decision-relevant dimension $d^\star$ (and hence constructing minimum-size global SDDs)
rely on highly degenerate hard instances. It therefore remains open whether our hardness results persist under the nondegeneracy assumption on~$\X$. Second, our framework focuses on the noiseless setting, where each linear measurement
$q_i^\top c$ is observed exactly. Extending our algorithms to noisy observations and characterizing the resulting sample complexity are open directions. Intuitively, such an extension would likely require an additional
margin condition on $P_c$ that controls how often $c$ lies close to the boundary of an
optimality cone. Finally, we restrict attention to linear optimization in this paper. Extending the decision-sufficient
representation framework to broader problem classes, such as mixed-integer or convex programs,
is an important direction for future work, potentially via approximate notions of sufficiency.

\bibliographystyle{plainnat}
\bibliography{ref}
\newpage
\appendix

\section{Formal statement for hardness and other proofs for Section~\ref{sec:hardness}}
\label{app:hardness}

\subsection{Proof of Theorem~\ref{thm:np-hard-dir-dim}}
\label{app:np-hard-dir-dim}
\begin{theorem}[Formal version of Theorem~\ref{thm:np-hard-dir-dim}]
\label{thm:np-hard-dir-dim-formal}
Fix a coordinate index $r\in[n]$.
The following decision problem is $\mathsf{NP}$-hard: given a bounded polytope $X\subseteq\mathbb{R}^n$ and a polyhedral uncertainty set $\mathcal{C}\subseteq\mathbb{R}^n$ specified in $H$-representation, decide whether
\begin{equation}
\label{eq:np-hard-compare}
\dim\dir(X^\star(\mathcal{C})) \;>\; \dim\dir(X_r^\star(\mathcal{C})),
\end{equation}
where $X_r:=\{x\in X:\ x_r=0\}$ and $X_r^\star(\mathcal{C})$ is defined as in \Cref{def:dir-xstar} with $X$ replaced by $X_r$.
Consequently, computing $\dim\dir(X^\star(\mathcal{C}))$ is $\mathsf{NP}$-hard under polynomial-time Turing reductions.

The hardness persists even when $X$ is the $s$--$t$ unit-flow polytope of a directed acyclic graph and $\mathcal{C}$ is a budgeted set of
arc-length increases of the form
\[
\mathcal{C}_{\mathrm{cl}}=\{d+w:\ w\ge 0,\ \kappa^\top w\le B\}
\qquad\text{or}\qquad
\mathcal{C}_{\mathrm{op}}=\{d+w:\ w>0,\ \kappa^\top w < B+\eta\},
\]
for any fixed rational $\eta\in(0,1)$.
\end{theorem}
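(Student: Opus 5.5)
Reduce from PISPP-W+ \citep[Theorem~3.1]{ley2025solutionmethods}. Given a 3-SAT formula $\varphi$, that reduction produces in polynomial time a DAG $G=(V,A)$ with source $s$, sink $t$, baseline lengths $d\in\mathbb{Q}^A_{>0}$, cost vector $\kappa\ge 0$, budget $B$, and a required arc $r$. The formula $\varphi$ is satisfiable iff some $w\ge0$ with $\kappa^\top w\le B$ makes a shortest $s$--$t$ path under $d+w$ use $r$. Set $X$ to be the unit $s$--$t$ flow polytope of $G$. Its vertices are the incidence vectors $\chi_P$ of $s$--$t$ paths (it is bounded since $G$ is acyclic). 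Take $\mathcal C=\mathcal C_{\mathrm{cl}}$. Then $X^\star(\mathcal C)$ is the set of $\chi_P$ for paths $P$ that are shortest for some $c\in\mathcal C$. The face $X_r$ is the flow polytope of $G-r$, whose vertices are the paths avoiding $r$.

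\textbf{Step 1 (comparison of the two dimensions).} First I will show $X_r^\star(\mathcal C)\subseteq X^\star(\mathcal C)$. To see this, take $P\not\ni r$ optimal over $X_r$ for $c$. Increasing $w_r$ changes nothing in $X_r$, but $\kappa_r$ may be positive, so the budget could be violated. For this reason I will instead prove directly that the \emph{No} case gives equality of the two sets. If no $c\in\mathcal C$ admits a shortest path through $r$, then for every $c\in\mathcal C$ the shortest paths of $G$ coincide with those of $G-r$, so $X^\star(\mathcal C)=X_r^\star(\mathcal C)$ and the dimensions agree.

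In the \emph{Yes} case there is a $c\in\mathcal C$ and a shortest path $P\ni r$. I need $\chi_P-\chi_{P'}\notin\dir(X_r^\star(\mathcal C))$ for some reachable $P'$. Every vector of $\dir(X_r^\star(\mathcal C))$ has $r$-coordinate $0$, whereas $(\chi_P-\chi_{P'})_r=1$ for any $P'$ avoiding $r$. Such a $P'$ exists in $X^\star(\mathcal C)$ as soon as the baseline $d$ has a shortest path avoiding $r$. I will secure this by a harmless preprocessing step: add a parallel $s$--$t$ gadget (a new arc path of length equal to the baseline optimum, also optimal under zero modification) in the way the PISPP instance naturally provides, or simply check that the baseline shortest path avoids $r$ (otherwise the instance is trivially Yes). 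This yields the strict inequality \eqref{eq:np-hard-compare}. Since both sides are values of the same functional $\dim\dir(\cdot^\star(\mathcal C))$ on polynomial-size inputs ($X_r$ is again a flow polytope), two oracle calls decide the inequality, which gives the Turing-reduction consequence.

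\textbf{Step 2 (open version).} For $\mathcal C_{\mathrm{op}}$ I will use the lemma referenced as \Cref{lem:openify-pispp}. Scaling to integer data, the relevant shortest-path comparisons are finitely many linear inequalities with integral coefficients. If a feasible $w$ exists, perturbing it to $w+\epsilon\mathbf 1$ with a small $\epsilon$ keeps it in the open set while preserving $P$ as optimal. To do this, first make $P$ uniquely shortest by a small additional increase off $P$; the slack $\eta$ absorbs the budget change. Conversely, if $w$ is in the open set, then $\kappa^\top w<B+\eta<B+1$. Integrality of the PISPP data, or a rounding/LP-vertex argument, then yields a closed-budget solution with the same shortest-path property. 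Equality/strictness of the dimensions then transfers as in Step 1.

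The main obstacle is this last direction: showing that slack $\eta<1$ cannot create a new feasible modification. I would handle it by noting that the PISPP-W+ feasibility for a fixed path $P\ni r$ is an LP in $w$ with integral data. Its minimum budget is therefore rational with a bounded denominator, and I will choose $\eta$ below the resolution of that denominator (if needed, rescaling $B$ and $\kappa$ so that the minimum is an integer), so that the open and closed instances have the same answer.
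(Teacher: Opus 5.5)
There is a genuine gap in the \emph{Yes} direction of your Step~1. Exhibiting $\chi_P-\chi_{P'}\in\dir(X^\star(\mathcal C))$ with $r$-coordinate $1$ only shows $\dir(X^\star(\mathcal C))\not\subseteq\dir(X_r^\star(\mathcal C))$. To get the strict inequality \eqref{eq:np-hard-compare} you also need $\dir(X_r^\star(\mathcal C))\subseteq\dir(X^\star(\mathcal C))$. That is exactly the inclusion you flagged as doubtful and then never established.

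For a PISPP instance used as a black box this inclusion can fail, even after your preprocessing. Take internally disjoint $s$--$t$ routes $R_0$ (length $10$, $\kappa=1$), $R_r\ni r$ (length $11$), and $R_1,R_2$ (length $12$). Give the last three $\kappa=100$ and set $B=10$.
\begin{itemize}
\item Lengthening $R_0$ makes $R_r$ shortest, so this is a Yes instance, and its baseline optimum avoids $r$.
\item $R_1,R_2$ are never optimal in $G$, since $c(R_r)\le 11.1<12$. So $X^\star(\mathcal C)=\{\chi_{R_0},\chi_{R_r}\}$ and $\dim\dir(X^\star(\mathcal C))=1$.
\item At $c(R_0)=13$ the unmodified $R_1,R_2$ tie as optimal in $G-r$, so $\dim\dir(X_r^\star(\mathcal C))\ge 2$.
\end{itemize}

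The missing idea is the unit-gap structure of the specific Ley--Merkert instance. Every $r$-avoiding $s$--$t$ path uses exactly one shortcut and has $d$-length $2n+2m$, while every path through $r$ has length $2n+2m+1$. So the single cost $d\in\mathcal C_{\mathrm{cl}}$ makes \emph{every} vertex of $X_r$ optimal over $X$. This gives $X_r^\star(\mathcal C)=X_r^\angle\subseteq X^\star(\mathcal C)$, which is \Cref{lem:identify-xrstar}. With that inclusion, your vector argument closes the Yes case. Your No-case argument, that vertex optima over $X$ and $X_r$ coincide for each $c$, is correct and even instance-independent.

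For $\mathcal C_{\mathrm{op}}$ the same inclusion must be re-proved with $w>0$. A uniform shift $\epsilon\mathbf 1$ does not by itself give it, because it breaks the ties among $r$-avoiding paths: their arc counts $2i+2m-2j+1$ differ. The paper instead uses the telescoping perturbation $\delta(u,v)=\varepsilon(\rho(v)-\rho(u))$ along a topological order $\rho$. It is strictly positive yet shifts every $s$--$t$ path by the same constant, so $d+\delta\in\mathcal C_{\mathrm{op}}$ keeps all of $X_r^\angle$ optimal.

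Your converse for the open set also does not match the statement. Choosing $\eta$ below an LP denominator makes $\eta$ instance-dependent, whereas the theorem allows any fixed rational $\eta\in(0,1)$; rescaling $(\kappa,B)$ might repair this, at extra cost. The paper argues directly from the instance. Suppose $\varphi$ is unsatisfiable and some $w>0$ admits a shortest path $P\ni r$. A false clause gives a shortcut $e$ with tail on $P$ and a path $Q$ with $d(Q)=d(P)-1$. Then $0\le -1+w(e)-w(R)$ with $w(R)>0$ forces $w(e)>1$, hence $\kappa^\top w\ge (B+1)\,w(e)>B+1>B+\eta$, a contradiction.
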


\begin{proof}
We prove NP-hardness via the \textsc{3-SAT}$\Leftrightarrow$\textsc{PISPP-W+} construction of
\citet[Theorem~3.1]{ley2025solutionmethods}.
Given a \textsc{3-SAT} instance $\varphi$, their reduction produces a partial inverse shortest path instance with only weight
increases (PISPP-W+), consisting of
a directed acyclic graph $G=(V,A)$ with source $s$ and sink $t$,
initial arc-lengths $d\in\mathbb{Q}^A_{\ge 0}$,
modification costs $\kappa\in\mathbb{Q}^A_{>0}$,
a budget $B\in\mathbb{Q}_{>0}$,
and a single required arc $r\in A$.
The equivalent PISPP-W+ decision question is whether there exists a modification vector $w\in\mathbb{Q}^A_{\ge 0}$ with
$\kappa^\top w\le B$ such that some shortest $s$--$t$ path with respect to the modified arc-lengths $d+w$
contains $r$. We proceed with the proof in three steps.
\begin{enumerate}[label=(\arabic*), leftmargin=1.5em]
\item \textbf{Restating the hard instance and key properties:} recall the explicit \textsc{3-SAT}$\to$\textsc{PISPP-W+} construction and the structural properties we will use.
\item \textbf{Openifying the uncertainty set:} show that passing from the closed budgeted set $\mathcal{C}_{\mathrm{cl}}$ to the slightly relaxed open set $\mathcal{C}_{\mathrm{op}}$ does not change the answer to the PISPP-W+ decision question on these instances.
\item \textbf{Dimension comparison:} encode $s$--$t$ paths as extreme points of a flow polytope and reduce the PISPP-W+ decision question to the strict inequality \eqref{eq:np-hard-compare}.
\end{enumerate}

\smallskip
\noindent\textbf{Step 1: Hard instance and structural properties.}
We first restate the explicit \textsc{3-SAT}$\to$\textsc{PISPP-W+} construction in the notation needed here.
Let $\varphi$ have variables $x_1,\dots,x_n$ and clauses $B_1,\dots,B_m$.
Write each clause as $B_j=\{\ell_{j1},\ell_{j2},\ell_{j3}\}$ where each literal
$\ell_{jk}\in\{x_i,\bar x_i:\ i\in[n]\}$ (if a clause has fewer than three literals, duplicate one).
For each $\ell_{jk}$ we create a \emph{clause--literal vertex} $b_{jk}$ labeled by $\ell_{jk}$.

\paragraph{Vertices.}
Let $V$ consist of
\[
\{s_0,\dots,s_n\}\ \cup\ \{t_0,\dots,t_m\}\ \cup\ \{x_i,\bar x_i:\ i\in[n]\}\ \cup\ \{b_{jk}:\ j\in[m],\,k\in[3]\}.
\]
Set the source $s:=s_0$ and the sink $t:=t_m$.

\paragraph{Arcs.}
We build a DAG $G=(V,A)$ with a variable layer (from $s_0$ to $s_n$) and a clause layer (from $t_0$ to $t_m$),
connected by one \emph{required} arc and additional \emph{shortcut} arcs.
\begin{itemize}[leftmargin=1.25em]
\item \emph{Variable gadgets:} for each $i\in[n]$, add the four arcs
\[
(s_{i-1},x_i),\ (x_i,s_i),\ (s_{i-1},\bar x_i),\ (\bar x_i,s_i).
\]
Thus, any $s_0$--$s_n$ path chooses exactly one of $\{x_i,\bar x_i\}$ for each $i$.

\item \emph{Clause gadgets:} for each $j\in[m]$ and $k\in[3]$, add
\[
(t_{j-1},b_{jk}),\ (b_{jk},t_j).
\]
Thus, any $t_0$--$t_m$ path chooses exactly one literal vertex $b_{jk}$ per clause $j$.

\item \emph{Required arc:} add $r:=(s_n,t_0)$, which is the unique required arc ($R=\{r\}$).

\item \emph{Shortcut arcs:}
for each clause--literal vertex $b_{jk}$ labeled by a literal on variable $x_i$, add exactly one arc
from the \emph{opposite} variable vertex into $b_{jk}$:
\[
\ell_{jk}=x_i\ \Rightarrow\ (\bar x_i,b_{jk})\in A,
\qquad
\ell_{jk}=\bar x_i\ \Rightarrow\ (x_i,b_{jk})\in A.
\]
Equivalently, the tail of the shortcut arc is the variable vertex that makes $\ell_{jk}$ \emph{false}.
\end{itemize}

\paragraph{Initial arc-lengths and modification costs.}
Set the initial lengths $d$ by
\[
d(a)=1\ \text{ for every non-shortcut arc }a,
\qquad
d(a)=2(n-i+j)\ \text{ for a shortcut arc }a=(\cdot,b_{jk})\text{ with }\ell_{jk}\in\{x_i,\bar x_i\}.
\]
Let the budget be $B:=n+2m$ and set modification costs $\kappa$ by
\[
\kappa_a=1\ \text{ for every non-shortcut arc }a,
\qquad
\kappa_a=B+1\ \text{ for every shortcut arc }a.
\]

\begin{lemma}[Structural properties]
\label{lem:ley-merkert-props}
The above instance satisfies the following properties (cf.\ Observations~1--4 and the claim in the proof of \cite[Theorem~3.1]{ley2025solutionmethods}):

\begin{enumerate}[label=(\roman*), leftmargin=1.5em]
\item \textbf{Degree constraints.}
Each $x_i$ and $\bar x_i$ has exactly one ingoing arc (from $s_{i-1}$), and each $b_{jk}$ has exactly one outgoing arc (to $t_j$).

\item \textbf{Path structure (``$r$ or one shortcut'').}
Every $s$--$t$ path contains either the required arc $r$ or exactly one shortcut arc (but not both).

\item \textbf{Unit gap under $d$.}
Every $s$--$t$ path using $r$ has length $2n+2m+1$ under $d$, while every $s$--$t$ path using a shortcut arc has
length $2n+2m$ under $d$.
In particular (with $w=0$), all shortest $s$--$t$ paths avoid $r$.

\item \textbf{Encoding of assignments + ``false literal $\Rightarrow$ available shortcut''.}
If an $s$--$t$ path $P$ uses $r$, then it visits exactly one of $\{x_i,\bar x_i\}$ for every $i\in[n]$
and exactly one $b_{jk}$ in every clause gadget $j\in[m]$.
Define the induced truth assignment by $x_i^P=1$ iff $x_i\in P$.
If $P$ visits a literal vertex $b_{jk}$ that is false under $x^P$, then the unique shortcut arc entering $b_{jk}$
has its tail on $P$.

\item \textbf{3-SAT $\Leftrightarrow$ PISPP-W+.} The mapping
$\varphi \mapsto (G,d,\kappa,B,r)$ is computable in polynomial time and satisfies the following
equivalence:
the formula $\varphi$ is satisfiable if and only if there exists a modification vector
$w\in\mathbb{Q}^A_{\ge 0}$ with $\kappa^\top w \le B$ such that some shortest $s$--$t$ path
with respect to the modified arc-lengths $d+w$ contains the required arc $r$.
Equivalently, if $\varphi$ is unsatisfiable then for every $w\in\mathbb{Q}^A_{\ge 0}$ with
$\kappa^\top w \le B$, every shortest $s$--$t$ path under $d+w$ avoids $r$.
\end{enumerate}
\end{lemma}
\begin{proof}
(i) is immediate from the arc construction.
(ii) The only arcs that can enter the clause layer are $r=(s_n,t_0)$ and the shortcut arcs into some $b_{jk}$.
Once the path enters the clause layer, it cannot return to the variable layer (there are no such arcs),
and each $b_{jk}$ has only the outgoing arc $(b_{jk},t_j)$; hence no $s$--$t$ path can use two shortcuts, and no path can use both
$r$ and a shortcut.
(iii) Any $s$--$t$ path using $r$ traverses $2n$ unit-length arcs in the variable gadgets, then $r$ (length $1$),
then $2m$ unit-length arcs in the clause gadgets, totaling $2n+2m+1$.
A path using a shortcut from variable $i$ into clause $j$ has prefix length $2(i-1)+1$ up to the tail variable vertex,
then shortcut length $2(n-i+j)$, then suffix length $1+2(m-j)$ in the clause layer, totaling
\[
(2(i-1)+1)+2(n-i+j)+(1+2(m-j))=2n+2m.
\]

(iv) The first statement follows since each variable gadget offers exactly two disjoint choices and each clause gadget offers exactly three disjoint choices.
For the last statement, if $b_{jk}$ is false under $x^P$, then $P$ must have visited the opposite variable vertex
($\bar x_i$ if $\ell_{jk}=x_i$, and $x_i$ if $\ell_{jk}=\bar x_i$), which is exactly the tail of the unique shortcut arc into $b_{jk}$.

(v) is exactly the claim of Theorem~3.1 in \citet{ley2025solutionmethods}, which shows NP-completeness of PISPP-W+. We provide a proof sketch here:

($\Rightarrow$) Let $x^*$ satisfy $\varphi$. W.l.o.g.\ reorder literals in each clause so that $b_{j1}$ is true.
Build the $s$--$t$ path $P$ that uses $r$ by following $x^*$ in the variable gadgets and $b_{j1}$ in each clause gadget.
Set $w=1$ on the unique entry arc into the unchosen variable vertex (one per variable) and set $w=1$ on
$(b_{j2},t_j)$ and $(b_{j3},t_j)$ (two per clause); set $w=0$ on all remaining arcs (in particular on shortcut arcs).
Then $\kappa^\top w = n+2m = B$, and any shortcut path must traverse at least one penalized arc, so its length increases by $\ge 1$,
closing the initial gap $d(Q)=d(P)-1$ and implying $P$ is (one of) the shortest paths.

($\Leftarrow$) Suppose $\kappa^\top w\le B$ and some shortest path $P$ under $d+w$ uses $r$; let $x^P$ be the assignment induced by $P$.
If some clause is false under $x^P$, then the visited literal vertex $b_{jk}$ is false, and by (iv) the entering shortcut arc $e$
has its tail on $P$. Replacing the corresponding subpath $R$ of $P$ by $e$ gives a shortcut path $Q$ with $d(Q)=d(P)-1$ (by (iii)).
Thus $(d+w)(Q)-(d+w)(P) = -1 + w(e) - w(R) \le -1 + w(e)$, so shortestness of $P$ forces $w(e)\ge 1$.
Since $e$ is a shortcut arc with $\kappa_e=B+1$, this implies $\kappa^\top w \ge \kappa_e w(e) > B$, a contradiction.
\end{proof}

\smallskip
\noindent\textbf{Step 2: Openifying the uncertainty set does not change the answer.}
This step is only needed to ensure hardness persists for an open uncertainty set.
We consider two uncertainty sets of admissible arc-length vectors:
\[
\mathcal{C}_{\mathrm{cl}}:=\{\,c=d+w:\ w\in\mathbb{R}^A_{\ge 0},\ \kappa^\top w\le B\,\}
\quad\text{and}\quad
\mathcal{C}_{\mathrm{op}}:=\{\,c=d+w:\ w\in\mathbb{R}^A_{>0},\ \kappa^\top w < B+\eta\,\},
\]
where we fix any rational constant $\eta\in(0,1)$ (e.g. $\eta=\tfrac12$).
The lemma below shows that for the hard instances above, replacing $\mathcal{C}_{\mathrm{cl}}$ by $\mathcal{C}_{\mathrm{op}}$
does not change the answer to the underlying question.

\begin{lemma}
\label{lem:openify-pispp}
Fix any $\eta\in(0,1)$. For the PISPP-W+ instance produced by \citet{ley2025solutionmethods} from $\varphi$,
the formula $\varphi$ is satisfiable if and only if there exists $c\in\mathcal{C}_{\mathrm{op}}$ such that
some shortest $s$--$t$ path with respect to $c$ contains $r$.
\end{lemma}
\begin{proof}
($\Rightarrow$)  Suppose $\varphi$ is satisfiable. By \Cref{lem:ley-merkert-props}(v),
there exists $w_0\in\mathbb{Q}^A_{\ge 0}$ with $\kappa^\top w_0 \le B$ such that for
$c_0 := d+w_0$ there is a shortest $s$--$t$ path using $r$.

Since $G$ is a DAG, fix any topological ordering $\rho:V\to\{0,1,\dots,|V|-1\}$.
For (a rational) $\varepsilon>0$, define a perturbation $\delta\in\mathbb{Q}^A$ by
\[
\delta(u,v):=\varepsilon(\rho(v)-\rho(u))\qquad\forall (u,v)\in A.
\]
Then $\delta>0$ componentwise. Moreover, for any $s$--$t$ path
$P=(v_0=s,v_1,\dots,v_k=t)$, the perturbation telescopes:
\[
\sum_{i=0}^{k-1}\delta(v_i,v_{i+1})
=\varepsilon\sum_{i=0}^{k-1}(\rho(v_{i+1})-\rho(v_i))
=\varepsilon(\rho(t)-\rho(s)),
\]
which is a constant independent of $P$. Hence, adding $\delta$ shifts the length of every
$s$--$t$ path by the same constant, so the set of shortest $s$--$t$ paths is unchanged when we
replace $c_0$ by $c_0+\delta$.

Finally, choose $\varepsilon$ small enough so that $\kappa^\top\delta<\eta$. For instance, since
$\rho(v)-\rho(u)\le |V|-1$ for every arc,
\[
\kappa^\top\delta=\sum_{a\in A}\kappa_a\delta_a
\le \varepsilon(|V|-1)\sum_{a\in A}\kappa_a,
\]
so it suffices to take $\varepsilon:=\eta/\bigl(2(|V|-1)\sum_{a\in A}\kappa_a\bigr)$.
With this choice, $w:=w_0+\delta$ satisfies $w>0$ and $\kappa^\top w < B+\eta$, i.e.
$c:=d+w\in \mathcal{C}_{\rm op}$, and there still exists a shortest path using $r$.

($\Leftarrow$) We prove the contrapositive. Suppose $\varphi$ is not satisfiable, assume that there exists a shortest $s$--$t$ path $P$ with respect to
$c:=d+w\in \mathcal{C}_{\rm op}$ that contains $r$.

By \Cref{lem:ley-merkert-props}(ii), $P$ contains no shortcut arc. Define the induced assignment $x^P$ as in
\Cref{lem:ley-merkert-props}(iv). Since $\varphi$ is unsatisfiable, there exists a clause index $j$ that is not satisfied
by $x^P$. By \Cref{lem:ley-merkert-props}(iv), $P$ visits exactly one literal vertex $b_{jk}$ in clause gadget $j$;
because clause $j$ is unsatisfied, this visited literal vertex $b_{jk}$ is false under $x^P$.
Therefore \Cref{lem:ley-merkert-props}(iv) yields that the unique shortcut arc $e$ entering $b_{jk}$ has its tail on $P$.

Let $R$ denote the (nonempty) subpath of $P$ from the tail of $e$ to the vertex $b_{jk}$, and define
a new $s$--$t$ path $Q$ by following $P$ up to the tail of $e$, then traversing $e$, and then
following $P$ from $b_{jk}$ to $t$. Then $Q$ uses a shortcut arc, so by \Cref{lem:ley-merkert-props}(iii) we have
$d(Q)=d(P)-1$.

Since $P$ and $Q$ coincide outside of $R$ and $e$, we obtain
\[
(d+w)(Q)-(d+w)(P)=(d(Q)-d(P))+(w(Q)-w(P))=-1+w(e)-w(R).
\]
Because $R$ contains at least one arc and $w>0$ componentwise, we have $w(R)>0$.
Thus if $P$ is shortest we must have $0\le (d+w)(Q)-(d+w)(P)$, implying $w(e)>1$.

As $e$ is a shortcut arc, $\kappa_e=B+1$, hence
\[
\kappa^\top w \ge \kappa_e w(e) > (B+1)\cdot 1 = B+1.
\]
In particular, for any $\eta\in(0,1)$ we have $\kappa^\top w > B+\eta$, contradicting the
assumption $\kappa^\top w < B+\eta$ required for $d+w\in \mathcal{C}_{\rm op}$. Therefore, no such
$w>0$ can make a shortest $s$--$t$ path use $r$.
\end{proof}

\smallskip
\noindent\textbf{Step 3: Reduction from \textsc{PISPP-W+} to a comparison of decision-relevant dimensions.}
We now translate the PISPP-W+ instance into a linear optimization problem over the $s$--$t$ unit-flow polytope. The coordinate indexed by the required arc $r$
will play the role of the distinguished coordinate in \eqref{eq:np-hard-compare}, and we will compare (the dimensions of) reachable optimal directions with and without imposing $x_r=0$.

Given the instance $(G,d,\kappa,B,r)$, let $p:=|A|$ and index coordinates of $\mathbb{R}^p$ by arcs.
Define the $s$--$t$ unit-flow polytope
\[
X:=\Bigl\{x\in\mathbb{R}^p_{\ge 0}:\ \sum_{a\in\delta^+(v)}x_a-\sum_{a\in\delta^-(v)}x_a=
\begin{cases}
1 & v=s,\\
-1& v=t,\\
0 & \text{otherwise,}
\end{cases}\Bigr\}.
\]
Because $G$ is acyclic, the extreme points of $X$ are exactly the incidence vectors of $s$--$t$ paths.
Let $X_r:=\{x\in X:\ x_r=0\}$ be the face of flows that avoid the required arc. Note that $X_r$ is a face of $X$, hence
\begin{equation}
\label{eq111}
X_r^\angle = X^\angle \cap X_r = \{x\in X^\angle:\ x_r=0\}.
\end{equation}
For $\mathcal{C}\in\{\mathcal{C}_{\mathrm{cl}},\mathcal{C}_{\mathrm{op}}\}$, define the reachable optimal sets
$X^\star(\mathcal{C})$ and $X_r^\star(\mathcal{C})$ as in \Cref{def:dir-xstar}. 

The next lemma is the key structural property for the dimension comparison: it identifies the reachable optimal extreme points of the restricted face $X_r$ and shows that all of them are also reachable for $X$.

\begin{lemma}
\label{lem:identify-xrstar}
For $\mathcal{C}\in\{\mathcal{C}_{\mathrm{cl}},\mathcal{C}_{\mathrm{op}}\}$ constructed above, we have
\[
X_r^\star(\mathcal{C})=X_r^\angle
\qquad\text{and}\qquad
X_r^\angle\subseteq X^\star(\mathcal{C}).
\]
\end{lemma}
\begin{proof}
For $\mathcal{C}=\mathcal{C}_{\mathrm{cl}}$, let $c^0:=d\in\mathcal{C}$ (take $w=0$).
For $\mathcal{C}=\mathcal{C}_{\mathrm{op}}$, let $\delta$ be the topological perturbation from the proof of
Lemma~\ref{lem:openify-pispp} and set $c^0:=d+\delta\in\mathcal{C}_{\mathrm{op}}$.
In either case, by the unit-gap property under $d$ and by telescoping of $\delta$, every $s$--$t$ path that avoids $r$ is
shortest under $c^0$, and every path that uses $r$ is strictly longer. Combine with \eqref{eq111}, we have
\[
X^\star(c^0)=\{x\in X^\angle:x_r=0\}=X_r^\angle,
\]
and in particular $X_r^\angle\subseteq X^\star(\mathcal{C})$.

Moreover, for this same $c^0$, every extreme point of $X_r$ (i.e., every $s$--$t$ path avoiding $r$) is optimal for
$\min\{(c^0)^\top x:x\in X_r\}$, so $X_r^\angle\subseteq X_r^\star(\mathcal{C})$.
The reverse inclusion $X_r^\star(\mathcal{C})\subseteq X_r^\angle$ holds by definition, hence $X_r^\star(\mathcal{C})=X_r^\angle$.
\end{proof}

We are now ready to complete the reduction. To prove \Cref{thm:np-hard-dir-dim-formal}, it suffices to establish the following claim.

\begin{claim}
\label{clm:sat-iff-dim}
The formula $\varphi$ is satisfiable if and only if
$\dim\dir(X^\star(\mathcal{C}))>\dim\dir(X_r^\star(\mathcal{C}))$.
\end{claim}
\begin{proof}
By \cref{lem:identify-xrstar}, we always have $X_r^\star(\mathcal{C})\subseteq X^\star(\mathcal{C})$, hence
\begin{equation}
\label{eq:dir-inclusion}
\dir(X_r^\star(\mathcal{C}))\subseteq \dir(X^\star(\mathcal{C})).
\end{equation}

\smallskip
\emph{($\Rightarrow$)} 
Assume $\varphi$ is satisfiable.
We claim that there exists $c^+\in\mathcal{C}$ such that some shortest $s$--$t$ path w.r.t.\ $c^+$ contains $r$.
If $\mathcal{C}=\mathcal{C}_{\mathrm{cl}}$, this follows from \Cref{lem:ley-merkert-props}(v)
(equivalently, \citet[Theorem~3.1]{ley2025solutionmethods});
if $\mathcal{C}=\mathcal{C}_{\mathrm{op}}$, it follows from Lemma~\ref{lem:openify-pispp}.

Let $x^+\in X^\star(c^+)\cap X^\angle\subseteq X^\star(\mathcal{C})$ be the incidence vector of such a shortest path,
so $x^+_r=1$.
By Lemma~\ref{lem:identify-xrstar}, we also have $X_r^\angle\subseteq X^\star(\mathcal{C})$; pick any $x^0\in X_r^\angle$,
so $x^0_r=0$.
Then $v:=x^+-x^0\in\dir\!\bigl(X^\star(\mathcal{C})\bigr)$ and satisfies $v_r=1$.

On the other hand, every $u\in\dir\!\bigl(X_r^\star(\mathcal{C})\bigr)$ satisfies $u_r=0$, and therefore
$v\notin\dir\!\bigl(X_r^\star(\mathcal{C})\bigr)$.
Combining this with \eqref{eq:dir-inclusion} yields
\[
\dir\!\bigl(X_r^\star(\mathcal{C})\bigr)\subsetneq \dir\!\bigl(X^\star(\mathcal{C})\bigr),
\]
and thus $\dim\dir(X^\star(\mathcal{C}))>\dim\dir(X_r^\star(\mathcal{C}))$.

\smallskip
\emph{($\Leftarrow$)} We prove the contrapositive.
Assume $\varphi$ is not satisfiable. Then, for $\mathcal{C}=\mathcal{C}_{\mathrm{cl}}$ the corresponding PISPP-W+ instance
is infeasible by \citet{ley2025solutionmethods}, and for $\mathcal{C}=\mathcal{C}_{\mathrm{op}}$ it is infeasible by
Lemma~\ref{lem:openify-pispp}.
Equivalently, for every $c\in\mathcal C$, every extreme optimal solution of $\min\{c^\top x:x\in X\}$
avoids the arc $r$, i.e., satisfies $x_r=0$.
Since $X_r$ is a face of $X$, we have $X_r^\angle=\{x\in X^\angle:x_r=0\}$, and therefore
\[
X^\star(c)\cap X^\angle \subseteq X_r^\angle\qquad\forall\,c\in\mathcal C.
\]
Taking the union over $c\in\mathcal C$ gives $X^\star(\mathcal C)\subseteq X_r^\angle$.
On the other hand, Lemma~\ref{lem:identify-xrstar} yields $X_r^\angle\subseteq X^\star(\mathcal C)$ and
$X_r^\star(\mathcal C)=X_r^\angle$.
Hence $X^\star(\mathcal C)=X_r^\star(\mathcal C)$, and thus
$\dir(X^\star(\mathcal C))=\dir(X_r^\star(\mathcal C))$.
\end{proof}

This completes the reduction. Therefore deciding whether \eqref{eq:np-hard-compare} holds is $\mathsf{NP}$-hard.

Finally, to deduce the hardness of computing $\dim\dir(X^\star(\mathcal{C}))$ as a function problem, note that if we could compute
$\dim\dir(X^\star(\mathcal{C}))$ in polynomial time, we could compute both sides of \eqref{eq:np-hard-compare} (one call on $(X,\mathcal{C})$
and one call on $(X_r,\mathcal{C})$) and decide the inequality, implying $\mathsf{NP}$-hardness under polynomial-time Turing reductions.
\end{proof}

\subsection{Proof of Theorem~\ref{thm:coNP-hard-pointwise-check}}
\label{app:coNP-hard-pointwise-check}
\begin{theorem}[Formal version of Theorem~\ref{thm:coNP-hard-pointwise-check}]
The following decision problem is $\mathsf{coNP}$-hard: given a bounded polytope $X\subseteq\RR^n$, a polyhedral uncertainty set $\mathcal{C}\subseteq\RR^n$ specified in $H$-representation, a dataset $\Dset$,
and a cost vector $c\in\mathcal{C}$, decide whether $\Dset$ is pointwise sufficient at $c$ in the sense of Definition~\ref{def:pointwise-sdd}.
Consequently, computing the size of a minimum pointwise SDD (and the corresponding search problem of finding one) is
$\mathsf{coNP}$-hard.
\end{theorem}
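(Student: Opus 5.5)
We reduce from $H$-in-$V$ polytope containment, following the sketch given after \Cref{thm:coNP-hard-pointwise-check}. The source problem asks, given $P=[-1,1]^k$ and $Q=\conv\{v_1,\dots,v_M\}\subseteq\RR^k$ with $0\in\operatorname{int}(Q)$, whether $P\subseteq Q$. This is $\mathsf{coNP}$-complete \citep{freundorlin1985complexity,gritzmannklee1993radii}. We map an instance to $(X,\C,\Dset,c)$ so that $\Dset$ is pointwise sufficient at $c$ if and only if $P\subseteq Q$. The reduction is a polynomial-time many-one reduction.

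\textbf{Step 1: encode $Q$ as an optimality cone.} We want a standard-form polytope $X=\{z:Az=b,\ z\ge0\}$ with a vertex $x_0$ whose optimality cone, restricted to the coordinates $(y,1)\in\RR^{k+1}$ (other cost coordinates zero), equals $\cone\{(v_i,1)\}_{i=1}^M$.

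\begin{itemize}
\item Take the polar-type polytope $R:=\{u\in\RR^{k+1}: (v_i,1)^\top u\le 0\ \forall i,\ \text{box bounds}\}$ and let the vertex be $u=0$.
\item The normal cone of $-(\cdot)$ at $0$ is generated by the constraint normals $(v_i,1)$. Hence $0$ minimizes $(y,1)^\top u$ over $R$ if and only if $(y,1)\in\cone\{(v_i,1)\}$, by Farkas/LP duality.
\item Add bounds $-L\le u\le L$ so that $R$ is bounded. Because $0\in\operatorname{int}Q$, the cone $\cone\{(v_i,1)\}$ contains $(0,1)$ in its interior, so the box constraints are not active at $0$ and do not alter the local cone.
\item Convert to standard form by splitting $u=u^+-u^-$ and adding slacks for all constraints. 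The extra coordinates get zero cost, which explains the $((y,1),0,0)$ pattern.
\end{itemize}

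We must check that the optimality cone of the lifted vertex $x_0$, intersected with the subspace where the extra coordinates are zero, is exactly the original normal cone. Splitting and slacks add no new optimal directions for such costs, so this holds. All data are rational and of polynomial size.

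\textbf{Step 2: define the instance.} Set
\begin{itemize}
\item $\C:=\{((y,1),0,0): y\in[-1,1]^k\}$, written in $H$-representation with box inequalities plus equalities (as paired inequalities);
\item $\Dset:=\varnothing$;
\item $c:=((0,1),0,0)\in\C$.
\end{itemize}
Then the fiber is all of $\C$.

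Since $(0,1)$ lies in the interior of the cone, $c$ lies in the relative interior of the optimality region of $x_0$ within the $(y,1)$ slice. I must show $x_0$ is the \emph{unique} minimizer at $c$. This holds if $(0,1)$ strictly separates, i.e., $(0,1)^\top u<0$ on all other vertices, which follows from interiority. Any lifted vertices with the same $u$ differ only through split/slack coordinates that carry zero cost. To avoid ties there, I will use a free-variable encoding with care, or add a tiny positive cost on the $u^-$ copies. This uniqueness/nondegeneracy-of-lifting bookkeeping is the main obstacle, and I will handle it by choosing the standard-form lift so that each $u$ corresponds to a unique vertex.

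\textbf{Step 3: equivalence.} By \Cref{def:pointwise-sdd} and the uniqueness of $x_0$ at $c$, pointwise sufficiency at $c$ holds if and only if $x_0\in\X^\star(c')$ for all $c'\in\C$. This is equivalent to $\C\subseteq\Lambda(x_0)$. By Step 1, that holds if and only if $(y,1)\in\cone\{(v_i,1)\}$ for all $y\in P$, i.e., if and only if $P\subseteq Q$.

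\textbf{Consequence.} Deciding whether the minimum pointwise SDD size is $0$ answers the same question, which yields the stated hardness of computing the minimum size and of the search version.
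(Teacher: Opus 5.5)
Your reduction has the same architecture as the paper's. It uses the restricted $H$-in-$V$ containment problem with $P=[-1,1]^k$ and $0\in\operatorname{int}(Q)$, the slice prior $\C=\{((y,1),0,0):y\in P\}$, $\Dset=\varnothing$, the reference cost $((0,1),0,0)$, and the same Step~3 logic. The gap is in Step~1, the concrete standard-form lift. That step carries the whole reduction, and as written it does not produce a valid instance.

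\begin{itemize}
\item \emph{The sign is reversed.} On $R=\{u:(v_i,1)^\top u\le 0\ \forall i\}$, the point $u=0$ minimizes $(y,1)^\top u$ iff $-(y,1)\in\cone\{(v_i,1)\}$. This never holds, since the last coordinate is $-1$; for example, $u=(0,\dots,0,-L)\in R$ has objective $-L<0$. You need $(v_i,1)^\top u\ge 0$.
\item \emph{The polyhedron is unbounded.} Splitting $u=u^+-u^-$ with the box imposed only on $u$ allows $u^+=u^-=\lambda e_j$ for every $\lambda\ge 0$. The statement requires $X$ to be a bounded polytope.
\item \emph{The cost pattern is wrong.} With zero cost on the $u^-$ block (your $((y,1),0,0)$ pattern), the objective is $(y,1)^\top u^+$, not $(y,1)^\top u$. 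Raising $u^+_j$ and $u^-_j$ together leaves $u$ fixed but changes the objective by $y_j$. So $x_0$ is not optimal for any $y\in P$ with a negative coordinate, even when $P\subseteq Q$; with the box only on $u$, the LP is even unbounded below. Your claim that ``splitting adds no new optimal directions'' is therefore false, and YES instances are mapped to NO instances.
\end{itemize}

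Your proposed patch, a small positive cost on the $u^-$ copies, does not repair this. It changes the objective to $(y,1)^\top u+\epsilon\,\1^\top u^-$ and so perturbs exactly the cone you are trying to encode. Showing that the containment answer survives would need a separate argument.

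The missing idea is to shift rather than split. Take the box to be $[-1,1]^{k+1}$, set $w:=u+\1\ge 0$, and use the paper's polytope
$X=\{(w,r,s):w+r=2\cdot\1,\ \bar Vw-s=\beta,\ (w,r,s)\ge 0\}$, where $\bar V$ has rows $\bar v_i^\top$ with $\bar v_i=(v_i,1)$ and $\beta=\bar V\1$. This gives you the following.
\begin{itemize}
\item $r=2\cdot\1-w$ and $s=\bar V(w-\1)$ are determined by $w$, so $X$ is bounded and the map $z\mapsto w$ is injective.
\item The cost $((y,1),0,0)$ has objective $(y,1)^\top u$ plus a constant.
\item Uniqueness of the minimizer $u=0$ at $(0,1)$ transfers immediately to uniqueness of $z_0=(\1,\1,0)$. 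That uniqueness comes from your interiority argument with the corrected sign, or from the paper's direct argument via $0\in\operatorname{int}(Q)$.
\item The KKT conditions at $z_0$ show $((\tilde y,\tilde t),0,0)\in\Lambda(z_0)$ iff $(\tilde y,\tilde t)\in\cone\{\bar v_1,\dots,\bar v_M\}$.
\end{itemize}

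Uniqueness in the lifted space is also not the obstacle you took it to be. Whenever the lifted objective depends on $z$ only through $u$, any decision optimal for all of $\C$ must project to the unique $u$-space minimizer $0$ at $c$. Every lift of $u=0$ is then optimal for exactly the costs with $y\in Q$. So your split encoding would also work, with harmless ties, once $u^+$ and $u^-$ are boxed individually and $u^-$ carries cost $-(y,1)$.
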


\paragraph{The $H$-in-$V$ polytope containment problem.}
An instance consists of two polytopes $P,Q\subseteq\RR^d$, where
\[
P=\{z\in\RR^d:\ Hz\le h\}\qquad\text{and}\qquad Q=\conv\{v_1,\dots,v_M\},
\]
i.e., $P$ is given in \emph{$H$-representation} and $Q$ is given in \emph{$V$-representation}.
The decision problem asks whether $P\subseteq Q$.
This problem is $\mathsf{coNP}$-complete \citep{freundorlin1985complexity}.
Moreover, the $\mathsf{coNP}$-hardness persists under strong structural restrictions; in particular, it is already
$\mathsf{coNP}$-complete to decide whether the standard cube is contained in an affine image of a cross polytope
\citep{gritzmannklee1993radii}. A convenient modern reference is \citet[Proposition~2.1]{kellner2016sos}.
In our reduction, we work with the following convenient hard family: given $Q=\conv\{v_1,\dots,v_M\}\subseteq\RR^d$
that is full-dimensional and satisfies $0\in\operatorname{int}(Q)$, decide whether the hypercube $P=[-1,1]^d$ is
contained in $Q$.

\paragraph{Construction of a standard-form LP instance.}
Let $P=[-1,1]^d$ and $Q=\conv\{v_1,\dots,v_M\}$ be such a hard instance.
Set $n_0:=d+1$ and define the homogenized vectors $\bar v_i:=(v_i,1)\in\RR^{n_0}$.
Let $\bar V\in\RR^{M\times n_0}$ be the matrix whose $i$th row is $\bar v_i^\top$, and set $\beta:=\bar V\1\in\RR^M$, where 
$\1$ denotes the all-ones vector. 
We introduce variables $w,r\in\RR^{n_0}$ and $s\in\RR^M$, and write $z:=(w,r,s)\in\RR^n$ with $n:=2n_0+M$.
Define the standard-form polytope
\begin{equation}
\label{eq:pointwise-hard-X}
X
:=\Bigl\{z\in\RR^n:\ 
\underbrace{\begin{bmatrix}
I_{n_0} & I_{n_0} & 0\\
\bar V  & 0      & -I_M
\end{bmatrix}}_{=:A}\,
z
=
\underbrace{\begin{bmatrix}
2\cdot \1\\
\beta
\end{bmatrix}}_{=:b},
\ \ z\ge 0
\Bigr\}.
\end{equation}
The matrix $A$ has full row rank, and $X$ is nonempty and bounded (indeed, $w+r=2\cdot \1$ implies $0\le w\le 2\cdot \1$ and $0\le r\le 2\cdot \1$,
while $s=\bar Vw-\beta$ is then bounded as well).
Let
\[
z_0:=(\1,\1,0)\in X.
\]
We consider the empty dataset $\Dset=\varnothing$ and the cost vector
\[
c_0 := (e_{n_0},0,0)\in\RR^{n},
\]
where $e_{n_0}\in\RR^{n_0}$ is the last standard basis vector.
Finally, define the polyhedral uncertainty set
\[
\mathcal{C}:=\bigl\{\,((y,1),0,0)\in\RR^n:\ y\in P\,\bigr\}.
\]
Note that $c_0\in\mathcal{C}$ since $0\in[-1,1]^d$.

We claim that $\Dset=\varnothing$ is pointwise sufficient at $c_0$ if and only if $P\subseteq Q$.

\paragraph{Step 1: the optimality cone at $z_0$.}
For a standard-form polytope $X=\{z:Az=b,\ z\ge0\}$ and an extreme point $z_0$, the optimality cone can be written via KKT as
\begin{equation}
\label{eq:Lambda-standard-form}
\Lambda(z_0)
=
\Bigl\{c\in\RR^n:\ \exists\,y,\rho\ \text{s.t.}\ c=A^\top y+\rho,\ \rho\ge0,\ \rho_j=0\ \text{whenever}\ (z_0)_j>0\Bigr\}.
\end{equation}
Here $z_0=(\1,\1,0)$ has strictly positive components on the $w$- and $r$-coordinates and zeros on the $s$-coordinates.
Writing $y=(\alpha,\mu)\in\RR^{n_0}\times\RR^M$, we have
\[
A^\top y = \bigl(\alpha+\bar V^\top\mu,\ \alpha,\ -\mu\bigr).
\]
Therefore, for costs of the form $((\tilde y,\tilde t),0,0)$ we obtain the characterization
\begin{equation}
\label{eq:Lambda-slice}
((\tilde y,\tilde t),0,0)\in\Lambda(z_0)
\quad\Longleftrightarrow\quad
(\tilde y,\tilde t)\in\cone\{\bar v_1,\dots,\bar v_M\}.
\end{equation}

\paragraph{Step 2: $z_0$ is the unique minimizer for $c_0$.}
Let $u:=(x,t):=w-\1\in\RR^d\times\RR$.
From \eqref{eq:pointwise-hard-X}, feasibility implies $w\in[0,2]^{n_0}$ and
\[
s_i=\bar v_i^\top w-\beta_i=\bar v_i^\top(w-\1)=v_i^\top x+t\ \ge 0
\qquad\forall i\in[M].
\]
Thus the projection $z\mapsto u=w-\1$ identifies $X$ with the bounded polytope
\[
\widetilde X
:=\{(x,t)\in\RR^{d}\times\RR:\ v_i^\top x+t\ge 0\ \forall i\in[M],\ \ -1\le (x,t)\le 1\}.
\]
Moreover, minimizing $c_0^\top z$ over $X$ is equivalent (up to an additive constant) to minimizing $t$ over $\widetilde X$,
since $c_0^\top z = e_{n_0}^\top w = t+1$.

We now show that $(0,0)$ is the unique minimizer of $\min\{t:(x,t)\in\widetilde X\}$.
First, we claim that every feasible point satisfies $t\ge 0$.
Indeed, if $t<0$ then $v_i^\top x\ge -t>0$ for all $i$, hence $z^\top x>0$ for all $z\in Q=\conv\{v_1,\dots,v_M\}$.
But $0\in\operatorname{int}(Q)$ implies that for any $x\neq 0$ there exists $\varepsilon>0$ such that
$-\varepsilon x/\|x\|\in Q$, which yields $(-\varepsilon x/\|x\|)^\top x<0$, a contradiction. Thus $t\ge 0$.

Therefore the minimum value of $t$ equals $0$ (since $(0,0)\in\widetilde X$).
When $t=0$, feasibility requires $v_i^\top x\ge 0$ for all $i$, which again forces $x=0$ by the same argument using
$0\in\operatorname{int}(Q)$.
Hence $(x,t)=(0,0)$ is the unique minimizer in $\widetilde X$, and consequently $z_0=(\1,\1,0)$ is the unique minimizer in $X$:
\[
X^\star(c_0)=\{z_0\}.
\]

\paragraph{Step 3: pointwise sufficiency reduces to cone containment.}
Since $\Dset=\varnothing$, the data-consistent fiber equals $\mathcal C$.
Pointwise sufficiency at $c_0$ requires a decision $z^\star\in X$ that is optimal for all costs in $\mathcal C$
(and in particular for $c_0$).
By Step~2, this forces $z^\star=z_0$.
Therefore,
\[
\Dset=\varnothing\ \text{is pointwise sufficient at }c_0
\quad\Longleftrightarrow\quad
z_0\in X^\star(c)\ \ \forall c\in\mathcal{C}
\quad\Longleftrightarrow\quad
\mathcal{C}\subseteq \Lambda(z_0).
\]

\paragraph{Step 4: $\mathcal{C}\subseteq \Lambda(z_0)$ iff $P\subseteq Q$.}
By \eqref{eq:Lambda-slice}, for any $y\in\RR^d$ we have
\[
((y,1),0,0)\in \Lambda(z_0)
\quad\Longleftrightarrow\quad
(y,1)\in \cone\{\bar v_1,\dots,\bar v_M\}
\quad\Longleftrightarrow\quad
y\in Q,
\]
where the last equivalence uses that $(y,1)=\sum_{i=1}^M\alpha_i(v_i,1)$ with $\alpha_i\ge 0$ holds if and only if
$\sum_{i=1}^M\alpha_i=1$ and $y=\sum_{i=1}^M\alpha_i v_i$.
Applying this pointwise over $y\in P=[-1,1]^d$ yields $\mathcal{C}\subseteq\Lambda(z_0)\iff P\subseteq Q$.

This completes a polynomial-time reduction from $H$-in-$V$ containment to checking pointwise sufficiency, proving
$\mathsf{coNP}$-hardness.

Finally, since our reduction uses $\Dset=\varnothing$, deciding whether the minimum pointwise SDD size equals $0$ is already
$\mathsf{coNP}$-hard; hence, computing the minimum size (and producing a minimum pointwise SDD) is $\mathsf{coNP}$-hard.

\subsection{Proof of Theorem~\ref{thm:conp-hard-global0}}
\label{app:conp-hard-global}
\begin{theorem}[Formal version of Theorem~\ref{thm:conp-hard-global0}]
The following decision problem is $\mathsf{coNP}$-hard: given a bounded polytope $X\subseteq\RR^n$ and a polyhedral uncertainty set $\mathcal{C}\subseteq\RR^n$ specified in $H$-representation,
decide whether the empty dataset $\Dset=\varnothing$ is a global SDD for $(X,\mathcal C)$ in the sense of Definition~\ref{def:bennouna-sdd}.
The hardness persists even when $\mathcal C$ is an open, full-dimensional polyhedron.
Consequently, computing the size of a minimum global SDD (and the corresponding search problem of finding one) is
$\mathsf{coNP}$-hard.
\end{theorem}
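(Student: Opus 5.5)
We reduce again from the restricted $H$-in-$V$ containment family: given a full-dimensional $Q=\conv\{v_1,\dots,v_M\}\subseteq\RR^d$ with $0\in\operatorname{int}(Q)$, decide whether $P=[-1,1]^d\subseteq Q$. We keep the standard-form polytope $X$ of \eqref{eq:pointwise-hard-X} and the vertex $z_0=(\1,\1,0)$, but replace the low-dimensional slice prior by an open, full-dimensional one.

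\textbf{Effective costs.} Define the linear map
\[
T(w\text{-part},r\text{-part},s\text{-part}) := (c_w - c_r) + \bar V^\top c_s\in\RR^{n_0},\qquad c=(c_w,c_r,c_s).
\]
On $X$ we have $r=2\1-w$ and $s=\bar Vw-\beta$, so
\[
c^\top z = T(c)^\top w + \text{const}(c).
\]
Hence $X^\star(c)$ depends on $c$ only through $T(c)$. Moreover, via \eqref{eq:Lambda-standard-form}, or directly through the identification $X\cong\widetilde X$ from the proof of \Cref{thm:coNP-hard-pointwise-check}, we have
\[
c\in\Lambda(z_0)\iff T(c)\in\cone\{\bar v_i\}.
\]
This extends \eqref{eq:Lambda-slice} from slices to all costs. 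I would verify this carefully: the KKT description gives $c=A^\top(\alpha,\mu)+\rho$ with $\rho$ supported on $s$-coordinates, and eliminating $\alpha,\mu$ should leave exactly the condition on $T(c)$.

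\textbf{The open prior.} Set
\[
B_{\mathrm{op}}:=\{(y,\tau):\ \tau\in(1/2,3/2),\ \|y\|_\infty<\tau\},
\]
which is an open polyhedron with polynomially many strict inequalities. Let $\C_{\mathrm{op}}:=T^{-1}(B_{\mathrm{op}})$. Since $T$ is surjective (its restriction to the $w$-block is the identity), $\C_{\mathrm{op}}$ is open, full-dimensional, and nonempty, and its $H$-representation is obtained by composing with $T$.

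\textbf{YES case ($P\subseteq Q$).} Let $(y,\tau)\in B_{\mathrm{op}}$. Then $y/\tau\in\operatorname{int}P\subseteq\operatorname{int}Q$, so $(y,\tau)$ lies in the interior of $\cone\{\bar v_i\}$. I then need $z_0$ to be the \emph{unique} optimizer for every such cost, so that the constant decoder $\widehat X\equiv\{z_0\}$ works with $\Dset=\varnothing$. Uniqueness follows from interiority: in the $\widetilde X$ coordinates, the objective $y^\top x+\tau t$ with $(y,\tau)$ a strictly positive combination of the $\bar v_i$ is strictly positive on every nonzero feasible direction from $(0,0)$. This holds because the feasible cone at the origin is $\{u:\bar v_i^\top u\ge0\ \forall i\}$, which is pointed since $0\in\operatorname{int}Q$, so some $\bar v_i^\top u>0$ for any nonzero feasible $u$. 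I expect this uniqueness argument to be the main obstacle; I would argue it via strict complementarity in the KKT form.

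\textbf{NO case.} Pick $y_0\in P\setminus Q$. Since $Q$ is closed, points $y$ with $\|y\|_\infty<1$ near $y_0$ also lie outside $Q$. Take the cost $c_1$ with $T(c_1)=(y,1)\in B_{\mathrm{op}}$; then $z_0\notin X^\star(c_1)$. Take $c_0$ with $T(c_0)=(0,1)$; then $X^\star(c_0)=\{z_0\}$ by Step~2 of the previous proof. Any decoder for $\Dset=\varnothing$ must output a single set, so the empty dataset is not an SDD.

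\textbf{Consequences.} The reduction is polynomial. Because $\C_{\mathrm{op}}$ is open and convex, \Cref{cor:dir-char} shows that deciding whether $d^\star=0$ is equivalent to the above question. Therefore computing $d^\star$, the minimum global SDD size, and a minimum global SDD are all $\mathsf{coNP}$-hard.
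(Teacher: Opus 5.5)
Your proposal is correct and follows the paper's proof essentially step for step: the same reduction from cube-in-$V$-polytope containment, the same effective-cost map $T$, the same open prior $\C_{\mathrm{op}}=T^{-1}(B_{\mathrm{op}})$, uniqueness of $z_0$ in the YES case from $T(c)$ lying in the interior of $\cone\{\bar v_i\}$, and two costs with different optimal sets in the NO case. The differences are cosmetic: you prove uniqueness via a strictly positive combination of the $\bar v_i$ together with pointedness of the feasible cone, whereas the paper uses directly that an interior point of the normal cone is strictly positive on every nonzero feasible direction; you also invoke \Cref{cor:dir-char} to add hardness of computing $d^\star$.
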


\begin{proof}
We reduce from the same restricted $H$-in-$V$ polytope containment problem used in the proof of
\Cref{thm:coNP-hard-pointwise-check}: given a full-dimensional polytope $Q=\conv\{v_1,\dots,v_M\}\subseteq\RR^d$ with
$0\in\operatorname{int}(Q)$, decide whether $P=[-1,1]^d\subseteq Q$, which is $\mathsf{coNP}$-complete. We reuse the standard-form polytope $X$ from \eqref{eq:pointwise-hard-X}.
In particular, $X=\{z=(w,r,s)\in\RR^{n}:Az=b,\ z\ge0\}$ with $n=2(d+1)+M$, and it contains the distinguished vertex
$z_0=(\1,\1,0)$.

\paragraph{A full-dimensional open uncertainty set.}
Write costs as $c=(c_w,c_r,c_s)\in\RR^{n_0}\times\RR^{n_0}\times\RR^{M}$ with $n_0=d+1$.
Define the \emph{effective cost on $w$} by the linear map
\begin{equation}
\label{eq:effective-cost-map}
T(c)\;:=\;c_w-c_r+\bar V^\top c_s\ \in\ \RR^{n_0}.
\end{equation}
Indeed, for any feasible $z=(w,r,s)\in X$ we have $r=2\cdot \1-w$ and $s=\bar Vw-\beta$, so
\begin{equation}
\label{eq:objective-reduction}
c^\top z
=
c_w^\top w + c_r^\top(2\cdot \1-w) + c_s^\top(\bar Vw-\beta)
=
T(c)^\top w\ +\ \underbrace{2c_r^\top\1 - c_s^\top\beta}_{\text{constant over }X}.
\end{equation}
Therefore $\argmin_{z\in X} c^\top z$ depends on $c$ only through $T(c)$.

Let $B_{\mathrm{op}}\subseteq\RR^{n_0}$ denote the open polyhedron
\[
B_{\mathrm{op}}
:=\Bigl\{(\tilde y,\tilde t)\in\RR^d\times\RR:\ \tfrac12<\tilde t<\tfrac32,\ \ -\tilde t<\tilde y_j<\tilde t\ \ \forall j\in[d]\Bigr\}.
\]
We define the uncertainty set as the preimage
\begin{equation}
\label{eq:global0-Cop}
\mathcal C_{\mathrm{op}}
:=\{\,c\in\RR^{n}:\ T(c)\in B_{\mathrm{op}}\,\}.
\end{equation}
Since $T$ is linear and $B_{\mathrm{op}}$ is an open polyhedron, $\mathcal C_{\mathrm{op}}$ is also an open polyhedron.
Moreover, it is full-dimensional because it is open and nonempty (e.g., $((0,1),0,0)\in\mathcal C_{\mathrm{op}}$).

We claim that
\begin{equation}
\label{eq:global0-equiv-containment}
\Dset=\varnothing\ \text{is a global SDD for }(X,\mathcal C_{\mathrm{op}})
\quad\Longleftrightarrow\quad
P\subseteq Q.
\end{equation}
Since $P\subseteq Q$ is $\mathsf{coNP}$-hard, this proves the theorem.

\paragraph{Step 1: Containment is equivalent to a cone inclusion.}
Recall that $\cone\{\bar v_1,\dots,\bar v_M\}=\{(t z,t): t\ge0,\ z\in Q\}$.
In particular, for any $\tilde t>0$ we have
\[
(\tilde y,\tilde t)\in \cone\{\bar v_1,\dots,\bar v_M\}
\quad\Longleftrightarrow\quad
\tilde y/\tilde t \in Q.
\]
Therefore,
\[
B_{\mathrm{op}}\subseteq \cone\{\bar v_1,\dots,\bar v_M\}
\quad\Longleftrightarrow\quad
(-1,1)^d\subseteq Q.
\]
Because $Q$ is closed, $(-1,1)^d\subseteq Q$ holds if and only if $[-1,1]^d\subseteq Q$, i.e., $P\subseteq Q$.

\paragraph{Step 2: If $P\subseteq Q$, then $\Dset=\varnothing$ is a global SDD.}
Assume $P\subseteq Q$.
By Step~1 we have $B_{\mathrm{op}}\subseteq \cone\{\bar v_1,\dots,\bar v_M\}$, and since $B_{\mathrm{op}}$ is open this implies
\[
B_{\mathrm{op}}\subseteq \operatorname{int}\bigl(\cone\{\bar v_1,\dots,\bar v_M\}\bigr).
\]

Next, note that $w$ uniquely determines $(r,s)$ via the equalities $w+r=2\cdot \1$ and $\bar Vw-s=\beta$.
Thus minimizing $c^\top z$ over $X$ is equivalent to minimizing $T(c)^\top w$ over the projected feasible set
\[
W:=\{w\in\RR^{n_0}:\ \exists (r,s)\ge0\ \text{s.t. } (w,r,s)\in X\}.
\]
Equivalently, with the change of variables $u:=w-\1=(x,t)$, this projected set corresponds to the bounded polytope
\[
\widetilde X
=\{(x,t)\in\RR^{d}\times\RR:\ v_i^\top x+t\ge 0\ \forall i\in[M],\ \ -1\le (x,t)\le 1\}.
\]

At $u_0:=(0,0)\in\widetilde X$, the box constraints are slack and the active constraints are $v_i^\top x+t\ge0$, i.e.,
$-\bar v_i^\top u\le 0$.
Hence $\Lambda(u_0)=\cone\{\bar v_1,\dots,\bar v_M\}$.
If $(\tilde y,\tilde t)\in\operatorname{int}(\Lambda(u_0))$, then $u_0$ is the \emph{unique} minimizer for this cost:
for any $u\in\widetilde X\setminus\{u_0\}$ the direction $u-u_0$ is a nonzero feasible direction at $u_0$,
so $(\tilde y,\tilde t)^\top(u-u_0)>0$ and hence $(\tilde y,\tilde t)^\top u>(\tilde y,\tilde t)^\top u_0$.

Now fix any $c\in\mathcal C_{\mathrm{op}}$.
Then $T(c)\in B_{\mathrm{op}}\subseteq\operatorname{int}(\Lambda(u_0))$, so the unique minimizer of $\min\{T(c)^\top u:\ u\in\widetilde X\}$
is $u_0$.
By \eqref{eq:objective-reduction}, the minimizer set of $\min_{z\in X} c^\top z$ is therefore the singleton $\{z_0\}$.
Thus $X^\star(c)=\{z_0\}$ for all $c\in\mathcal C_{\mathrm{op}}$, and the constant rule $\widehat X(\emptyset):=\{z_0\}$ makes
$\Dset=\varnothing$ a global SDD.

\paragraph{Step 3: If $P\not\subseteq Q$, then no zero-query global SDD exists.}
Now assume $P\not\subseteq Q$.
By Step~1 there exists $(\tilde y,\tilde t)\in B_{\mathrm{op}}$ such that $(\tilde y,\tilde t)\notin\cone\{\bar v_1,\dots,\bar v_M\}$.
Let $c_1:=((\tilde y,\tilde t),0,0)\in\mathcal C_{\mathrm{op}}$, so $T(c_1)=(\tilde y,\tilde t)$.
Since $(\tilde y,\tilde t)\notin \Lambda(u_0)$, we have $u_0\notin \widetilde X^\star((\tilde y,\tilde t))$, and thus $z_0\notin X^\star(c_1)$.

On the other hand, $c_0:=((0,1),0,0)\in\mathcal C_{\mathrm{op}}$ and by the argument in the proof of \Cref{thm:coNP-hard-pointwise-check} (Step 2 there), we have $X^\star(c_0)=\{z_0\}$.
Therefore $X^\star(c_1)\neq X^\star(c_0)$, i.e., the optimal solution set is not constant over $c\in\mathcal C_{\mathrm{op}}$.

Since $|\Dset|=0$, any candidate decoder $\widehat X:\RR^{|\Dset|}\to\mathcal{P}(X)$ is necessarily constant.
It cannot match two distinct optimal solution sets, so no such map can satisfy Definition~\ref{def:bennouna-sdd}.
This establishes \eqref{eq:global0-equiv-containment}.

\paragraph{Consequence for minimum-size global SDDs.}
Finally, if one could compute the size of a minimum global SDD (or output such a dataset) in polynomial time, then one could
decide whether the optimum size equals $0$, which is exactly the $\mathsf{coNP}$-hard decision problem in
\eqref{eq:global0-equiv-containment}.
\end{proof}

\section{Proofs and technical details for Section~\ref{sec:cuttingplanes}}
\label{app:sec5}

\subsection{Closed-form FI for ellipsoids}
\label{app:FI-ellipsoid}
Recall the face-intersection subproblem
\[
\mathrm{FI}(\delta;Q,s)\;:=\;\min\{\delta^\top c:\ c\in C,\ Q^\top c=s\}.
\]
\begin{proposition}\label{prop:FI-ellipsoid}
Let $C=\{c\in\RR^d:\ (c-\bar c)^\top \Sigma^{-1}(c-\bar c)\le R^2\}$ with $\Sigma\succ0$.
Fix $Q\in\RR^{d\times k}$ with $\rank(Q)=k$ and $s\in\RR^k$, and assume
$C(Q,s):=\{c\in C:\ Q^\top c=s\}\neq\emptyset$.
Define
\[
c^\perp:=\bar c+\Sigma Q(Q^\top\Sigma Q)^{-1}(s-Q^\top\bar c),\qquad
M^\perp:=\Sigma-\Sigma Q(Q^\top\Sigma Q)^{-1}Q^\top\Sigma\succeq0,
\]
and
\[
\rho:=\sqrt{R^2-(c^\perp-\bar c)^\top\Sigma^{-1}(c^\perp-\bar c)}.
\]
Then for any $\delta\in\RR^d$,
\[
\min_{c\in C(Q,s)}\delta^\top c
=
\delta^\top c^\perp-\rho\sqrt{\delta^\top M^\perp\delta}.
\]
If $\delta^\top M^\perp\delta>0$, a minimizer is
\[
c^{\mathrm{out}}(\delta)=c^\perp-\rho\,\frac{M^\perp\delta}{\sqrt{\delta^\top M^\perp\delta}}.
\]
If $\delta^\top M^\perp\delta=0$, then $\delta^\top c$ is constant over $C(Q,s)$.
\end{proposition}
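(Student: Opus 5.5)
The plan is to parametrize the affine slice $\{c:Q^\top c=s\}$ around the point $c^\perp$. This reduces the problem to minimizing a linear functional over a (possibly degenerate) ellipsoid centered at $c^\perp$, whose value is given by Cauchy--Schwarz in the $\Sigma^{-1}$ geometry.

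First I check that $c^\perp$ lies on the slice: $Q^\top c^\perp=Q^\top\bar c+(s-Q^\top\bar c)=s$. Every $c$ with $Q^\top c=s$ can be written as $c=c^\perp+u$ with $Q^\top u=0$.

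The key identity is the Pythagorean split
\[
(c-\bar c)^\top\Sigma^{-1}(c-\bar c)=(c^\perp-\bar c)^\top\Sigma^{-1}(c^\perp-\bar c)+u^\top\Sigma^{-1}u .
\]
To verify it, I note that $c^\perp-\bar c=\Sigma Q\lambda$ with $\lambda=(Q^\top\Sigma Q)^{-1}(s-Q^\top\bar c)$. The cross term is then $2u^\top\Sigma^{-1}\Sigma Q\lambda=2(Q^\top u)^\top\lambda=0$.

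Consequently the fiber is
\[
C(Q,s)=\{c^\perp+u:\ Q^\top u=0,\ u^\top\Sigma^{-1}u\le\rho^2\}.
\]
Nonemptiness of the fiber guarantees that $R^2-(c^\perp-\bar c)^\top\Sigma^{-1}(c^\perp-\bar c)\ge0$, so $\rho$ is well defined.

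Next I identify this set of admissible $u$ with $\{M^\perp y:\ y^\top M^\perp y\le\rho^2\}$. To do this I use $\Sigma^{1/2}$ and write $u=\Sigma^{1/2}v$. The constraints become $v\perp\Sigma^{1/2}Q$ and $\|v\|\le\rho$. Let $\Pi:=I-\Sigma^{1/2}Q(Q^\top\Sigma Q)^{-1}Q^\top\Sigma^{1/2}$ be the orthogonal projector onto $(\Sigma^{1/2}Q)^\perp$. Then $M^\perp=\Sigma^{1/2}\Pi\Sigma^{1/2}$.

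The problem becomes
\[
\min_{\|v\|\le\rho,\ v=\Pi v}\ \delta^\top c^\perp+(\Sigma^{1/2}\delta)^\top v .
\]
Since $v=\Pi v$, the linear term equals $(\Pi\Sigma^{1/2}\delta)^\top v$. By Cauchy--Schwarz it is at least $-\rho\|\Pi\Sigma^{1/2}\delta\|$. Using $\Pi^2=\Pi=\Pi^\top$, this norm is $\sqrt{\delta^\top\Sigma^{1/2}\Pi\Sigma^{1/2}\delta}=\sqrt{\delta^\top M^\perp\delta}$, which gives the claimed value.

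Equality holds at $v=-\rho\,\Pi\Sigma^{1/2}\delta/\|\Pi\Sigma^{1/2}\delta\|$ when the norm is positive. This $v$ is feasible because it lies in the range of $\Pi$ and has norm $\rho$. Mapping back gives $u=\Sigma^{1/2}v=-\rho M^\perp\delta/\sqrt{\delta^\top M^\perp\delta}$, which is the stated minimizer. The psd claim $M^\perp\succeq0$ is immediate from $M^\perp=\Sigma^{1/2}\Pi\Sigma^{1/2}$.

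If $\delta^\top M^\perp\delta=0$, then $\Pi\Sigma^{1/2}\delta=0$. Hence $(\Sigma^{1/2}\delta)^\top v=0$ for every admissible $v$, so $\delta^\top c=\delta^\top c^\perp$ is constant on the fiber, consistent with the formula.

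The main step requiring care is the reparametrization: showing that the constraint set for $u$ is exactly $\Sigma^{1/2}$ applied to a ball in the range of $\Pi$, so that the Cauchy--Schwarz bound is tight. The remaining steps are routine algebra with $\Pi$ being an orthogonal projector. The rank assumption on $Q$ is used only to make $(Q^\top\Sigma Q)^{-1}$ well defined.
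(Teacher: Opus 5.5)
Your proof is correct and takes essentially the same route as the paper. The paper whitens first via $z=\Sigma^{-1/2}(c-\bar c)$ and splits $z$ into the minimum-norm solution $z_0$ plus a kernel component; you do the same orthogonal split in the $\Sigma^{-1}$ geometry around $c^\perp$ before whitening. Both arguments then finish with the projector onto $\ker\bigl((\Sigma^{1/2}Q)^\top\bigr)$ and Cauchy--Schwarz over a ball of radius $\rho$.
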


\begin{proof}
Let $\Sigma^{1/2}$ be the symmetric square root of $\Sigma$ (so $\Sigma=\Sigma^{1/2}\Sigma^{1/2}$).
Change variables
\[
z := \Sigma^{-1/2}(c-\bar c)\quad\Longleftrightarrow\quad c=\bar c+\Sigma^{1/2}z.
\]
Then the ellipsoid constraint becomes $\|z\|_2\le R$.
The equality constraint becomes
\[
Q^\top c=s
\quad\Longleftrightarrow\quad
Q^\top(\bar c+\Sigma^{1/2}z)=s
\quad\Longleftrightarrow\quad
(\Sigma^{1/2}Q)^\top z = s-Q^\top \bar c.
\]
Define $\widetilde Q := \Sigma^{1/2}Q$ and $\widetilde s := s-Q^\top \bar c$, and also
$\widetilde\delta := \Sigma^{1/2}\delta$.
Up to the constant $\delta^\top \bar c$, the FI problem is equivalent to
\[
\min\{\widetilde\delta^\top z:\ \|z\|_2\le R,\ \widetilde Q^\top z=\widetilde s\}.
\]

Let
\[
z_0 := \widetilde Q(\widetilde Q^\top \widetilde Q)^{-1}\widetilde s,
\]
the unique minimum-$\ell_2$-norm solution of $\widetilde Q^\top z=\widetilde s$.
Every feasible $z$ can be written uniquely as $z=z_0+v$ with $v\in\ker(\widetilde Q^\top)$.
Since $z_0\in\mathrm{span}(\widetilde Q)$ and $\ker(\widetilde Q^\top)=\mathrm{span}(\widetilde Q)^\perp$,
we have the orthogonal decomposition $\|z\|_2^2=\|z_0\|_2^2+\|v\|_2^2$.
Thus feasibility is equivalent to $\|z_0\|_2\le R$ (which holds since the fiber is nonempty) and
\[
\|v\|_2 \le \rho_z := \sqrt{R^2-\|z_0\|_2^2}.
\]

Let $P:=I-\widetilde Q(\widetilde Q^\top\widetilde Q)^{-1}\widetilde Q^\top$ denote the orthogonal projector onto
$\ker(\widetilde Q^\top)$.
Then for $v\in\ker(\widetilde Q^\top)$, $\widetilde\delta^\top v=(P\widetilde\delta)^\top v$.
Therefore,
\[
\min_{\substack{v\in\ker(\widetilde Q^\top)\\ \|v\|_2\le \rho_z}}\ \widetilde\delta^\top v
=
\min_{\|v\|_2\le \rho_z}\ (P\widetilde\delta)^\top v
=
-\rho_z\,\|P\widetilde\delta\|_2,
\]
with minimizer $v^\star=-\rho_z\,\frac{P\widetilde\delta}{\|P\widetilde\delta\|_2}$ when $P\widetilde\delta\neq 0$
(and any $v$ when $P\widetilde\delta=0$).

Hence, the optimal value in $z$-space is
\[
\widetilde\delta^\top z_0 - \rho_z\,\|P\widetilde\delta\|_2,
\]
and the optimal $c$ is $c=\bar c+\Sigma^{1/2}(z_0+v^\star)$.

It remains to express everything back in the original variables.
First, note that $\widetilde Q^\top \widetilde Q = Q^\top \Sigma Q$ and
\[
\Sigma^{1/2}z_0
=
\Sigma^{1/2}\widetilde Q(\widetilde Q^\top\widetilde Q)^{-1}\widetilde s
=
\Sigma Q(Q^\top \Sigma Q)^{-1}(s-Q^\top\bar c).
\]
Thus
\[
c^\perp
=
\bar c+\Sigma^{1/2}z_0
=
\bar c+\Sigma Q(Q^\top \Sigma Q)^{-1}(s-Q^\top\bar c).
\]

Second,
\[
\|z_0\|_2^2
=
z_0^\top z_0
=
(c^\perp-\bar c)^\top \Sigma^{-1}(c^\perp-\bar c),
\]
so $\rho_z=\rho$ as defined in the statement.

Finally,
\[
\|P\widetilde\delta\|_2^2
=
\widetilde\delta^\top P\widetilde\delta
=
\delta^\top\Bigl(\Sigma-\Sigma Q(Q^\top \Sigma Q)^{-1}Q^\top \Sigma\Bigr)\delta
=
\delta^\top M^\perp \delta.
\]
Also,
\[
\Sigma^{1/2}P\widetilde\delta
=
\Bigl(\Sigma-\Sigma Q(Q^\top \Sigma Q)^{-1}Q^\top \Sigma\Bigr)\delta
=
M^\perp \delta.
\]
Substituting these identities yields the claimed closed-form expressions for the minimum value and the optimizer.
\end{proof}

\subsection{Proof of Property~\ref{prop:polytime}}
\label{app:polytime}
\begin{proof}
We bound the computational work in one iteration of Algorithm~\ref{alg:pointwise-cp}.
Each iteration performs the following optimization subroutines.
\begin{enumerate}[label=(\roman*),leftmargin=2.2em,itemsep=2pt,topsep=2pt]
\item \textbf{One LP over $\X$.} Line~5 solves
\[
\min\{(c^{\mathrm{in}})^\top x:\ Ax=b,\ x\ge 0\}.
\]
Since $\X$ is given in standard form, this LP can be solved in time polynomial in the
bit complexity of the input.

\item \textbf{$d-m$ face-intersection subproblems over $\C$.} For each $j\in N$ (so $|N|=d-m$), line~6 of ~\cref{alg:pointwise-cp} solves
\[
\min\{\delta_j^\top c':\ c'\in\C,\ Q_k^\top c'=s_k\}.
\]
If $\C$ is a polytope given in $H$-representation, $\C=\{c:\ Gc\le h\}$, then this is the LP
\[
\min\{\delta_j^\top c':\ Gc'\le h,\ Q_k^\top c'=s_k\},
\]
whose size is polynomial in the input (in particular, it has dimension $d$ and $k\le d^\star$ equality constraints).
Hence, each FI call can be solved in polynomial time.
If instead $\C$ is an ellipsoid, Proposition~\ref{prop:FI-ellipsoid} gives a closed-form expression for the optimal value and a minimizer,
which can be computed in polynomial time (it amounts to solving a $k\times k$ linear system and basic matrix--vector operations).
\end{enumerate}

Thus, each iteration runs in polynomial time and makes at most $d-m$ FI calls.
Finally, by \Cref{thm:alg1-correct}, Algorithm~\ref{alg:pointwise-cp} executes at most $d^\star+1\le d+1$ iterations and makes at most
$d^\star$ oracle queries of the form $q^\top c$.
Combining the per-iteration bound with the iteration bound yields the claimed overall polynomial running time.
\end{proof}

\section{Proofs and technical details for Section~\ref{sec:ddr-compression}}
\label{app:sec6}
\begin{proof}[Proof of Theorem~\ref{thm:lowerbound}]
\label{app:lowerbound}

We give an explicit construction in which the intrinsic dimension $d^\star$ and the ambient dimension can be chosen independently.

\paragraph{Feasible region.}
Fix integers $d\ge d^\star\ge 2$. Let $m:=d$, set $A:=[I_d\ I_d]\in\RR^{d\times 2d}$ and $b:=\mathbf{1}\in\RR^d$, and define
the lifted polytope
\[
\X:=\{z=(x,s)\in\RR^{2d}:Az=b,\ z\ge0\}
=\{(x,s):x+s=\mathbf{1},\ x\ge0,\ s\ge0\},
\]
which is an extended formulation of the hypercube $[0,1]^d$ obtained by introducing slack variables.
Then $\X$ is bounded and nondegenerate:
at every extreme point, for each $j\in[d]$ exactly one of $x_j,s_j$ equals $1$ and the other equals $0$,
so every extreme point has exactly $m=d$ strictly positive components.

\paragraph{Prior set and a rare-types distribution.}
Let $\mu\in\RR^d$ be defined coordinatewise by
\[
    \mu_j \;=\;
    \begin{cases}
        0.99, & j\in\{1,\dots,d^\star\},\\
        10, & j\in\{d^\star+1,\dots,d\},
    \end{cases}
\]
and define the lifted center $\bar\mu:=(\mu,0)\in\RR^{2d}$.
Let the convex prior set be the lifted radius-$1$ Euclidean ball
\[
    \C \;:=\; \{(c,0)\in\RR^{2d}:\ \|c-\mu\|_2 \le 1\}.
\]
For each $i\in\{1,\dots,d^\star\}$ define the lifted costs and query directions
\[
    c^{(i)} \;:=\; (\mu-e_i,0)\in\C,
    \qquad
    \delta_i \;:=\; (-e_i,e_i)\in\RR^{2d}.
\]
Fix $\varepsilon\in(0,1/4)$ and let $k:=d^\star-1$. Define a distribution $P_c$ supported on
$\{c^{(1)},\dots,c^{(d^\star)}\}\subseteq\C$ by
\[
    \PP(c=c^{(1)}) \;=\; 1-2\varepsilon,
    \qquad
    \PP(c=c^{(i)}) \;=\; \frac{2\varepsilon}{k},\quad i=2,\dots,d^\star.
\]
We call $c^{(1)}$ the \emph{common} type and $\{c^{(2)},\dots,c^{(d^\star)}\}$ the \emph{rare} types.

\paragraph{Step 1: Prove that $\dim\dir(\X^\star(\C))=d^\star$.}

\begin{lemma}
\label{lem:lb-cube-dim}
For every $(c,0)\in\C$, every minimizer $z^\star=(x^\star,s^\star)\in\argmin_{(x,s)\in\X} (c,0)^\top(x,s)$ satisfies
$x^\star_j=0$ and $s^\star_j=1$ for all $j>d^\star$.
Moreover, $\X^\star(\C)\supseteq \{(0,\mathbf{1}),\ (e_1,\mathbf{1}-e_1),\dots,(e_{d^\star},\mathbf{1}-e_{d^\star})\}$.
Consequently,
\[
    \dim\dir(\X^\star(\C)) \;=\; d^\star.
\]
\end{lemma}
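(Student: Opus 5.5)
The plan is to reduce the LP over $\X$ to a separable problem over the cube and then handle the three claims in order. For any feasible $(x,s)$ we have $s=\mathbf{1}-x$, so the lifted objective is $(c,0)^\top(x,s)=c^\top x$ with $x\in[0,1]^d$. This is minimized coordinatewise: a minimizer has $x_j=0$ whenever $c_j>0$ and $x_j=1$ whenever $c_j<0$, and $x_j$ is free in $[0,1]$ when $c_j=0$.

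\emph{First claim.} Take $(c,0)\in\C$, so $\|c-\mu\|_2\le 1$. For $j>d^\star$ this gives $c_j\ge \mu_j-1=9>0$. Hence every minimizer has $x^\star_j=0$, and therefore $s^\star_j=1$, for all $j>d^\star$.

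\emph{Second claim.} I would exhibit explicit witnesses in $\C$.
\begin{itemize}
\item The cost $(\mu,0)$ lies in $\C$. All its coordinates are positive, so its unique minimizer is $(0,\mathbf{1})$, which is an extreme point.
\item For $i\le d^\star$, consider $(\mu-e_i,0)=c^{(i)}\in\C$. Its $i$th coordinate is $0.99-1=-0.01<0$ and all other coordinates are positive. Its unique minimizer is therefore $(e_i,\mathbf{1}-e_i)$, again an extreme point.
\end{itemize}
Thus all $d^\star+1$ listed points belong to $\X^\star(\C)$.

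\emph{Dimension equality.} For the lower bound, the differences $(e_i,-e_i)$ for $i=1,\dots,d^\star$ lie in $\dir(\X^\star(\C))$ and are linearly independent, so the dimension is at least $d^\star$.

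For the upper bound, let $(x,s)\in\X^\star(\C)$. It satisfies $x+s=\mathbf{1}$ and, by the first claim, $x_j=0$ and $s_j=1$ for $j>d^\star$. Any difference of two such points therefore has the form $(u,-u)$ with $u$ supported on $\{1,\dots,d^\star\}$. This gives a subspace of dimension $d^\star$, so $\dim\dir(\X^\star(\C))\le d^\star$.

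There is no real obstacle here. The only care points are applying the first claim uniformly to every minimizer, including non-unique ones when some $c_j=0$ with $j\le d^\star$, and checking the sign of $0.99-1$ so that each witness minimizer is unique and extreme.
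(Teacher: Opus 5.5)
Your proposal is correct and follows essentially the same route as the paper's proof. Both reduce the lifted objective to $c^\top x$ over $[0,1]^d$ and use $c_j\ge \mu_j-1=9>0$ for $j>d^\star$. Both exhibit the witnesses $(\mu,0)$ and $(\mu-e_i,0)$ with their unique minimizers, and sandwich $\dim\dir(\X^\star(\C))$ at $d^\star$ via the independent differences $(e_i,-e_i)$ and the support restriction from the first claim.
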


\begin{proof}
Fix $(c,0)\in\C$ and any feasible $(x,s)\in\X$. Since $x+s=\mathbf{1}$, we have
\[
(c,0)^\top(x,s)=c^\top x.
\]
Thus, the LP objective depends only on $x\in[0,1]^d$.
For each $j>d^\star$ we have $c_j\ge \mu_j-1=9>0$, hence any minimizer must set $x^\star_j=0$
(and therefore $s^\star_j=1$) for all $j>d^\star$.

Next, $\bar\mu=(\mu,0)\in\C$ and $\mu$ has strictly positive coordinates, so the unique minimizer is
$(x,s)=(0,\mathbf{1})$.
For each $i\le d^\star$, the cost $c^{(i)}=(\mu-e_i,0)$ has $c^{(i)}_i=-0.01<0$ and all other $c^{(i)}_j>0$,
so the unique minimizer is $(x,s)=(e_i,\mathbf{1}-e_i)$.
This proves the stated inclusion of $\X^\star(\C)$.

Therefore $\dir(\X^\star(\C))$ contains $\spanop\{(e_i,-e_i): i=1,\dots,d^\star\}$, so
$\dim\dir(\X^\star(\C))\ge d^\star$.
On the other hand, we already showed that every optimizer has $x_j=0$ for $j>d^\star$, hence every difference of reachable optima lies in
$\spanop\{(e_i,-e_i): i=1,\dots,d^\star\}$, giving $\dim\dir(\X^\star(\C))\le d^\star$.
\end{proof}

\paragraph{Step 2: Without querying $\delta_i$, type $i$ cannot be certified.}
\begin{lemma}
\label{lem:lb-need-delta}
Fix any $i\in\{2,\dots,d^\star\}$ and let $\Dset\subseteq\RR^{2d}$ be any dataset that does not contain $\delta_i$.
If $\Dset\subseteq\{\delta_1,\dots,\delta_{d^\star}\}$, then
\[
    \bar\mu\in \C\bigl(\Dset, s(c^{(i)};\Dset)\bigr).
\]
Consequently, $\Dset$ is \emph{not} pointwise sufficient at $c^{(i)}$.
\end{lemma}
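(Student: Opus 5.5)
The plan has two parts. First I show that $\bar\mu$ lies in the fiber by checking the measurement equalities. Then I show that the fiber contains two costs with disjoint optimal sets, so no single decision can be optimal for all of it.

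\textbf{Membership of $\bar\mu$ in the fiber.} First, $\bar\mu=(\mu,0)\in\C$, since $\|\mu-\mu\|_2=0\le 1$. Next I check that $\bar\mu$ produces the same measurements as $c^{(i)}$ for every $q\in\Dset$. By assumption each such $q$ equals $\delta_l=(-e_l,e_l)$ for some $l\ne i$. Taking inner products with lifted costs $(c,0)$ gives $\delta_l^\top(c,0)=-c_l$. Hence $\delta_l^\top c^{(i)}=-(\mu-e_i)_l=-\mu_l$ because $l\neq i$, and $\delta_l^\top\bar\mu=-\mu_l$ as well. So $s(\bar\mu;\Dset)=s(c^{(i)};\Dset)$, and therefore $\bar\mu\in\C(\Dset,s(c^{(i)};\Dset))$. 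The fiber also contains $c^{(i)}$ itself, since $c^{(i)}\in\C$.

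\textbf{Failure of pointwise sufficiency.} Here I reuse the computation in the proof of Lemma~\ref{lem:lb-cube-dim}.
\begin{itemize}
\item Since every coordinate of $\mu$ is strictly positive, the cost $\bar\mu$ has the unique minimizer $(0,\mathbf 1)$.
\item The cost $c^{(i)}$ has $c^{(i)}_i=-0.01<0$ and all other coordinates strictly positive, so its unique minimizer is $(e_i,\mathbf 1-e_i)$.
\end{itemize}
Because these two unique minimizers differ, no decision $x^\star\in\X$ is optimal for both $\bar\mu$ and $c^{(i)}$. Both costs lie in the fiber, so Definition~\ref{def:pointwise-sdd} fails and $\Dset$ is not pointwise sufficient at $c^{(i)}$.

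\textbf{Main obstacle.} The only step needing care is the inner-product calculation. I need to confirm that the $s$-block of the lifted costs is zero, so the $e_l$ part of $\delta_l$ contributes nothing. I also need $l\neq i$, which is exactly what makes $e_i$ invisible to the measurements and hence lets $\bar\mu$ and $c^{(i)}$ agree on all of them. Everything else follows directly from the uniqueness of minimizers already established.
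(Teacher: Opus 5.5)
Your proposal is correct and follows the paper's proof essentially line for line. It checks that every $\delta_l$ with $l\neq i$ gives $-\mu_l$ on both $\bar\mu$ and $c^{(i)}$, then invokes the distinct unique minimizers from Lemma~\ref{lem:lb-cube-dim}. You also state $\bar\mu\in\C$ and $c^{(i)}$ in the fiber explicitly, which the paper leaves implicit.
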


\begin{proof}
Assume $\Dset\subseteq\{\delta_1,\dots,\delta_{d^\star}\}$ and $\delta_i\notin\Dset$.
Every query in $\Dset$ is of the form $\delta_j=(-e_j,e_j)$ with $j\neq i$.
For such $j$,
\[
\delta_j^\top \bar\mu
=(-e_j,e_j)^\top(\mu,0)=-\mu_j
=-(\mu-e_i)_j
=(-e_j,e_j)^\top(\mu-e_i,0)
=\delta_j^\top c^{(i)}.
\]
Hence $\bar\mu$ is consistent with the same measurements as $c^{(i)}$, i.e.,
$\bar\mu\in \C(\Dset,s(c^{(i)};\Dset))$.

But by Lemma~\ref{lem:lb-cube-dim}, $x^\star(\bar\mu)=0$ (so the unique optimizer is $(0,\mathbf{1})$)
while $x^\star(c^{(i)})=e_i$ (unique optimizer $(e_i,\mathbf{1}-e_i)$).
Thus, the fiber contains two costs with different unique minimizers, so no single decision can be optimal for all costs in the fiber.
Therefore $\Dset$ is not pointwise sufficient at $c^{(i)}$.
\end{proof}

\paragraph{Step 2': On type $i$, the pointwise routine adds only $\delta_i$.}
\begin{lemma}\label{lem:typei-only-deltai}
Fix $i\in\{1,\dots,d^\star\}$. Run Algorithm~\ref{alg:pointwise-cp} on $c=c^{(i)}=(\mu-e_i,0)$ with
initialization $D_{\mathrm{init}}\subseteq\{\delta_1,\dots,\delta_{d^\star}\}$ such that
$\delta_i\notin D_{\mathrm{init}}$.
Then the call performs exactly one augmentation and returns
$D=D_{\mathrm{init}}\cup\{\delta_i\}$.
In particular, during this call, the algorithm cannot add any $\delta_j$ with $j\neq i$.
\end{lemma}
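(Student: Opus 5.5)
The plan is to compute Algorithm~\ref{alg:pointwise-cp} explicitly on this instance. Two facts make this possible. First, the LP solved at line~5 always has a unique optimal basis at $c^{(i)}$. Second, every face-intersection problem is a linear minimization over a Euclidean ball intersected with coordinate-fixing hyperplanes, so it can be solved by hand, or via Proposition~\ref{prop:FI-ellipsoid}.

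\textbf{Optimal basis and cone.} By Lemma~\ref{lem:lb-cube-dim}, the unique minimizer at $c^{(i)}$ is $(e_i,\mathbf 1-e_i)$. Its basis is $B=\{x_i\}\cup\{s_j:j\neq i\}$ and its nonbasis is $N=\{s_i\}\cup\{x_j:j\neq i\}$. The edge directions are:
\begin{itemize}
\item raising $s_i$ gives $\delta(B,s_i)=(-e_i,e_i)=\delta_i$;
\item raising $x_j$ for $j\neq i$ gives $\delta(B,x_j)=(e_j,-e_j)=-\delta_j$.
\end{itemize}
For costs $(c',0)$, the cone $\Lambda(B)$ is therefore $\{c'_i\le 0,\ c'_j\ge 0\ \forall j\neq i\}$. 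A measurement $\delta_j$ with $j\neq i$ records $-c'_j=-\mu_j$. Hence the initial fiber is
\[
\C_k=\{(c',0):\|c'-\mu\|_2\le 1,\ c'_j=\mu_j\ \forall j \text{ with } \delta_j\in D_{\mathrm{init}}\}.
\]
This is a ball of radius $1$ centred at $\bar\mu$, restricted to an affine subspace through its centre.

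\textbf{First iteration.} The FI values are as follows.
\begin{itemize}
\item For $s_i$: $\min(-c'_i)=-(0.99+1)=-1.99$, attained uniquely at $c'=\mu+e_i$, since coordinate $i$ is free.
\item For $x_j$ with $\delta_j\in D_{\mathrm{init}}$: the value is $\mu_j>0$.
\item For $x_j$ with $j>d^\star$: the value is at least $9$.
\item For the remaining free $j\le d^\star$, $j\ne i$: the value is $0.99-1=-0.01$.
\end{itemize}
So $j_0=s_i$ is the unique minimizer, $m_{\min}<0$, and $c^{\mathrm{out}}=(\mu+e_i,0)$. At $c^{\mathrm{out}}$, every $j\neq i$ has $c^{\mathrm{out}}_j=\mu_j>0$. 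Hence the only violated facet is $s_i$, and the facet-hit rule is forced to pick $j^\star=s_i$ and query $q=\delta_i$. No tie-breaking is involved.

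\textbf{Second iteration.} The new measurement fixes $c'_i=(\mu-e_i)_i=\mu_i-1$. Then $\|c'-\mu\|_2^2\ge 1$ from coordinate $i$ alone. Combined with the ball constraint, this forces $c'_j=\mu_j$ for all $j\ne i$, so the fiber collapses to the single point $\{c^{(i)}\}$. The anchor and basis are unchanged. All FI values are now nonnegative:
\begin{itemize}
\item $-c'_i=0.01$;
\item $c'_j=\mu_j>0$ for $j\neq i$.
\end{itemize}
So the algorithm returns $D_{\mathrm{init}}\cup\{\delta_i\}$ after exactly one augmentation.

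The main obstacle is the first iteration. One must verify that the argmin defining $c^{\mathrm{out}}$ is unique and that it violates only the $s_i$ facet. Otherwise the facet-hit rule could legitimately select some $-\delta_j$. The radius-$1$ ball with centre $0.99$ is exactly what makes the $s_i$ violation ($-1.99$) strictly dominate the others ($-0.01$), while keeping all other coordinates at their centre values.
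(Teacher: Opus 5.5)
Your proof is correct, but it establishes the key step, namely which facet is chosen in the first iteration, differently from the paper.

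The paper never computes the face-intersection values or the witness. Instead it proves a structural fact: every $(u,v)\in\C\cap\Lambda(B(i))$ has $u_j>0$ for all $j\neq i$. The reason is that $u_i\le 0$ forces $(u_i-\mu_i)^2\ge 0.99^2$, hence $|u_j-\mu_j|\le\sqrt{1-0.99^2}<0.15$. So no point of $\C_k\cap\Lambda(B(i))$ lies on an $x_j$-facet, and the first-hit point $c_{\alpha^\star}$ must lie on the $\delta_i$-facet, whatever witness $c^{\mathrm{out}}$ was used.

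You instead solve every FI subproblem explicitly. You show that $j_0=s_i$ is the unique minimizer ($-1.99$ against $-0.01$, $\mu_j$, and $9$), that $c^{\mathrm{out}}=(\mu+e_i,0)$ is the unique argmin, and that this point violates only the $s_i$-facet. The facet-hit rule then has a single candidate, and no first-hit reasoning is needed.

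Your route is more concrete and makes it transparent that no tie-breaking enters. However, it depends on the FI subproblems being solved exactly and on the uniqueness of both $j_0$ and the argmin. The paper's route is witness-agnostic. It would still work if the FI solver returned any violating point, as the paper's remark on witnesses of failure allows, or if $j_0$ were selected differently. It also mirrors the general mechanism behind Lemma~\ref{lem:query-relevant}: only the $\delta_i$-face of $\Lambda(B(i))$ is reachable from $\C$.

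Consequently, your closing worry, that without uniqueness the rule ``could legitimately select some $-\delta_j$'', does not actually arise in this instance. The paper's claim rules it out for every witness.

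The termination step, where the fiber collapses to $\{c^{(i)}\}$ after $\delta_i$ is added, is the same in both arguments.
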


\begin{proof}
Let $Q_k$ be the matrix whose columns are the directions in $D_{\mathrm{init}}$ and set $s_k:=Q_k^\top c^{(i)}$.
Since each $\delta_j=(-e_j,e_j)$, the fiber
\[
    \C_k := \{c'\in\C:\ Q_k^\top c'=s_k\}
\]
fixes the coordinates $c'_j=(\mu-e_i)_j=\mu_j$ for every $\delta_j\in D_{\mathrm{init}}$ and leaves coordinate $i$ unconstrained;
all $c'\in\C_k$ have the form $(\tilde c,0)$ with $\tilde c\in\RR^d$.

\paragraph{(a) The first LP solve yields the vertex $(e_i,\mathbf{1}-e_i)$ and its cone.}
With $c_{\mathrm{in}}=c^{(i)}=(\mu-e_i,0)$, we have $(\mu-e_i)_i<0$ and $(\mu-e_i)_j>0$ for all $j\neq i$,
so the LP over $\X$ has the unique minimizer
\[
z^\star=(x^\star,s^\star)=(e_i,\mathbf{1}-e_i).
\]
For the matrix $A=[I_d\ I_d]$, this vertex corresponds to the unique feasible basis
\[
B(i):=\{x_i\}\cup\{s_j:\ j\neq i\},
\qquad
N(i):=\{s_i\}\cup\{x_j:\ j\neq i\}.
\]
For $j\neq i$, increasing the nonbasic variable $x_j$ from $0$ decreases $s_j$ from $1$ to $0$, hence
\[
\delta(B(i),x_j)=(e_j,-e_j).
\]
Increasing the nonbasic variable $s_i$ from $0$ decreases $x_i$ from $1$ to $0$, hence
\[
\delta(B(i),s_i)=(-e_i,e_i)=\delta_i.
\]
Therefore, the optimality cone \eqref{eq:cone-delta} takes the explicit form
\[
\Lambda(B(i))
=\Bigl\{(u,v)\in\RR^{2d}:\ (u_j-v_j)\ge 0\ \forall j\neq i,\ \ (-u_i+v_i)\ge 0\Bigr\}.
\]
Since every $c'\in\C_k$ has $v=0$, we have
\[
\C_k\subseteq \{(u,0):u\in\RR^d\},
\qquad
\Lambda(B(i))\cap \{(u,0)\}
=\{(u,0):\ u_j\ge 0\ \forall j\neq i,\ u_i\le 0\}.
\]

\paragraph{(b) Among facets of $\Lambda(B(i))$, only the facet for $\delta_i$ can be hit from within $\C_k$.}
Because coordinate $i$ is free in $\C_k$, the point $\tilde c:=(\mu+e_i,0)$ belongs to $\C_k$
(and to $\C$) but has $\tilde c_i>0$, hence $\tilde c\notin\Lambda(B(i))\cap\{(u,0)\}$.
Therefore $\C_k\not\subseteq \Lambda(B(i))$, the containment test fails, and Algorithm~\ref{alg:pointwise-cp} enters the \textsc{Else} branch,
producing some witness $c_{\mathrm{out}}\in\C_k\setminus\Lambda(B(i))$ and considering the segment
$c_\alpha:=(1-\alpha)c_{\mathrm{in}}+\alpha c_{\mathrm{out}}$.

We claim that for every $j\neq i$,
\[
\bigl(\C\cap \Lambda(B(i))\bigr)\ \cap\ \{(u,v):\ (u_j-v_j)=0\} \;=\; \emptyset.
\]
Indeed, take any $c=(u,v)\in\C\cap\Lambda(B(i))$. Since $c\in\C$ we have $v=0$ and $\|u-\mu\|_2\le 1$.
Moreover, $c\in\Lambda(B(i))$ implies $u_i\le 0$. Since $\mu_i=0.99>0$,
\[
(u_i-\mu_i)^2\ge (0-\mu_i)^2=\mu_i^2.
\]
Thus,
\[
\sum_{t\neq i}(u_t-\mu_t)^2 \le 1-(u_i-\mu_i)^2 \le 1-\mu_i^2,
\]
and hence for every $j\neq i$,
\[
|u_j-\mu_j|\le \sqrt{1-\mu_i^2}
\quad\Rightarrow\quad
u_j\ge \mu_j-\sqrt{1-\mu_i^2}.
\]
With $\mu_j\in\{0.99,10\}$ and $\sqrt{1-\mu_i^2}=\sqrt{1-0.99^2}<0.15$, we get $u_j>0$ for all $j\neq i$.
Therefore $u_j-v_j=u_j>0$ for all $j\neq i$, proving the claim.

Now let $\alpha^\star$ be the first parameter where the segment leaves $\relint(\Lambda(B(i)))$.
Then $c_{\alpha^\star}\in \C_k\cap \Lambda(B(i))$ lies on a facet hyperplane of $\Lambda(B(i))$.
By the claim above, it cannot lie on any facet $(u_j-v_j)=0$ with $j\neq i$; hence, it must lie on the facet
$(-u_i+v_i)=0$, whose normal is exactly $\delta(B(i),s_i)=\delta_i$.
Therefore, the facet-hit rule appends $q_{k+1}=\delta_i$.

\paragraph{(c) After adding $\delta_i$, the fiber becomes a singleton and the routine terminates.}
Appending $\delta_i=(-e_i,e_i)$ adds the constraint
$\delta_i^\top(c',0)=\delta_i^\top(\mu-e_i,0)$, i.e., $-u_i=(1-\mu_i)$ and hence $u_i=\mu_i-1$.
Then $(u_i-\mu_i)^2=1$, and the radius-$1$ constraint $\|u-\mu\|_2^2\le 1$ forces
$\sum_{t\neq i}(u_t-\mu_t)^2=0$, hence $u=\mu-e_i$ and $c'=c^{(i)}$.
Therefore the updated fiber is the singleton $\{c^{(i)}\}$, the containment test succeeds, and
Algorithm~\ref{alg:pointwise-cp} terminates after exactly one augmentation, returning $D=D_{\mathrm{init}}\cup\{\delta_i\}$.
\end{proof}

\paragraph{Step 3: A coupon-collector lower bound for Algorithm~\ref{alg:cumulative}.}
Let $I\subseteq\{2,\dots,d^\star\}$ be the set of rare indices that appear at least once among the $n$ i.i.d.\ samples.
By Lemma~\ref{lem:typei-only-deltai}, Algorithm~\ref{alg:cumulative} learns $\delta_i$ if and only if $i\in I$.
By Lemma~\ref{lem:lb-need-delta}, the final dataset $\Dset_n$ fails on every rare type $i\notin I$.
Therefore
\[
    R(\Dset_n)
    \;\ge\;
    \sum_{i\in\{2,\dots,d^\star\}\setminus I} \PP(c=c^{(i)})
    \;=\;
    \frac{2\varepsilon}{k}\,\bigl|\{2,\dots,d^\star\}\setminus I\bigr|.
\]

Let $N$ be the number of rare samples among $c_1,\dots,c_n$:
\[
    N \;=\; \sum_{t=1}^n \1\{c_t\neq c^{(1)}\}\ \sim\ \mathrm{Binomial}(n,2\varepsilon).
\]
Since $|I|\le N$, the event $N<k/2$ implies $|\{2,\dots,d^\star\}\setminus I|\ge k/2$, and hence $R(\Dset_n)\ge \varepsilon$.
It remains to lower bound $\PP(N<k/2)$.
If $n\le k/(8\varepsilon)$, then $\EE[N]=2\varepsilon n\le k/4$, and Markov's inequality gives
\[
    \PP\!\Bigl(N\ge \frac{k}{2}\Bigr)
    \;\le\;
    \frac{\EE[N]}{k/2}
    \;\le\;
    \frac{k/4}{k/2}
    \;=\;
    \frac{1}{2}.
\]
Hence $\PP(N<k/2)\ge 1/2$, and on this event we have $R(\Dset_n)\ge \varepsilon$.
This concludes the proof of \Cref{thm:lowerbound}.
\end{proof}

\section{Proofs and technical details for Section~\ref{sec:clo_compression}}
\label{app:clo_details}

\subsection{Ellipsoidal lifting}
\label{app:lift}

Throughout this appendix, we assume the shifted ellipsoidal prior
\[
\C:=\{c\in\RR^d:\ (c-c_0)^\top \Sigma^{-1}(c-c_0)\le 1\},
\qquad \Sigma\succ 0,\ c_0\in\RR^d.
\]
For any orthonormal basis $U\in\RR^{d\times t}$ we define the lifting matrix
\[
\mathcal{L}_U:=\Sigma U(U^\top \Sigma U)^{-1},
\]
and the associated canonical lifting map $\operatorname{lift}_U:\RR^t\to\RR^d$ by
\[
\operatorname{lift}_U(s):=c_0+\mathcal{L}_U s.
\]

\begin{lemma}
\label{lem:lifting_main}
For any $s\in\RR^t$, $\operatorname{lift}_U(s)$ is the unique solution to
\[
\min_{c\in\RR^d}\ \frac12(c-c_0)^\top \Sigma^{-1}(c-c_0)
\quad \text{s.t.}\quad U^\top(c-c_0)=s.
\]
In particular, it satisfies $U^\top(\operatorname{lift}_U(s)-c_0)=s$ and
\[
(\operatorname{lift}_U(s)-c_0)^\top \Sigma^{-1}(\operatorname{lift}_U(s)-c_0)
=s^\top(U^\top\Sigma U)^{-1}s.
\]
Consequently, $\operatorname{lift}_U(s)\in\C$ whenever $s\in U^\top(\C-c_0)$.
When $\Sigma=I$, we have $\operatorname{lift}_U(s)=c_0+Us$.
\end{lemma}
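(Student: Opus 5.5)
The plan is to recognize the problem as a strictly convex quadratic program with affine equality constraints and read off its solution from the KKT conditions. Substitute $y:=c-c_0$, so the problem becomes minimizing $\tfrac12 y^\top\Sigma^{-1}y$ subject to $U^\top y=s$. Since $\Sigma\succ0$, the objective is strictly convex. The constraint set is a nonempty affine subspace: it contains $Us$, because $U^\top U=I_t$ by orthonormality. Hence a minimizer exists and is unique.

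Linear constraints require no constraint qualification, so the KKT conditions are necessary and sufficient. They read
\[
\Sigma^{-1}y = U\lambda,\qquad U^\top y=s .
\]
The first gives $y=\Sigma U\lambda$. Substituting into the second gives $U^\top\Sigma U\lambda=s$. The matrix $U^\top\Sigma U$ is positive definite because $U$ has full column rank and $\Sigma\succ0$, so $\lambda=(U^\top\Sigma U)^{-1}s$. Therefore
\[
y=\Sigma U(U^\top\Sigma U)^{-1}s=\mathcal L_U s,
\]
which is exactly $\operatorname{lift}_U(s)-c_0$.

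The identity $U^\top(\operatorname{lift}_U(s)-c_0)=s$ follows directly from
\[
U^\top\Sigma U(U^\top\Sigma U)^{-1}s=s .
\]
For the quadratic form, write $K:=(U^\top\Sigma U)^{-1}$. Then $y^\top\Sigma^{-1}y=s^\top K U^\top\Sigma\,\Sigma^{-1}\Sigma U K s$, and this reduces to $s^\top K s$ after cancelling $U^\top\Sigma U$ against one factor of $K$.

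For the consequence, take $s\in U^\top(\C-c_0)$. Then $s=U^\top(c-c_0)$ for some $c\in\C$, so $c$ is feasible for the quadratic program. The minimal objective value is therefore at most $\tfrac12(c-c_0)^\top\Sigma^{-1}(c-c_0)\le\tfrac12$. This shows $(\operatorname{lift}_U(s)-c_0)^\top\Sigma^{-1}(\operatorname{lift}_U(s)-c_0)\le1$, i.e., $\operatorname{lift}_U(s)\in\C$.

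When $\Sigma=I$, we have $U^\top U=I_t$, so $\mathcal L_U=U$ and $\operatorname{lift}_U(s)=c_0+Us$.

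None of these steps is a real obstacle. The only point that needs care is the invertibility of $U^\top\Sigma U$, which is what makes the lifting well defined and the multiplier unique.
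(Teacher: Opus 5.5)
Your proposal is correct and follows essentially the same route as the paper: substitute $z=c-c_0$, use Lagrangian stationarity to get $z=\Sigma U\lambda$, and solve the constraint to obtain $\lambda=(U^\top\Sigma U)^{-1}s$, which gives $z=\mathcal{L}_U s$. You also fill in details the paper leaves implicit: uniqueness via strict convexity, invertibility of $U^\top\Sigma U$, and comparing the optimal value with the objective at a feasible $c\in\C$ to derive the ``consequently'' clause.
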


\begin{proof}
Let $z:=c-c_0$. The problem becomes
\[
\min_{z\in\RR^d}\ \frac12 z^\top \Sigma^{-1}z
\quad\text{s.t.}\quad U^\top z=s,
\]
which is exactly the centered-ellipsoid case after the change of variables.
The Lagrangian is $\mathcal{L}(z,\lambda)=\frac12 z^\top \Sigma^{-1}z-\lambda^\top(U^\top z-s)$.
Stationarity gives $\Sigma^{-1}z-U\lambda=0$, hence $z=\Sigma U\lambda$.
Imposing the constraint yields $s=U^\top z=U^\top\Sigma U\lambda$, so $\lambda=(U^\top\Sigma U)^{-1}s$ and
$z=\Sigma U(U^\top\Sigma U)^{-1}s=\mathcal{L}_U s$.
The claimed identities follow by direct substitution.
\end{proof}

\subsection{SPO generalization in the decision-sufficient subspace (Proof of Theorem~\ref{thm:misspec_plus_gen})}
\label{app:spo_gen_dec_suff_subspace}
We now prove the Stage~II generalization bound in Theorem~\ref{thm:misspec_plus_gen}.
The proof has three ingredients:
(i) we bound the complexity of the induced decision class $x^\star\circ\mathcal H_{U_\star,\dstar}$ via its Natarajan dimension,
(ii) we plug this bound into the Natarajan-dimension generalization theorem for the SPO loss due to \citet{elbalghiti2023generalization}, and
(iii) we show that under a global sufficient dataset, projecting to $W_\star$ and lifting back is decision-preserving, so restricting to the compressed class incurs no approximation error.

\paragraph{Step 1: Natarajan dimension of the compressed decision class.}
We begin by translating compressed affine predictors into a multiclass linear prediction problem over labels $\X^\angle$.
This is the only place where the intrinsic dimension $\dstar$ enters the analysis.

\begin{lemma}[Natarajan dimension bound]
\label{lem:natarajan_tp}
Let $\X\subseteq\RR^d$ be a bounded polytope with extreme points $\X^\angle$.
Let $\mathcal{H}^{\mathrm{aff}}_{U_\star,\dstar}$ be the class of compressed affine predictors
\[
\mathcal{H}^{\mathrm{aff}}_{U_\star,\dstar}
:=\Bigl\{
f_{B,b}(\xi):=b c_0+\mathcal{L}_{U_\star}B\xi\ :\ B\in\RR^{\dstar\times p}, \ b \in \RR
\Bigr\},
\]
and let $\mathcal{F}^{\mathrm{aff}}_{U_\star,\dstar}:=x^\star\circ \mathcal{H}^{\mathrm{aff}}_{U_\star,\dstar}$ be the induced decision class.
Then the Natarajan dimension satisfies $d_N(\mathcal{F}^{\mathrm{aff}}_{U_\star,\dstar})\le \dstar p+1$.
\end{lemma}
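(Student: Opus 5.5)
The plan is to recognize $\mathcal{F}^{\mathrm{aff}}_{U_\star,\dstar}$ as a subclass of a standard multiclass linear class of the type covered by \citet{elbalghiti2023generalization}. Once that embedding is in place, the Natarajan bound will come from the known result that multiclass linear classifiers with feature dimension $D$ have Natarajan dimension at most $D$.

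First, rewrite the decision $x^\star(f_{B,b}(\xi))$ as an argmax over labels. For a label $x\in\X^\angle$, the score is
\[
-f_{B,b}(\xi)^\top x \;=\; -b\,c_0^\top x \;-\; \xi^\top B^\top \mathcal{L}_{U_\star}^\top x .
\]
Define the label embedding $\psi(x):=(\mathcal{L}_{U_\star}^\top x,\ c_0^\top x)\in\RR^{\dstar}\times\RR$. Then the score is a bilinear form $\langle \Theta,\ \phi(\xi,x)\rangle$ with parameter $\Theta:=(B,b)$, which lives in a space of dimension $\dstar p+1$. The joint feature map is
\[
\phi(\xi,x):=-\bigl(\psi_{1:\dstar}(x)\,\xi^\top,\ \psi_{\dstar+1}(x)\bigr).
\]
Since $x^\star$ uses a fixed lexicographic tie-breaking rule, $x^\star\circ f_{B,b}$ is exactly the multiclass linear predictor $\xi\mapsto\argmax_{x\in\X^\angle}\langle\Theta,\phi(\xi,x)\rangle$ under that same tie-break.

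Second, prove the Natarajan bound for such classes directly, via the standard argument (Daniely--Shalev-Shwartz style). Suppose points $\xi_1,\dots,\xi_N$ are N-shattered, with witness labelings $g_1,g_2$ satisfying $g_1(\xi_i)\ne g_2(\xi_i)$. For each $i$ set
\[
z_i:=\phi(\xi_i,g_1(\xi_i))-\phi(\xi_i,g_2(\xi_i))\in\RR^{\dstar p+1}.
\]
Each subset $S\subseteq[N]$ is realized by some $\Theta$ that prefers $g_1$ on $S$ and $g_2$ off $S$. This gives $\langle\Theta,z_i\rangle\ge 0$ on $S$ and $\le 0$ off $S$, with strictness up to tie-breaking. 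Hence the $z_i$ are "sign-shattered" by homogeneous halfspaces in dimension $\dstar p+1$, so $N\le \dstar p+1$ by the VC dimension of homogeneous linear classifiers.

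The main obstacle is the tie-breaking. Lexicographic ties make the inequalities non-strict, and non-strict inequalities do not immediately give VC shattering. I would handle this by first perturbing $\Theta$ slightly into the interior of the realizing region; this is possible when the region has nonempty interior. Alternatively, if the strict case cannot be forced, I would instead cite the Natarajan bound of \citet{elbalghiti2023generalization} for linear hypothesis classes with fixed tie-breaking, applied with feature dimension $\dstar p+1$. The rest is bookkeeping: check that $\mathcal{H}_{U_\star,\dstar}\subseteq\mathcal{H}^{\mathrm{aff}}_{U_\star,\dstar}$ (take $b=1$), so the bound transfers to the class used in \Cref{thm:misspec_plus_gen}.
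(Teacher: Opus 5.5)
Your proposal is essentially the paper's proof. The paper uses the same joint feature map $\Psi^{\mathrm{aff}}(\xi,x)=\bigl((\mathcal{L}_{U_\star}^\top x)\xi^\top,\ c_0^\top x\bigr)\in\RR^{\dstar p+1}$ with weight $(B,b)$ (your version differs only by the sign flip for argmax), and then simply invokes the multiclass-linear Natarajan bound \citet[Theorem~29.7]{shalev-shwartz2014understanding}, which is your fallback route.

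If you want the self-contained version, drop the interior-perturbation idea, since that region can be lower-dimensional: a target label may be selectable only through exact ties, e.g.\ when two labels later in the order score $\langle\Theta,v\rangle$ and $-\langle\Theta,v\rangle$ relative to it. Use the fixed label order instead. For each $i$, whichever of $g_1(\xi_i),g_2(\xi_i)$ loses ties must win strictly whenever it is selected. So after flipping the sign of $z_i$ where needed, the choice between the two labels at $\xi_i$ is exactly $\mathbf{1}\{\langle\Theta,z_i\rangle\ge 0\}$. Half-open homogeneous halfspaces in $\RR^{\dstar p+1}$ still have VC dimension $\dstar p+1$ by the usual linear-dependence argument.
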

\begin{proof}
For any $x\in\X^\angle$,
\[
f_{B,b}(\xi)^\top x
=b c_0^\top x + \bigl(\mathcal{L}_{U_\star} B \xi\bigr)^\top x
= bc_0^\top x + \left\langle \vec(B),\ \vec\! \left((\mathcal{L}_{U_\star}^\top x)\xi^\top \right)\right\rangle.
\]
Define the feature map
\[
\Psi^{\mathrm{aff}}(\xi,x)
:=\begin{bmatrix}
\vec\!\left((\mathcal{L}_{U_\star}^\top x) \xi^\top\right)\\[1mm]
c_0^\top x
\end{bmatrix}
\in\RR^{\dstar p+1},
\qquad
w_{B,b}
:=\begin{bmatrix}
\vec(B)\\
b
\end{bmatrix}\in\RR^{\dstar p+1}.
\]
Then $\mathcal{F}^{\mathrm{aff}}_{U_\star,\dstar}$ is a subset of the multiclass linear hypothesis class
$$\mathcal{H}_{\Psi^{\mathrm{aff}}}:=\{\xi\mapsto \argmin_{x\in\X^\angle}\langle w,\Psi^{\mathrm{aff}}(\xi,x)\rangle:\ w\in\RR^{\dstar p+1}\},$$
because each $f_{B,b}$ induces the decision rule
$x^\star(f_{B,b}(\xi))=\argmin_{x\in\X^\angle}\langle w_{B,b},\Psi^{\mathrm{aff}}(\xi,x)\rangle$.
Finally, by \citet[Theorem~29.7]{shalev-shwartz2014understanding}, the Natarajan dimension of $\mathcal{H}_{\Psi^{\mathrm{aff}}}$ is at most $\dstar p+1$,
and the same bound holds for its subset $\mathcal{F}^{\mathrm{aff}}_{U_\star,\dstar}$.
\end{proof}

\paragraph{Step 2: Uniform SPO generalization in the compressed class.}
We now combine Lemma~\ref{lem:natarajan_tp} with the SPO generalization bound of \citet{elbalghiti2023generalization}.
This yields a uniform convergence guarantee over $\mathcal H_{U_\star,\dstar}$ with leading complexity term scaling as $\dstar p$.

\begin{lemma}
\label{lem:spo_gen_compressed}
For any $\delta\in(0,1)$, with probability at least $1-\delta$ over an i.i.d.\ sample
$S=\{(\xi_i,c_i)\}_{i=1}^n$, we have
\[
R_{\mathrm{SPO}}(f)
\le
\hat R_{\mathrm{SPO}}(f)
+2\,\omega_{\X}(\C)\,\sqrt{\frac{2(\dstar p+1)\log(n|\X^\angle|^2)}{n}}
+\omega_{\X}(\C)\,\sqrt{\frac{\log(1/\delta)}{2n}},
\]
simultaneously for all $f\in\mathcal{H}_{U_\star,\dstar}$, where
$\omega_{\X}(\C):=\sup_{c\in\C}\left(\max_{x\in\X} c^\top x-\min_{x\in\X} c^\top x\right)$.
\end{lemma}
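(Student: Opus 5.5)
The plan is to invoke the Natarajan-dimension generalization theorem for the SPO loss of \citet{elbalghiti2023generalization} as a black box, with the dimension bound supplied by Lemma~\ref{lem:natarajan_tp}. That theorem applies to a hypothesis class $\mathcal H$ of cost predictors whose induced decision class $x^\star\circ\mathcal H$ (labels in the finite set $\X^\angle$) has Natarajan dimension $d_N$. In the form we need, it states that with probability at least $1-\delta$, uniformly over $f\in\mathcal H$,
\[
R_{\mathrm{SPO}}(f)\le \hat R_{\mathrm{SPO}}(f)+2\,\omega\sqrt{\tfrac{2 d_N\log(n|\X^\angle|^2)}{n}}+\omega\sqrt{\tfrac{\log(1/\delta)}{2n}},
\]
where $\omega$ bounds the range of the SPO loss.

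The first step is to check that the loss is bounded by $\omega_{\X}(\C)$. For $c\in\C$ a.s. and any prediction $\hat c$, we have $x^\star(\hat c)\in\X$. Hence
\[
0\le \ell_{\mathrm{SPO}}(\hat c,c)=c^\top x^\star(\hat c)-\min_{x\in\X}c^\top x\le \max_{x\in\X}c^\top x-\min_{x\in\X}c^\top x\le \omega_{\X}(\C).
\]
This holds regardless of whether $f(\xi)\in\C$, so the bound applies to the whole class $\mathcal H_{U_\star,\dstar}$ and not only to its $\C$-valued subclass.

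The second step is the complexity bound. The class $\mathcal H_{U_\star,\dstar}$ is exactly the subclass of $\mathcal H^{\mathrm{aff}}_{U_\star,\dstar}$ with $b=1$. The induced decision class is therefore a subset of $\mathcal F^{\mathrm{aff}}_{U_\star,\dstar}$, and Natarajan dimension is monotone under taking subclasses, so Lemma~\ref{lem:natarajan_tp} gives $d_N\le \dstar p+1$. Plugging $d_N\le\dstar p+1$ and $\omega=\omega_{\X}(\C)$ into the theorem yields the displayed inequality.

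The main obstacle is matching the exact hypotheses and constants of the cited theorem. \citet{elbalghiti2023generalization} derive their bound by combining a Rademacher complexity bound with a Natarajan-lemma counting argument over the finitely many labels in $\X^\angle$. Their statement may be phrased in terms of the class of decision maps, and with loss normalized via $\omega$. I would verify two points. First, the SPO loss depends on $f$ only through the decision $x^\star(f(\xi))$, so the loss class is indexed by $\mathcal F$ and the bound is genuinely uniform over $f$. Second, the tie-breaking oracle $x^\star$ is deterministic, so the argmin over $\X^\angle$ in Lemma~\ref{lem:natarajan_tp} coincides with $x^\star$ (possibly after adjusting the tie-breaking inside the linear multiclass class, which only shrinks the class). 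Once these points are confirmed, the constants $2$, $\sqrt{2}$ and $\log(n|\X^\angle|^2)$ transfer directly, and no further argument is needed.
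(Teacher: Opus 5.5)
Your proposal is correct and follows the paper's proof essentially verbatim: you apply the Natarajan-dimension SPO bound of \citet{elbalghiti2023generalization} with loss range $\omega_{\X}(\C)$, and you bound $d_N(x^\star\circ\mathcal H_{U_\star,\dstar})\le \dstar p+1$ via the inclusion $\mathcal H_{U_\star,\dstar}\subseteq\mathcal H^{\mathrm{aff}}_{U_\star,\dstar}$ (taking $b=1$) together with Lemma~\ref{lem:natarajan_tp}. Your extra checks are sensible details that the paper leaves implicit: the range bound needs only $c\in\C$ and not $f(\xi)\in\C$, and the tie-breaking must be consistent.
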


\begin{proof}
This follows from the Natarajan-dimension generalization bound for SPO loss in \citet{elbalghiti2023generalization}.
The SPO loss is uniformly bounded by $\omega_{\X}(\C)$ when $c\in\C$.
Using Lemma~\ref{lem:natarajan_tp} and the inclusion
$\mathcal{H}_{U_\star,\dstar}\subseteq \mathcal{H}^{\mathrm{aff}}_{U_\star,\dstar}$ (take $b=1$),
we have $d_N(x^\star\circ \mathcal{H}_{U_\star,\dstar})\le d_N(\mathcal{F}^{\mathrm{aff}}_{U_\star,\dstar})\le \dstar p+1$,
which yields the stated bound.
\end{proof}

\paragraph{Step 3: Global sufficiency implies lossless compression.}
To prove the first statement in Theorem~\ref{thm:misspec_plus_gen}, we show that if $\Dset$ is globally sufficient on $\C$,
then compressing any cost vector to $W_\star$ and lifting it back to $\C$ leaves the oracle decision unchanged.

\begin{lemma}
\label{lem:global_sufficiency_lossless}
Assume $\Dset$ is globally sufficient on $\C$, $W_\star:=\spanop(\Dset)$ and let $U_\star\in\RR^{d\times \dstar}$ be an orthonormal basis of $W_\star$.
Recall that $\mathcal{L}_{U_\star}:=\Sigma U_\star(U_\star^\top\Sigma U_\star)^{-1}$ and $\operatorname{lift}_{U_\star}(s):=c_0+\mathcal{L}_{U_\star} s$.

Fix a deterministic tie-breaking rule so that $x^\star(\cdot)$ is single-valued.
Then for any $\hat c\in\C$, letting
\[
\tilde c := \operatorname{lift}_{U_\star}\!\bigl(U_\star^\top(\hat c-c_0)\bigr),
\]
we have $x^\star(\hat c)=x^\star(\tilde c)$.

Consequently, for any predictor $f:\Xi\to\C$, the compressed predictor
$\hat f(\xi):=\operatorname{lift}_{U_\star}\!\bigl(U_\star^\top(f(\xi)-c_0)\bigr)$
induces the same decisions and satisfies
$\ell_{\mathrm{SPO}}(f(\xi),c)=\ell_{\mathrm{SPO}}(\hat f(\xi),c)$ for all $c\in\C$.
\end{lemma}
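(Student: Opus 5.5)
The plan is to show that $\hat c$ and $\tilde c$ lie in the same fiber of the globally sufficient dataset $\Dset$, so the decoder from \Cref{def:bennouna-sdd} assigns them the same optimal set.

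First I will record the measurements of $\tilde c$. Since $\spanop(\Dset)=W_\star=\operatorname{range}(U_\star)$, every $q\in\Dset$ can be written as $q=U_\star a_q$ for some $a_q\in\RR^{\dstar}$. By \Cref{lem:lifting_main}, $U_\star^\top(\tilde c-c_0)=U_\star^\top(\hat c-c_0)$, and therefore
\[
q^\top\tilde c \;=\; q^\top c_0 + a_q^\top U_\star^\top(\tilde c-c_0) \;=\; q^\top\hat c
\qquad\text{for all } q\in\Dset .
\]
Hence $s(\tilde c;\Dset)=s(\hat c;\Dset)$.

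Next I will check that $\tilde c\in\C$. Set $s:=U_\star^\top(\hat c-c_0)$; this lies in $U_\star^\top(\C-c_0)$ because $\hat c\in\C$. The membership then follows from the last clause of \Cref{lem:lifting_main}. Concretely, $\operatorname{lift}_{U_\star}(s)$ minimizes the ellipsoidal norm over the affine slice $\{c:\ U_\star^\top(c-c_0)=s\}$, and $\hat c$ is a feasible point of that slice with ellipsoidal norm at most $1$. So both $\hat c$ and $\tilde c$ belong to the fiber $\C(\Dset,s(\hat c;\Dset))$.

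By global sufficiency there is a decoder $\widehat X$ with $\widehat X(s(c;\Dset))=\X^\star(c)$ for every $c\in\C$. Applying this at $\hat c$ and at $\tilde c$ gives $\X^\star(\hat c)=\X^\star(\tilde c)$. The oracle $x^\star(\cdot)$ uses a fixed deterministic tie-breaking rule (e.g.\ lexicographic over $\X^\angle$), so it returns a value depending only on the set $\X^\star(\cdot)\cap\X^\angle$. Equal argmin sets therefore give $x^\star(\hat c)=x^\star(\tilde c)$.

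For the consequence, I apply this pointwise with $\hat c=f(\xi)\in\C$. The SPO loss $\ell_{\mathrm{SPO}}(\hat c,c)$ depends on $\hat c$ only through $x^\star(\hat c)$, so $\ell_{\mathrm{SPO}}(f(\xi),c)=\ell_{\mathrm{SPO}}(\hat f(\xi),c)$.

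The main subtlety is the tie-breaking step: I need the oracle to be a function of the argmin set rather than of the cost vector itself. I will state this explicitly as the convention being used. The membership $\tilde c\in\C$ is the only other point needing care, and the minimum-norm argument above handles it.
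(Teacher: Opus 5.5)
Your proposal is correct and follows essentially the same route as the paper: you show $U_\star^\top(\tilde c-c_0)=U_\star^\top(\hat c-c_0)$, so $\tilde c$ and $\hat c$ give the same measurements on $\Dset\subseteq\spanop(U_\star)$; you use \Cref{lem:lifting_main} to get $\tilde c\in\C$; and you conclude via the global-sufficiency decoder together with the tie-breaking rule. Your two added points are explicit versions of things the paper leaves implicit: the oracle's tie-breaking must depend only on the argmin set, and the minimum-norm argument is why $\tilde c\in\C$.
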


\begin{proof}
Fix $\hat c\in\C$ and set $s:=U_\star^\top(\hat c-c_0)$ and $\tilde c:=\operatorname{lift}_{U_\star}(s)=c_0+\mathcal{L}_{U_\star} s$.
By the definition of $\mathcal{L}_{U_\star}$,
\[
U_\star^\top(\tilde c-c_0)
=U_\star^\top\mathcal{L}_{U_\star} s
=U_\star^\top\Sigma U_\star\,(U_\star^\top\Sigma U_\star)^{-1}s
=s
=U_\star^\top(\hat c-c_0).
\tag{$\star$}
\]
Now take any $q\in\Dset\subseteq W_\star=\spanop(U_\star)$. Write $q=U_\star a$ for some $a\in\RR^{\dstar}$. Then using $(\star)$,
\[
q^\top\tilde c-q^\top\hat c
= a^\top U_\star^\top(\tilde c-\hat c)
= a^\top\Bigl(U_\star^\top(\tilde c-c_0)-U_\star^\top(\hat c-c_0)\Bigr)=0.
\]
Hence $s(\tilde c;\Dset)=s(\hat c;\Dset)$.
Moreover, since $\hat c\in\C$ we have $s\in U_\star^\top(\C-c_0)$ and thus Lemma~\ref{lem:lifting_main} implies $\tilde c\in\C$.
Global sufficiency of $\Dset$ on $\C$ then yields
$\argmin_{x\in\X}\hat c^\top x=\argmin_{x\in\X}\tilde c^\top x$,
and the deterministic tie-breaking rule gives $x^\star(\hat c)=x^\star(\tilde c)$.

The predictor claim follows by applying the above argument pointwise to $\hat c=f(\xi)\in\C$.
\end{proof}

\smallskip
With Lemmas~\ref{lem:spo_gen_compressed} and \ref{lem:global_sufficiency_lossless} in hand, we can now complete the proof of Theorem~\ref{thm:misspec_plus_gen}.

\begin{proof}[Proof of Theorem~\ref{thm:misspec_plus_gen}]

Part~(1) follows from Lemma~\ref{lem:global_sufficiency_lossless} by taking $f=f_\star\in\mathcal{H}(\C)$:
the compressed predictor $\hat f_\star(\xi)=\operatorname{lift}_{U_\star}(U_\star^\top(f_\star(\xi)-c_0))$ induces the same decisions as $f_\star$,
hence has the same population SPO risk, and is a risk minimizer in $\mathcal{H}_{U_\star,\dstar}(\C)$.

For part~(2), apply Lemma~\ref{lem:spo_gen_compressed}, which gives a uniform generalization bound over the larger class $\mathcal{H}_{U_\star,\dstar}$
(and therefore also over its subset $\mathcal{H}_{U_\star,\dstar}(\C)$).
\end{proof}

\subsection{A concrete bound for ordinary least squares}
\label{app:bound_ols}
Suppose $\|\xi\|_2\le 1$ almost surely and the population design covariance satisfies
$\Sigma_\xi:=\EE[\xi\xi^\top]\succeq \kappa I_p$ for some $\kappa>0$. More generally, if $\|\xi\|_2\le C_{\xi}$ almost surely, the same proof yields the same bound up to an additional multiplicative factor $C_{\xi}$.

Assume the regression noise $\epsilon:=c-\mu(\xi)$ is conditionally mean-zero and $\sigma$-subgaussian
in every direction, i.e., for all $\lambda\in\RR$ and all $u\in\RR^d$ with $\|u\|_2=1$,
\[
\EE\!\left[\exp\!\bigl(\lambda\,u^\top \epsilon\bigr)\mid \xi\right]\ \le\ \exp(\lambda^2\sigma^2/2).
\]
Under the centered linear model \eqref{eq:centered_condmean_model}, let $\hat A_\mu$ be the (multi-response) OLS estimator based on $n_\mu$ i.i.d.\ samples
$\{(\xi_i,c_i)\}_{i=1}^{n_\mu}$ by regressing $y_i:=c_i-c_0$ on $\xi_i$, and define
\[
\hat\mu(\xi):=c_0+\hat A_\mu \xi,
\qquad
\varepsilon_\mu^2:=\EE_{\xi}\bigl[\|\hat\mu(\xi)-\mu(\xi)\|_2^2\bigr]
=\EE_\xi\bigl[\|(\hat A_\mu-A_\mu)\xi\|_2^2\bigr].
\]

\begin{lemma}
\label{lem:ols_bound_eps_mu}
Fix $\delta_\mu\in(0,1)$ and assume
\[
n_\mu\ \ge\ \frac{8}{\kappa}\log\!\frac{2p}{\delta_\mu}.
\]
Then, with probability at least $1-\delta_\mu$ over the regression sample, the OLS estimator is well-defined and
\begin{equation}
\label{eq:ols_bound_A_mu_fro}
\|\hat A_\mu-A_\mu\|_F
\ \le\
C_{\mathrm{reg}}\cdot\frac{\sigma}{\sqrt{\kappa}}
\sqrt{\frac{d\Bigl(p+\log\!\frac{4d}{\delta_\mu}\Bigr)}{n_\mu}},
\qquad\text{where one may take } C_{\mathrm{reg}}=4\sqrt{2}.
\end{equation}
Consequently,
\begin{equation}
\label{eq:ols_bound_mu_sup}
\sup_{\|\xi\|_2\le 1}\|\hat\mu(\xi)-\mu(\xi)\|_2
\ \le\
C_{\mathrm{reg}}\cdot\frac{\sigma}{\sqrt{\kappa}}
\sqrt{\frac{d\Bigl(p+\log\!\frac{4d}{\delta_\mu}\Bigr)}{n_\mu}},
\end{equation}
and, in particular,
\begin{equation}
\label{eq:ols_bound_eps_mu}
\varepsilon_{\mu}
\ \le\
C_{\mathrm{reg}}\cdot\frac{\sigma}{\sqrt{\kappa}}
\sqrt{\frac{d\Bigl(p+\log\!\frac{4d}{\delta_\mu}\Bigr)}{n_\mu}}.
\end{equation}
\end{lemma}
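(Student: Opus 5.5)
We prove the Frobenius bound \eqref{eq:ols_bound_A_mu_fro} first; the other two displays then follow at once. The starting point is the OLS error representation. Write $\Xi_n\in\RR^{n_\mu\times p}$ for the design matrix with rows $\xi_i^\top$, and $E\in\RR^{n_\mu\times d}$ for the noise matrix with rows $\epsilon_i^\top$. Let $\hat\Sigma:=\frac1{n_\mu}\Xi_n^\top\Xi_n$. Whenever $\hat\Sigma$ is invertible,
\[
(\hat A_\mu-A_\mu)^\top=(\Xi_n^\top\Xi_n)^{-1}\Xi_n^\top E=\hat\Sigma^{-1}\cdot\tfrac{1}{n_\mu}\Xi_n^\top E .
\]
The argument splits the failure probability $\delta_\mu$ into two halves: one for the design event and one for the noise event.

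\emph{Step 1 (design).} Since $\|\xi_i\|_2\le1$, the summands $\xi_i\xi_i^\top/n_\mu$ are PSD with operator norm at most $1/n_\mu$, and their sum has mean $\Sigma_\xi\succeq\kappa I_p$. The lower-tail matrix Chernoff bound gives
\[
\PP\bigl[\lambda_{\min}(\hat\Sigma)\le \tfrac{\kappa}{2}\bigr]\le p\,e^{-n_\mu\kappa/8}.
\]
Under the sample-size condition $n_\mu\ge\frac8\kappa\log\frac{2p}{\delta_\mu}$, this probability is at most $\delta_\mu/2$. On the complementary event, OLS is well-defined and $\|\hat\Sigma^{-1/2}\|_{op}\le\sqrt{2/\kappa}$.

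\emph{Step 2 (noise, conditional on the design).} Condition on $\Xi_n$ and work column by column. For each output coordinate $k\in[d]$, let $E_{:k}$ be the $k$th column of $E$; its entries are independent and $\sigma$-subgaussian. Let $\Pi$ be the orthogonal projector onto the column span of $\Xi_n$, which has rank at most $p$. The $k$th column of the error then satisfies
\[
\|(\hat A_\mu-A_\mu)_{k,:}\|_2=\|(\Xi_n^\top\Xi_n)^{-1}\Xi_n^\top E_{:k}\|_2\le\frac{\|\Pi E_{:k}\|_2}{\sqrt{n_\mu\lambda_{\min}(\hat\Sigma)}}.
\]
The Hanson--Wright inequality, or equivalently an $\varepsilon$-net argument over the unit sphere of the span, gives
\[
\|\Pi E_{:k}\|_2^2\le C\sigma^2\bigl(p+\log\tfrac{4d}{\delta_\mu}\bigr)
\]
with probability at least $1-\delta_\mu/(2d)$. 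A union bound over the $d$ columns makes this hold for all $k$ simultaneously with probability at least $1-\delta_\mu/2$; this is where the $\log(4d/\delta_\mu)$ term comes from. Summing the squares over $k$ and inserting $\lambda_{\min}(\hat\Sigma)\ge\kappa/2$ yields
\[
\|\hat A_\mu-A_\mu\|_F^2\le \frac{2C\sigma^2 d\,(p+\log\frac{4d}{\delta_\mu})}{\kappa n_\mu}.
\]
Tracking the constants, for example a $1/2$-net with the factor $4$ coming from the net and $2$ from the eigenvalue bound, gives $C_{\mathrm{reg}}=4\sqrt2$. Since the conditional bound holds for every fixed design, it also holds unconditionally. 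A final union bound with Step 1 gives total probability $1-\delta_\mu$.

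\emph{Step 3 (consequences).} For $\|\xi\|_2\le1$,
\[
\|\hat\mu(\xi)-\mu(\xi)\|_2=\|(\hat A_\mu-A_\mu)\xi\|_2\le\|\hat A_\mu-A_\mu\|_{op}\le\|\hat A_\mu-A_\mu\|_F,
\]
which gives \eqref{eq:ols_bound_mu_sup}. Because $\|\xi\|_2\le1$ almost surely, $\varepsilon_\mu$ is bounded by the same supremum, which gives \eqref{eq:ols_bound_eps_mu}.

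The main obstacle is Step 2. The subgaussian projection tail has to hold at a fixed constant so that it produces exactly $C_{\mathrm{reg}}=4\sqrt2$, and the correlation between the design and the noise has to be handled correctly. Conditioning on the design, which is valid because $\EE[\epsilon\mid\xi]=0$ and the subgaussian bound is conditional on $\xi$, together with independence across samples, settles the correlation issue. If the constant does not come out exactly, the statement with a universal $C_{\mathrm{reg}}$ still holds.
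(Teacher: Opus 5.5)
Your proposal is correct and follows essentially the same route as the paper. It uses matrix Chernoff to get $\lambda_{\min}(\hat\Sigma)\ge\kappa/2$ with failure $\delta_\mu/2$, then a conditional-on-design $1/2$-net bound on the self-normalized noise for each of the $d$ output coordinates with failure $\delta_\mu/(2d)$ each, giving $\|\hat\beta_j-\beta_j\|_2\le 4\sqrt2\,\sigma\sqrt{(p+\log(4d/\delta_\mu))/(\kappa n_\mu)}$; summing over coordinates and using $\|\cdot\|_{op}\le\|\cdot\|_F$ finishes the proof. Your projector quantity $\|\Pi E_{:k}\|_2$ equals the paper's $\|g_j\|_2=\|(X^\top X)^{-1/2}X^\top\epsilon^{(j)}\|_2$, and it is the net argument (not Hanson--Wright, whose universal constants are unspecified) that delivers the stated $C_{\mathrm{reg}}=4\sqrt2$.
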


\begin{proof}
Write the regression sample in centered form $y_i=c_i-c_0=A_\mu\xi_i+\epsilon_i$, where
$\EE[\epsilon_i\mid \xi_i]=0$ and $\epsilon_i$ is $\sigma$-subgaussian in every direction.
Let
\[
\hat\Sigma\ :=\ \frac1{n_\mu}\sum_{i=1}^{n_\mu}\xi_i\xi_i^\top
\quad\text{and}\quad
X\in\RR^{n_\mu\times p}\ \text{be the design matrix with rows }\xi_i^\top.
\]
Then $X^\top X = n_\mu \hat\Sigma$.

This proof follows a standard non-asymptotic OLS argument: we first lower bound the minimum eigenvalue of the empirical Gram matrix via a matrix Chernoff bound \citep{tropp2012userfriendly}, and then control the self-normalized noise term $\|(X^\top X)^{-1/2}X^\top \epsilon^{(j)}\|_2$ using sub-Gaussian concentration together with an $\varepsilon$-net (sphere covering) argument \citep{vershynin2018hdp}.

\emph{Step 1: a lower bound on $\lambda_{\min}(\hat\Sigma)$.}
Set $Y_i:=\xi_i\xi_i^\top$, so each $Y_i\succeq 0$ and $\lambda_{\max}(Y_i)=\|\xi_i\|_2^2\le 1$ a.s.
Moreover,
\[
\EE[Y_i]\ =\ \EE[\xi\xi^\top]\ =\ \Sigma_\xi\ \succeq\ \kappa I_p.
\]
Applying the matrix Chernoff bound \citep[Theorem~1.1]{tropp2012userfriendly} with $\delta=1/2$ yields
\[
\PP\!\left(\lambda_{\min}\!\left(\sum_{i=1}^{n_\mu}Y_i\right)\le \frac12\,\lambda_{\min}\!\left(\sum_{i=1}^{n_\mu}\EE Y_i\right)\right)
\ \le\
p\left[\frac{e^{-1/2}}{(1/2)^{1/2}}\right]^{n_\mu\kappa}
\ \le\ p\,e^{-n_\mu\kappa/8}.
\]
Under the assumed sample size condition, $p\,e^{-n_\mu\kappa/8}\le \delta_\mu/2$; hence with probability at least
$1-\delta_\mu/2$,
\[
\lambda_{\min}(X^\top X)=n_\mu \lambda_{\min}(\hat\Sigma)\ \ge\ \frac{n_\mu\kappa}{2}.
\]
In particular, $X^\top X$ is invertible, and OLS is well-defined on this event.

\emph{Step 2: row-wise OLS coefficient error.}
Let $\beta_j^\top$ denote the $j$-th row of $A_\mu$ and $\hat\beta_j^\top$ the $j$-th row of $\hat A_\mu$.
Then the scalar response model for coordinate $j$ is
$y_{i,j}=\beta_j^\top \xi_i+\epsilon_{i,j}$, and the OLS error satisfies
\[
\hat\beta_j-\beta_j\ =\ (X^\top X)^{-1}X^\top \epsilon^{(j)},
\]
where $\epsilon^{(j)}:=(\epsilon_{1,j},\dots,\epsilon_{n_\mu,j})\in\RR^{n_\mu}$.

Define the normalized noise vector
\[
g_j\ :=\ (X^\top X)^{-1/2}X^\top \epsilon^{(j)}\in\RR^p.
\]
Conditioned on $X$, for any $u\in\RR^p$ with $\|u\|_2=1$,
\[
u^\top g_j
= u^\top (X^\top X)^{-1/2}X^\top \epsilon^{(j)}
= a^\top \epsilon^{(j)},
\quad\text{where } a:=X(X^\top X)^{-1/2}u\in\RR^{n_\mu}.
\]
Note $\|a\|_2^2 = u^\top (X^\top X)^{-1/2}X^\top X (X^\top X)^{-1/2}u = \|u\|_2^2=1$.
Since $\{\epsilon_{i,j}\}_{i=1}^{n_\mu}$ are independent and each is $\sigma$-subgaussian, we have for all $\lambda\in\RR$,
\[
\EE\!\left[\exp\!\bigl(\lambda\,u^\top g_j\bigr)\mid X\right]
= \prod_{i=1}^{n_\mu}\EE\!\left[\exp(\lambda a_i \epsilon_{i,j})\mid X\right]
\le \prod_{i=1}^{n_\mu}\exp(\lambda^2\sigma^2 a_i^2/2)
= \exp(\lambda^2\sigma^2/2).
\]
Thus, conditional on $X$, $u^\top g_j$ is $\sigma$-subgaussian for every unit vector $u$.

Let $\mathcal N$ be a $1/2$-net of the Euclidean unit sphere in $\RR^p$ with $|\mathcal N|\le 5^p$ \citep{vershynin2018hdp}. For any fixed $u\in\mathcal N$ and any $t>0$,
subgaussian tails imply
$\PP\bigl(|u^\top g_j|\ge \sigma\sqrt{2t}\mid X\bigr)\le 2e^{-t}$.
Taking a union bound over $\mathcal N$ and choosing
$t:=p\log 5 + \log(2/\delta_j)$ gives
\[
\PP\!\left(\max_{u\in\mathcal N}|u^\top g_j|\ge \sigma\sqrt{2t}\ \bigm|\ X\right)
\le |\mathcal N|\,2e^{-t}\ \le\ \delta_j.
\]
On the complementary event, the standard net argument yields $\|g_j\|_2\le 2\max_{u\in\mathcal N}|u^\top g_j|$,
so with conditional probability at least $1-\delta_j$,
\[
\|g_j\|_2\ \le\ 2\sigma\sqrt{2t}
=2\sigma\sqrt{2\Bigl(p\log 5+\log(2/\delta_j)\Bigr)}
\le 4\sigma\sqrt{p+\log(2/\delta_j)},
\]
where we used $\log 5\le 2$ and $\log(2/\delta_j)\ge 0$.

Set $\delta_j:=\delta_\mu/(2d)$, so $\log(2/\delta_j)=\log(4d/\delta_\mu)$.
Then, with probability at least $1-\delta_\mu/2$ over the noise (and conditional on $X$),
the above bound holds simultaneously for all $j=1,\dots,d$ by a union bound.

\emph{Step 3: combine and translate to prediction error.}
On the intersection of the events from Steps 1 and 2 (which has probability at least $1-\delta_\mu$),
for each $j$,
\[
\|\hat\beta_j-\beta_j\|_2
= \|(X^\top X)^{-1/2} g_j\|_2
\le \frac{\|g_j\|_2}{\sqrt{\lambda_{\min}(X^\top X)}}
\le \frac{4\sigma\sqrt{p+\log(4d/\delta_\mu)}}{\sqrt{n_\mu\kappa/2}}
= \frac{4\sqrt{2}\,\sigma}{\sqrt{\kappa}}
\sqrt{\frac{p+\log(4d/\delta_\mu)}{n_\mu}}.
\]
Therefore,
\[
\|\hat A_\mu-A_\mu\|_F^2=\sum_{j=1}^d \|\hat\beta_j-\beta_j\|_2^2
\le
\frac{32\,\sigma^2}{\kappa}\cdot \frac{d\bigl(p+\log(4d/\delta_\mu)\bigr)}{n_\mu}.
\]
Taking square roots yields \eqref{eq:ols_bound_A_mu_fro}. Moreover, for every $\xi$ with $\|\xi\|_2\le 1$,
\[
\|\hat\mu(\xi)-\mu(\xi)\|_2
=\|(\hat A_\mu-A_\mu)\xi\|_2
\le \|\hat A_\mu-A_\mu\|_F\,\|\xi\|_2
\le \|\hat A_\mu-A_\mu\|_F,
\]
which implies \eqref{eq:ols_bound_mu_sup}. Finally, since $\|\xi\|_2\le 1$ a.s.,
\[
\varepsilon_\mu^2
=\EE_{\xi}\bigl[\|(\hat A_\mu-A_\mu)\xi\|_2^2\bigr]
\le \|\hat A_\mu-A_\mu\|_F^2,
\]
which implies \eqref{eq:ols_bound_eps_mu}.
\end{proof}

\subsection{Stage-I representation error bound (Proof of Theorem~\ref{thm:misspec_plus_gen_condmean})}
\label{app:clo_condmean_stage1}

We first relate the regression error of $\hat\mu$ to the probability that the induced plug-in decision disagrees with the Bayes rule.
Fix $\eta>0$.  On the event that $\mu(\xi)$ lies at distance $>\eta$ from the cone boundary $\mathcal{B}_{\X}$, the optimal extreme point is constant
throughout the ball $\mathbb{B}(\mu(\xi),\eta)$.  Hence, if the regression estimate is $\eta$-accurate and the learned dataset $\hat\Dset$ is pointwise sufficient at
$\hat\mu(\xi)$, then the lifted predictor $\tilde\mu(\xi)$ induces the same unique decision as $\mu(\xi)$.
The lemma below formalizes this decomposition.

\begin{lemma}
\label{lem:regression_to_sufficiency_margin}
Assume $c\in\C$ almost surely, and let $\tilde\mu$ be defined in \eqref{eq:tilde_mu_def}.
Under Assumption~\ref{assump:cone_boundary_margin}, define for $\eta>0$
\[
\tau_\mu(\eta)
\ :=\
\PP_{\xi}\bigl[\|\hat\mu(\xi)-\mu(\xi)\|_2>\eta\bigr].
\]
Then, for any $\eta>0$,
\begin{equation}
\label{eq:reg_to_sufficiency_margin_tail}
\PP_{\xi}\bigl[x^\star(\tilde\mu(\xi))\neq x^\star(\mu(\xi))\bigr]
\ \le\
\PP_{\xi}\bigl[\hat\Dset\ \text{is not pointwise sufficient at }\hat\mu(\xi)\bigr]
+\tau_\mu(\eta)
+C_{\mathrm{marg}}\,\eta^{\alpha}.
\end{equation}
\end{lemma}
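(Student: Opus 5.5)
We argue by inclusion of events: whenever the three ``good'' events hold, the decisions coincide. Define
\[
E_1:=\{\hat\Dset\ \text{is pointwise sufficient at }\hat\mu(\xi)\},\quad
E_2:=\{\|\hat\mu(\xi)-\mu(\xi)\|_2\le\eta\},\quad
E_3:=\{\dist(\mu(\xi),\mathcal B_{\X})>\eta\}.
\]
We will show that on $E_1\cap E_2\cap E_3$, intersected with the almost-sure events of Assumption~\ref{assump:cone_boundary_margin}, we have $x^\star(\tilde\mu(\xi))=x^\star(\mu(\xi))$. A union bound then gives \eqref{eq:reg_to_sufficiency_margin_tail}, since $\PP[E_2^c]=\tau_\mu(\eta)$ and $\PP[E_3^c]\le C_{\mathrm{marg}}\eta^\alpha$ by the margin condition.

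\textbf{Step 1: $\hat\mu(\xi)$ shares $\mu(\xi)$'s unique optimal vertex.} On $E_3$, the closed ball $\mathbb B(\mu(\xi),\eta)$ does not meet $\mathcal B_{\X}$. Every cost in this ball therefore has a unique optimal extreme point. This extreme point is locally constant on the ball, because the map to the unique optimizer is continuous off $\mathcal B_{\X}$ and takes values in the finite set $\X^\angle$. Since the ball is connected, the optimal extreme point is constant on it. On $E_2$ the point $\hat\mu(\xi)$ lies in the ball, so $\X^\star(\hat\mu(\xi))\cap\X^\angle=\{x^\star(\mu(\xi))\}$.

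\textbf{Step 2: $\tilde\mu(\xi)$ lies in the fiber of $\hat\mu(\xi)$.} This part mirrors Lemma~\ref{lem:global_sufficiency_lossless}. With $s=\hat U^\top(\hat\mu(\xi)-c_0)$, the identity $\hat U^\top\mathcal L_{\hat U}=I$ gives $\hat U^\top(\tilde\mu(\xi)-\hat\mu(\xi))=0$. Hence $q^\top\tilde\mu(\xi)=q^\top\hat\mu(\xi)$ for every $q\in\hat\Dset\subseteq\spanop(\hat U)$. Since $\hat\mu(\xi)\in\C$ a.s.\ by Assumption~\ref{assump:cone_boundary_margin}, Lemma~\ref{lem:lifting_main} gives $\tilde\mu(\xi)\in\C$. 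Together these show $\tilde\mu(\xi)\in\C(\hat\Dset,s(\hat\mu(\xi);\hat\Dset))$.

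\textbf{Step 3: transfer the decision.} On $E_1$ there is a decision $x'$ optimal for every cost in this fiber, including both $\hat\mu(\xi)$ and $\tilde\mu(\xi)$. The sticking point is that $x'$ need not be a vertex. If it is not, we replace it by a vertex. The set of decisions optimal simultaneously for all fiber costs is the intersection of the faces $\X^\star(c')$, so it is a nonempty face of $\X$, and any vertex of it works. Call this vertex $x'$. By Step 1, $x'=x^\star(\mu(\xi))$, because it is an optimal vertex for $\hat\mu(\xi)$ and that optimal vertex is unique.

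Finally, $\tilde\mu(\xi)\notin\mathcal B_{\X}$ a.s.\ by Assumption~\ref{assump:cone_boundary_margin}. So $\tilde\mu(\xi)$ has a unique optimal vertex, which must be $x'$, and the oracle, regardless of its tie-breaking rule, returns $x^\star(\tilde\mu(\xi))=x'=x^\star(\mu(\xi))$. The rest of the argument is bookkeeping with the union bound.
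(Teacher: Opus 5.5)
Your proposal is correct and follows the paper's proof essentially step for step. It uses the same good events, with the almost-sure event $\tilde\mu(\xi)\notin\mathcal{B}_{\X}$ playing the role of the paper's event $D(\xi)$, the same fiber-equivalence argument via $\hat U^\top\mathcal{L}_{\hat U}=I$ and Lemma~\ref{lem:lifting_main}, the same uniqueness transfer of the decision, and the same union bound.

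The differences are cosmetic. You justify, via local constancy and connectedness, that the $\eta$-ball lies in a single open optimality region; the paper simply asserts this. You also pass to a vertex of the common optimal face, whereas the paper instead uses that $\X^\star(\hat\mu(\xi))$ and $\X^\star(\tilde\mu(\xi))$ are singletons.
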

\begin{proof}
Define the events
\begin{align*}
A(\xi)&:=\{\hat\Dset\ \text{is pointwise sufficient at }\hat\mu(\xi)\},\\
B(\xi)&:=\{\|\hat\mu(\xi)-\mu(\xi)\|_2\le \eta\},\\
C(\xi)&:=\{\dist(\mu(\xi),\mathcal{B}_{\X})>\eta\},\\
D(\xi)&:=\{\tilde\mu(\xi)\notin \mathcal{B}_{\X}\}.
\end{align*}
Note that $\PP_\xi[D(\xi)^c]=0$ by Assumption~\ref{assump:cone_boundary_margin}.

\smallskip
\noindent\textbf{Step 1: $\tilde\mu(\xi)$ is fiber-equivalent to $\hat\mu(\xi)$ under $\hat\Dset$.}
Since $\hat\mu(\xi)\in\C$ a.s.\ and $\hat U$ spans $\hat\Dset$, we have $\hat U^\top(\hat\mu(\xi)-c_0)\in \hat U^\top(\C-c_0)$ and thus Lemma~\ref{lem:lifting_main} implies $\tilde\mu(\xi)\in\C$.
Moreover, using the definition of $\tilde\mu$ and the lifting operator
$\operatorname{lift}_{\hat U}(s)=c_0+\mathcal{L}_{\hat U}s$ (see \eqref{eq:lift_operator_main}), we have
\[
\hat U^\top(\tilde\mu(\xi)-c_0)
=
\hat U^\top\mathcal{L}_{\hat U}\,\hat U^\top(\hat\mu(\xi)-c_0)
=
\bigl(\hat U^\top \Sigma \hat U\bigr)\bigl(\hat U^\top \Sigma \hat U\bigr)^{-1}\hat U^\top(\hat\mu(\xi)-c_0)
=
\hat U^\top(\hat\mu(\xi)-c_0).
\]
Therefore, for any $q\in\hat\Dset\subseteq \spanop(\hat U)$ we can write $q=\hat U a$ for some $a\in\RR^t$, and thus
\[
q^\top\tilde\mu(\xi)-q^\top\hat\mu(\xi)
=
a^\top \hat U^\top\bigl(\tilde\mu(\xi)-\hat\mu(\xi)\bigr)
=
a^\top\Bigl(\hat U^\top(\tilde\mu(\xi)-c_0)-\hat U^\top(\hat\mu(\xi)-c_0)\Bigr)
=
0.
\]
In particular, $\tilde\mu(\xi)$ and $\hat\mu(\xi)$ lie in the same fiber
$\C\!\left(\hat\Dset, s(\hat\mu(\xi);\hat\Dset)\right)$.

\smallskip
\noindent\textbf{Step 2: On $A(\xi)\cap B(\xi)\cap C(\xi)\cap D(\xi)$, the oracle decisions coincide.}
On $A(\xi)$, pointwise sufficiency at $\hat\mu(\xi)$ means that there exists some decision $x^{\mathrm{ps}}(\xi)\in\X$ such that
\[
x^{\mathrm{ps}}(\xi)\in \X^\star(c')\qquad \forall\,c'\in \C\!\left(\hat\Dset, s(\hat\mu(\xi);\hat\Dset)\right).
\]
Since $\tilde\mu(\xi)$ is in the same fiber, we have $x^{\mathrm{ps}}(\xi)\in \X^\star(\tilde\mu(\xi))$ and also
$x^{\mathrm{ps}}(\xi)\in \X^\star(\hat\mu(\xi))$.

Next, on $B(\xi)\cap C(\xi)$ we have $\hat\mu(\xi)\in\mathbb{B}(\mu(\xi),\eta)$ while $\mathbb{B}(\mu(\xi),\eta)$ is contained in the interior
of a single normal cone.  Hence, the optimal extreme point is unique throughout this ball, and in particular
$\X^\star(\hat\mu(\xi))=\{x^\star(\hat\mu(\xi))\}$ and $x^\star(\hat\mu(\xi))=x^\star(\mu(\xi))$.

Finally, on $D(\xi)$ we have $\tilde\mu(\xi)\notin\mathcal{B}_{\X}$, so $\X^\star(\tilde\mu(\xi))=\{x^\star(\tilde\mu(\xi))\}$ is also a singleton.
Since $x^{\mathrm{ps}}(\xi)$ is optimal for both $\hat\mu(\xi)$ and $\tilde\mu(\xi)$, uniqueness forces
$x^\star(\tilde\mu(\xi))=x^{\mathrm{ps}}(\xi)=x^\star(\hat\mu(\xi))=x^\star(\mu(\xi))$.

\smallskip
\noindent\textbf{Step 3: Conclude by a union bound.}
Thus, on $A(\xi)\cap B(\xi)\cap C(\xi)\cap D(\xi)$ we have $x^\star(\tilde\mu(\xi))=x^\star(\mu(\xi))$, and therefore
\[
\PP_{\xi}\bigl[x^\star(\tilde\mu(\xi))\neq x^\star(\mu(\xi))\bigr]
\le
\PP_{\xi}[A(\xi)^c]+\PP_{\xi}[B(\xi)^c]+\PP_{\xi}[C(\xi)^c]+\PP_{\xi}[D(\xi)^c].
\]
The first term is the pointwise-sufficiency failure probability at $\hat\mu(\xi)$.
The second term is exactly $\tau_\mu(\eta)$.
The third term is controlled by Assumption~\ref{assump:cone_boundary_margin}, giving $\PP[C(\xi)^c]\le C_{\mathrm{marg}}\eta^\alpha$.
Finally, $\PP[D(\xi)^c]=0$ by Assumption~\ref{assump:cone_boundary_margin}, which yields \eqref{eq:reg_to_sufficiency_margin_tail}.
\end{proof}

\smallskip
Combining the tail-form transfer bound \eqref{eq:reg_to_sufficiency_margin_tail}, the certificate guarantee of Theorem~\ref{thm:certificate} for Algorithm~\ref{alg:cumulative}, and the bounded-design OLS control of Lemma~\ref{lem:ols_bound_eps_mu} yields the main finite-sample bound on the representation-induced error of Stage~I.

\begin{proof}[Proof of Theorem~\ref{thm:misspec_plus_gen_condmean}]
On the regression event from Lemma~\ref{lem:ols_bound_eps_mu}, equation~\eqref{eq:ols_bound_mu_sup} implies that
\[
\|\hat\mu(\xi)-\mu(\xi)\|_2\ \le\ r_{\mu,\delta_\mu}
\qquad\text{for every }\xi\text{ with }\|\xi\|_2\le 1.
\]
Since $\|\xi\|_2\le 1$ almost surely, this gives
\[
\tau_\mu\bigl(r_{\mu,\delta_\mu}\bigr)
=
\PP_{\xi}\bigl[\|\hat\mu(\xi)-\mu(\xi)\|_2>r_{\mu,\delta_\mu}\bigr]
=0.
\]
Applying \eqref{eq:reg_to_sufficiency_margin_tail} with $\eta=r_{\mu,\delta_\mu}$ therefore yields
\[
\PP_{\xi}\bigl[x^\star(\tilde\mu(\xi))\neq x^\star(\mu(\xi))\bigr]
\ \le\
\PP_{\xi}\bigl[\hat\Dset\ \text{is not pointwise sufficient at }\hat\mu(\xi)\bigr]
+
C_{\mathrm{marg}}\,r_{\mu,\delta_\mu}^{\alpha}.
\]
On the same regression event, Theorem~\ref{thm:certificate} applied to the pseudo-cost sample $\{\hat c_j\}_{j=1}^{n_{\disc}}$ gives, with probability at least $1-\delta$ over the discovery contexts,
\[
\PP_{\xi}\bigl[\hat\Dset\ \text{is not pointwise sufficient at }\hat\mu(\xi)\bigr]
\ \le\
\frac{4}{n_{\disc}}\left(6|T|+\log(e/\delta)\right).
\]
Combining the two displays and taking a union bound over the regression and discovery samples proves \eqref{eq:repr_term_mu}.

For the SPO misspecification bound, note that for any predictor $f$,
\[
R_{\mathrm{SPO}}(f)-R_{\mathrm{SPO}}(f_\star)
=
\EE_{\xi}\!\left[\mu(\xi)^\top x^\star(f(\xi))-\mu(\xi)^\top x^\star(\mu(\xi))\right]\ \ge\ 0,
\]
and for each $\xi$ the bracketed difference is at most $\omega_{\X}(\C)$ and is zero whenever the two decisions coincide.
Hence $R_{\mathrm{SPO}}(f)-R_{\mathrm{SPO}}(f_\star)\le \omega_{\X}(\C)\,\PP_{\xi}[x^\star(f(\xi))\neq x^\star(\mu(\xi))]$.

Apply this to the specific candidate $f=\tilde\mu$.
By \eqref{eq:tilde_mu_in_H} we have $\tilde\mu\in\mathcal{H}_{\hat U,t}$,
so the optimality of $\hat f_\star$ in $\mathcal{H}_{\hat U,t}$ yields
$R_{\mathrm{SPO}}(\hat f_\star)\le R_{\mathrm{SPO}}(\tilde\mu)$.
Combining these facts with \eqref{eq:repr_term_mu} gives \eqref{eq:stage1_misspec_loss}.
\end{proof}

\end{document}